\documentclass[reqno,a4paper]{amsart}
\usepackage[utf8]{inputenc}



\usepackage{amsmath,amssymb}
\usepackage{mathrsfs}
\usepackage{yhmath}

\usepackage{xcolor}

\usepackage{booktabs}

\newtheorem{theorem}{Theorem}
\newtheorem{proposition}{Proposition}
\newtheorem{corollary}{Corollary}
\newtheorem{lemma}{Lemma}

\theoremstyle{remark}
\newtheorem{remark}{Remark}

\newcommand{\de}[0]{\mathrel{\mathop:}=}
\newcommand{\ed}[0]{=\mathrel{\mathop:}}
\newcommand{\ie}[0]{\mathrm{i}}
\newcommand{\dif}[1]{\mathrm{d}#1}
\newcommand{\R}[0]{\mathbb{R}}

\newcommand{\N}[0]{\mathbb{N}}

\DeclareMathOperator{\Cin}{\mathrm{Cin}}

\title[Explicit and unconditional results on gaps between zeroes of $\zeta$]{Some explicit and unconditional results on gaps between zeroes of the Riemann zeta-function}
\author[A.~Simoni\v{c}]{Aleksander Simoni\v{c}}
\address{School of Science, The University of New South Wales (Canberra), ACT, Australia}
\email{a.simonic@student.adfa.edu.au}
\author[T.~Trudgian]{Timothy S.~Trudgian}
\address{School of Science, The University of New South Wales (Canberra), ACT, Australia}
\email{t.trudgian@adfa.edu.au}
\author[C.~L.~Turnage-Butterbaugh]{Caroline L.~Turnage-Butterbaugh}
\address{Carleton College, Northfield, MN, USA}
\email{cturnageb@carleton.edu}
\date{\today}
\thanks{TST is supported by ARC    DP160100932 and FT160100094; CLTB is partially supported by NSF DMS-1901293 and NSF DMS-1854398 FRG}

\definecolor{pink}{rgb}{1,.2,.6}
\definecolor{orange}{rgb}{0.7,0.3,0}
\definecolor{blue}{rgb}{.2,.6,.75}
\definecolor{green}{rgb}{.4,.7,.4}
\definecolor{purple}{RGB}{127,0,255}


\begin{document}

\begin{abstract}
We make explicit an argument of Heath-Brown concerning large and small gaps between nontrivial zeroes of the Riemann zeta-function, $\zeta(s)$. In particular, we provide the first unconditional results on gaps (large and small) which hold for a positive proportion of zeroes. To do this we prove explicit bounds on the second and fourth power moments of $S(t+h)-S(t)$, where $S(t)$ denotes the argument of $\zeta(s)$ on the critical line and $h \ll 1 / \log T$. We also use these moments to prove explicit results on the density of the nontrivial zeroes of $\zeta(s)$ of a given multiplicity.
\end{abstract}

\maketitle

\section{Introduction}

Let $\zeta(s) = \sum_{n=1}^{\infty}n^{-s}$ be the Riemann zeta-function, and write its nontrivial zeroes as $\rho = \beta + \ie\gamma$, where $\beta, \gamma \in \mathbb{R}$. Let $0 < \gamma_1 \le \gamma_2 \le \cdots \le \gamma_n \le \cdots$ denote the ordinates of the nontrivial zeros of $\zeta(s)$ in the upper half-plane. Since
\[
N(T) \de \sum_{0 < \gamma \le T}1 \sim \frac{T}{2\pi}\log T
\]
as $T \to \infty$, the gap between consecutive zeroes $\gamma_{n+1}-\gamma_n$ is $2\pi/\log{\gamma_n}$ on average. Following \cite{CGG85}, define
\begin{equation*}
 D^+(\alpha) \de \limsup_{T\to \infty}D(\alpha, T)\quad \text{and} \quad D^-(\alpha) \de \liminf_{T\to \infty}D(\alpha, T),
\end{equation*}
where
\[
D(\alpha, T) \de \frac{1}{N(T)}\sum_{\substack{0 < \gamma \le T \\ \gamma_{n+1}-\gamma_n\le 2\pi\alpha/\log T}}1.
\]
Note that if $D^-(\mu)>0$ for some $\mu<1$, then a positive proportion of the gaps between consecutive zeroes have length less than $\mu$ times the average spacing. On the other hand, if $D^+(\lambda)<1$ for some $\lambda>1$, then a positive proportion of the gaps between consecutive zeroes have length greater than $\lambda$ times the average spacing. Selberg \cite{SelbergContrib} was the first to obtain unconditionally that such $\mu, \lambda$ exist, however he never published his proof. Fujii \cite{FujiiDiff} also made this observation, and Heath-Brown gives a proof in \cite[Section 9.26]{Titchmarsh}. See the introduction of \cite{CT18} for a discussion concerning this history. Since Selberg's and Fujii's observations, there have been numerous explicit values computed regarding small and large gaps between zeroes of $\zeta(s)$. Let us mention only the most recent results. Define
\[
\mu_0 \de \liminf_{n\to \infty}\frac{\gamma_{n+1}-\gamma_n}{2\pi/\log \gamma_n} \quad \text{and} \quad \lambda_0 \de \limsup_{n\to \infty}\frac{\gamma_{n+1}-\gamma_n}{2\pi/\log \gamma_n},
\]
and note that, trivially, $\mu_0 \le 1 \le \lambda_0$. It is conjectured that $\mu_0 = 0$ (via Montgomery's pair correlation conjecture) and $\lambda_0 = \infty$ (via random matrix theory). Selberg's and Fujii's observations imply $\mu_0 < 1 < \lambda_0$. On the Riemann Hypothesis (RH), Preobrazhenski\u{i} \cite{Pre16} proved that $\mu_0\le0.515396$; Bui--Milinovich \cite{BM18} proved that on RH  $\lambda_0\ge3.18$.
These are the current best results that hold for infinitely many pairs of zeroes under RH. Regarding results that hold for a positive proportion of zeroes, Wu \cite{Wu14} improved previous results by Conrey et al.~\cite{CGG85} and Soundararajan \cite{Sou96}, that on RH one can take $\lambda>1.6989$ and $\mu<0.6553$.  Results holding for a positive proportion of zeroes under additional assumptions can be found in recent work of Chirre et al.\ \cite{CHdL20}.

The main goal of this paper is to provide the first explicit unconditional results for $\mu_0$ and $\lambda_0$.
\begin{theorem}
\label{thm:Main} If $\lambda < 1 + c_0$, then $D^+(\lambda) < 1- c_1$. Thus $\lambda_0 > 1 + c_0$. If $ \mu > 1-(2c_0c_1)/(1-2c_1)$, then $D^-(\mu) > c_2$. Thus $\mu_0 <1-(2c_0c_1)/(1-2c_1)$.
Here
\begin{equation*}
c_0 \de \frac{\pi\exp(4.3)}{\exp(\exp(30.76))}
\end{equation*}
\begin{equation*}
c_1 \de \frac{1}{16e^{99.8}}\left( 1 + \frac{\pi\exp(4.3)}{\exp(\exp(30.76))} - \lambda\right)^2
\end{equation*}
and
\begin{equation*}
c_2\de \frac{(1-2c_1)\mu+2\lambda c_1 -1}{2\mu}.
\end{equation*}
\end{theorem}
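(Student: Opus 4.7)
My plan is to follow Heath-Brown's argument (Titchmarsh, Section 9.26) while tracking all constants explicitly. Set $h = 2\pi\lambda/\log T$ and $\Delta(t) := S(t+h) - S(t)$. The Riemann--von Mangoldt formula gives
\[
I(t) := N(t+h) - N(t) = \lambda + \Delta(t) + O(1/\log T)
\]
uniformly for $t \in [T, 2T]$ with explicit $O$-constant. The essential inputs are the explicit second and fourth moment bounds
\[
\int_T^{2T} \Delta(t)^2\,dt \le M_2\,T, \qquad \int_T^{2T} \Delta(t)^4\,dt \le M_4\,T,
\]
established earlier in the paper, together with an explicit lower bound of the form $\int \Delta^2\,dt \ge c_0^2\,T$ (independent of the zero counts). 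The order of magnitude $e^{99.8}$ in $c_1$ traces to $M_4$.

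For the large-gap claim I partition $[T, 2T]$ by $E_k := \{t : I(t) = k\}$. The identities $\sum_k |E_k| = T + O(1)$, $\sum_k k|E_k| = \lambda T + O(1)$, and $\sum_k (k-\lambda)^2 |E_k| = \int \Delta^2\,dt + O(1)$, combined with the lower bound $\int\Delta^2 \ge c_0^2 T$ and the trivial integer-valuedness bound $\int \Delta^2 \ge (\lambda - 1)^2 T$, will, via a Paley--Zygmund step using $M_4$, give an explicit positive lower bound on $|E_{\neq 1}|$ whenever $\lambda < 1 + c_0$. The factor $(1 + c_0 - \lambda)^2$ in $c_1$ encodes the excess $c_0^2 - (\lambda - 1)^2$ over the trivial bound. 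To pass from the measure bound to the count, I will use the observation $L/N \approx |E_0|/T$, which follows from writing $L = \sum_n \mathbf{1}[I(\gamma_n) = 0] = \int \mathbf{1}[I(t) = 0]\,dN(t)$ and applying the distributional identity $dN(t) = (\log T/(2\pi))dt + dS(t)$.

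For the small-gap claim the argument is combinatorial once $L \ge c_1 N$ is in hand. Writing $L, S, N$ for the counts of large, small, and all gaps in $(T, 2T]$, the identity $\sum_n g_n = T + O(1)$ combined with $g_n > h_\mu := 2\pi\mu/\log T$ on medium gaps and $g_n > h_\lambda$ on large gaps gives
\[
T \ge h_\mu(N - L - S) + h_\lambda L.
\]
Substituting $h_\mu N = \mu T$, $h_\lambda N = \lambda T$, $L \ge c_1 N$, and rearranging yields $D^-(\mu) \ge c_2$ in the stated range of $\mu$. The factor of $2$ in the denominator of $c_2$ I expect to come from a sharpening in which the measure $|E_0|$ (already controlled in the large-gap part) replaces the count $L$ on the right-hand side.

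The principal obstacle will be the Paley--Zygmund step: both $c_0^2$ and $M_4$ need to be established sharply enough that their combination yields a strictly positive, explicit $c_1$, and every error from the Riemann--von Mangoldt formula and the moment computations must be tracked through the chain $\int \Delta^2 \mapsto |E_k| \mapsto L$. It is precisely this delicate balance that forces $c_0$ to be astronomically small, but positive.
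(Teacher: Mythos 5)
Your small-gap argument and the overall Heath-Brown skeleton match the paper, but there are two genuine gaps in the large-gap half. First, you work throughout at the short scale $h=2\pi\lambda/\log T$ and posit an explicit lower bound $\int_T^{2T}\Delta(t)^2\,dt\ge c_0^2T$ ``independent of the zero counts'' to feed into a Paley--Zygmund step. No such bound is available at that scale: in the explicit moment asymptotics (Theorem \ref{thm:difference}) the error term $C_2T\log^{1/2}(\alpha+h\log T)$ swamps the main term $\pi^{-2}T\log(\alpha+h\log T)$ unless $h\log T$ is astronomically large. The paper's essential device, absent from your proposal, is to pass to the long shift $H=2\pi M\lambda/\log T$ with $M=\frac{1}{2\pi}e^{e^{30.76}}$, write $N(t+H)-N(t)-\frac{H\log T}{2\pi}$ as a sum of $M$ translates of $\delta(t,\lambda)$, and use the triangle inequality to convert the first-moment lower bound $\mathcal{J}_1(T,H)\ge M_1T$ (obtained from $M_2,M_3$ via H\"older, not Paley--Zygmund) into $\int_T^{2T}|\delta(t,\lambda)|\,dt\ge (M_1/M)T$. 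This averaging is precisely why $c_0=M_1/(2M)$ has the doubly-exponential denominator; your plan has no mechanism producing it, and your heuristic that $(1+c_0-\lambda)^2$ encodes ``$c_0^2-(\lambda-1)^2$'' is not how the constant arises (it comes from squaring $m(I)\ge(c_0-(\lambda-1))T$).

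Second, your passage from the measure bound to the count $L=|\mathcal{D}^+_\lambda(T)|$ via $L=\int\mathbf{1}[I(t)=0]\,dN(t)$ and $dN=\frac{\log T}{2\pi}dt+dS$ does not close: on each component $[\gamma_n,\gamma_{n+1}-h)$ of the zero-free set the contribution of $dS$ exactly offsets the Lebesgue part (one long gap supplies a lot of measure but only one index $n$), so the identity is circular and gives no lower bound on $L$. The paper instead applies Cauchy--Schwarz to $m(I)\le\sum_{n\in\mathcal{D}^+_\lambda}(\gamma_{n+1}-\gamma_n)+O(1)$ and invokes Korol\"ev's explicit estimate $\sum(\gamma_{n+1}-\gamma_n)^2\le 8\pi^2e^{99.8}N(T)/\log^2T$ as an external input. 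This is also where $e^{99.8}$ in $c_1$ comes from; your attribution of it to the fourth moment $M_4$ is incorrect ($M_3=e^{58.87}$ enters only through $M_1$). Without Korol\"ev's bound or a substitute, a positive proportion of large gaps cannot be extracted from $m(I)\gg T$.
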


To prove Theorem \ref{thm:Main}, we make explicit the argument of Heath-Brown \cite[Section 9.26]{Titchmarsh} which allows one to relate consecutive gaps $\gamma'-\gamma$ to intervals of $[T,2T]$ free of zeroes. To our knowledge this is the first use of this method to obtain explicit results on gaps. The key inserts are explicit bounds on the first, second, and fourth power moments of $S(t+h)-S(t)$, where $S(t)$ denotes the argument of $\zeta(s)$ on the critical line and $h \ll 1 / \log T$.  Here, for $t\in\R$ distinct from the ordinate of any zero of $\zeta(s)$,
\begin{equation*}
\label{eq:Sfunction}
S(t)=\frac{1}{\pi}\arg{\zeta\left(\frac{1}{2}+\ie t\right)},
\end{equation*}
and
\[
S(t)=\lim_{\varepsilon\downarrow0}\frac{1}{2}\big(S(t+\varepsilon)+S(t-\varepsilon)\big),
\]
otherwise, where $\arg{\zeta(1/2+\ie t)}$ is defined by the continuous extension of $\arg{\zeta(s)}$ along straight lines connecting $s=2$, $s=2+\ie t$, $s=1/2+\ie t$, and oriented in this direction, where $\arg{\zeta(2)}=0$. Furthermore, $S(t)$ is a piecewise smooth function with only discontinuities at the imaginary parts of $\rho$, on every interval of continuity is decreasing, and at a point of discontinuity $S(t)$ makes a jump equal to the multiplicity of the zeros on the line $\sigma+it, 1/2\le \sigma \le 2$. It is also known that $S(t)=O\left(\log{t}\right)$ and
\begin{equation}
\label{eq:S1}
S_1(T) = \int_{0}^{T}S(t)\dif{t} = O\left(\log{T}\right).
\end{equation}
Define the $n$-power moment of $S(t+h)-S(t)$ by
\[
\mathcal{J}_{n}(H,h) \de \int_{T}^{T+H} \left|S\left(t+h\right)-S(t)\right|^n \dif{t}.
\]
Tsang has given the following estimate for $\mathcal{J}_{2k}(H,h)$.

\begin{theorem}[Tsang, {\cite[Theorem 4]{Tsang}}]
\label{thm:tsang}
Let $a>1/2$, $T^{a}<H\leq T$ and $0<h<1$. For any positive integer $k$ we have
\begin{flalign*}
\mathcal{J}_{2k}(H,h) = \frac{(2k)!}{2^k\pi^{2k}k!}H\log^{k}&{\left(2+h\log{T}\right)} \\
&+ O\left(H(ck)^k\left(k^k+\log^{k-\frac{1}{2}}{\left(2+h\log{T}\right)}\right)\right),
\end{flalign*}
where $c$ is a positive constant.
\end{theorem}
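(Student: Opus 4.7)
I would follow Selberg's classical prime-sum strategy for computing moments of $S(t)$, adapted to the difference $S(t+h)-S(t)$.

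First, I would invoke Selberg's explicit formula to approximate $S(t)$ by a Dirichlet polynomial over primes. For a parameter $x$ to be chosen later (typically of the form $T^{\theta}$), there is a truncated von Mangoldt-type function $\Lambda_{x}(n)$ supported on $n\leq x^{3}$ with
$$\pi S(t) = -\mathrm{Im}\sum_{n\leq x^{3}}\frac{\Lambda_{x}(n)}{n^{1/2+\ie t}\log{n}} + R(t),$$
where $R(t)$ admits a mean-square bound in terms of nearby zeroes. Subtracting gives
$$\pi\bigl(S(t+h)-S(t)\bigr) = \mathrm{Im}\sum_{n\leq x^{3}}\frac{\Lambda_{x}(n)(1-n^{-\ie h})}{n^{1/2+\ie t}\log{n}} + \bigl(R(t+h)-R(t)\bigr) \ed P(t)+\widetilde{R}(t),$$
reducing the problem to computing the $2k$-th moment of the prime sum $P(t)$ and controlling the contribution of $\widetilde{R}(t)$.

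Next, I would expand $P(t)^{2k}$ by the multinomial theorem and integrate termwise. Since $H>T^{a}$ with $a>1/2$, the integrals $\int_{T}^{T+H}n^{-\ie t}\dif{t}$ are $H$ for $n=1$ and negligibly small for $n\neq 1$ as long as $n\leq x^{6k}$ stays well below a power of $H$. The surviving "diagonal" terms organize themselves via the Wick/Gaussian pairing identity into $(2k)!/(2^{k}k!)$ copies of a variance raised to the $k$-th power, with variance
$$\sigma^{2} \de \frac{1}{\pi^{2}}\sum_{p\leq x^{3}}\frac{|1-p^{-\ie h}|^{2}}{p\log^{2}{p}}\Lambda_{x}(p)^{2}.$$
To show $\sigma^{2}\sim\pi^{-2}\log(2+h\log{T})$ one splits the sum at $p\asymp e^{1/h}$: below this threshold $|1-p^{-\ie h}|^{2}\asymp(h\log p)^{2}$, above it the quantity is $O(1)$, and Mertens' theorem then produces the claimed logarithm.

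Finally, I would bound the off-diagonal and remainder contributions. Off-diagonal terms in the expansion of $P(t)^{2k}$ are handled by a Montgomery--Vaughan-type mean value theorem for Dirichlet polynomials; careful counting of the non-trivial pairings yields the $(ck)^{k}$ weight and the $\log^{k-1/2}(2+h\log T)$ save. The remainder $\widetilde{R}(t)$ is controlled by bounding its $2k$-th moment via zero-density estimates, producing the $(ck)^{k}k^{k}$ piece; Cauchy--Schwarz then stitches together the mixed cross terms $P^{j}\widetilde{R}^{2k-j}$ via the AM-GM inequality. The main obstacle I anticipate is the uniformity in $k$: each step loses factorial factors, and reaching the stated error shape requires (i) a sharp combinatorial accounting of pairings in the multinomial expansion (rather than a crude mean-value bound, which would give $k^{2k}$ instead of $k^{k}$), and (ii) an optimal choice of $x$ as a function of $a$, $H$, $h$, $T$, and $k$ to balance the prime-sum tail against the zero-theoretic bound on $\widetilde{R}$. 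Getting the combinatorial constant right, rather than losing a factor of $k!$ or similar at each stage, is the delicate part of the argument.
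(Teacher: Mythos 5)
Your plan is essentially the argument of Tsang (and of Selberg before him), which is also exactly the machinery this paper implements explicitly for $k=1,2$: Selberg's approximation of $S(t)$ by a prime sum with $\Lambda_x(n)$, a Montgomery--Vaughan/Preissmann mean value theorem with the $(2k)!/(2^kk!)$ pairing count (Lemma \ref{lem:tsang2}), the evaluation $\sum_{p\le x}(1-\cos(h\log p))/p\sim\log(h\log x)$ by splitting at $p\approx e^{1/h}$ (Lemma \ref{lem:primes}), zero-density control of the remainder, and H\"{o}lder for the cross terms. Note that the paper itself only cites this theorem from Tsang rather than proving it, but your outline matches both the original proof and the paper's explicit adaptation in Sections \ref{pitch}--\ref{stumps}; the only quibble is a factor-of-two bookkeeping in your stated $\sigma^2$ (the $\tfrac12$ from averaging $\Im\{\cdot\}^2$ must be combined with $|1-p^{-\ie h}|^2=2(1-\cos(h\log p))$), which washes out in the final constant.
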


This slightly improves previous bounds obtained by Fujii, see \cite{FujiiZerosI} and \cite{Fujii}. To prove Theorem \ref{thm:Main}, we require an explicit version of Tsang's result for $\mathcal{J}_2(T,h)$ and $\mathcal{J}_4(T,h)$. In particular, we prove that

\begin{theorem}
\label{umpires}
Let $M\de \frac{1}{2\pi}e^{e^{30.76}}$ and $1\leq\lambda\leq2$. Then there exists $T_0>0$ such that
\[
\mathcal{J}_1\left(T,\frac{2\pi M\lambda}{\log{T}}\right) \geq M_1 T, \quad \mathcal{J}_2\left(T,\frac{2\pi M\lambda}{\log{T}}\right) \geq M_2 T, \quad
\mathcal{J}_4\left(T,\frac{2\pi M\lambda}{\log{T}}\right) \leq M_3 T,
\]
for all $T\geq T_0$, where $M_1\de e^{4.3}$, $M_2\de e^{22.49}$ and $M_3\de e^{58.87}$.
\end{theorem}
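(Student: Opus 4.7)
The plan is to make every constant explicit in the Selberg--Fujii--Tsang argument: represent $S(t+h)-S(t)$ via an explicit form of Selberg's formula as a short Dirichlet polynomial over prime powers plus a remainder $\mathcal{E}(t,x,h)$ that is small in mean square, then integrate the square (resp.\ fourth power) while tracking every implicit constant.  Schematically one obtains
\[
\pi\bigl(S(t+h)-S(t)\bigr)=-\mathrm{Im}\sum_{n\leq x^{3}}\frac{\Lambda_{x}(n)(n^{-\ie h}-1)}{n^{1/2+\ie t}\log n}+\mathcal{E}(t,x,h),
\]
with $\Lambda_{x}$ Selberg's truncated von Mangoldt function.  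One must choose $x$ with $\log x\asymp\log T$ (to recover the Tsang main term from the diagonal) and $x^{3}\ll T$ (so that off-diagonal exponential integrals $\int_{T}^{T+H}(m/n)^{\ie t}\dif{t}\ll |\log(m/n)|^{-1}$ are genuinely smaller than $T$).

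For $\mathcal{J}_{2}$ I would square the representation, integrate, and isolate the diagonal $m=n$ piece, which evaluates via explicit Mertens/Dusart-type prime estimates to $\tfrac{T}{\pi^{2}}\log(2+h\log T)+(\text{explicit error})$.  Since $h\log T=\lambda e^{e^{30.76}}$ by the choice of $M$, the leading term is of size $\approx Te^{28.47}$, while the target lower bound is $M_{2}T=e^{22.49}T$; the resulting $\approx e^{5.98}$ factor of slack comfortably absorbs the contribution of proper prime powers, the off-diagonal sums, and the remainder $\mathcal{E}$.

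The fourth moment is the crux of the proof.  Expanding $|S(t+h)-S(t)|^{4}$ yields a sum over quadruples $(n_{1},n_{2},n_{3},n_{4})$, whose dominant contribution comes from the three pairings $n_{1}n_{2}=n_{3}n_{4}$; these reassemble into the main term $\tfrac{3T}{\pi^{4}}\log^{2}(2+h\log T)\approx Te^{58.04}$, only a factor $\approx e^{0.83}$ below the required upper bound $M_{3}T=e^{58.87}T$.  This tightness is the main obstacle: every off-diagonal, exceptional, or remainder contribution has to be kept within a multiplicative factor of roughly two of the main term.  I would handle this by splitting the quadruple sum according to the size of $\log(n_{1}n_{2}/n_{3}n_{4})$, invoking explicit Montgomery--Vaughan mean-square inequalities for Dirichlet polynomials, treating proper prime powers via explicit sieve estimates, and using the Cauchy--Schwarz inequality to bound the mixed polynomial-$\mathcal{E}$ cross terms by a product of a second-moment bound and an explicit $L^{4}$-norm estimate for $\mathcal{E}$.

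The first moment is then automatic from H\"older's inequality.  Applying the exponents $(p,q)=(3,3/2)$ to the factorisation $|f|^{2}=|f|^{2/3}|f|^{4/3}$ gives $\mathcal{J}_{2}\leq\mathcal{J}_{1}^{2/3}\mathcal{J}_{4}^{1/3}$, whence
\[
\mathcal{J}_{1}\geq\frac{\mathcal{J}_{2}^{3/2}}{\mathcal{J}_{4}^{1/2}}\geq\frac{M_{2}^{3/2}}{M_{3}^{1/2}}\,T=e^{(3/2)(22.49)-(1/2)(58.87)}\,T=e^{4.3}\,T=M_{1}T.
\]
Hence the three constants $M_{1},M_{2},M_{3}$ in the statement are exactly calibrated for this deduction, and the conceptual structure is Selberg/Tsang; the entire difficulty is a careful explicit accounting in $\mathcal{J}_{4}$ so that the numerical value $M_{3}=e^{58.87}$ is not exceeded.
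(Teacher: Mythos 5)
Your overall strategy is the same as the paper's: approximate $S(t+h)-S(t)$ by a short Dirichlet polynomial over primes via Selberg's formula, extract the main terms $\tfrac{1}{\pi^2}\log(\alpha+h\log T)$ and $\tfrac{3}{\pi^4}\log^2(\alpha+h\log T)$ from the diagonal using an explicit Montgomery--Vaughan/Preissmann mean value theorem together with Mertens-type sums for $\sum_p(1-\cos(h\log p))/p$, and then recover $\mathcal{J}_1$ by H\"older exactly as in the paper, where indeed $M_1=\sqrt{M_2^3/M_3}$ and $\tfrac32(22.49)-\tfrac12(58.87)=4.3$. That skeleton is correct.

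There is, however, one genuine gap, and it is precisely where you declare the problem easy. You assert that the $e^{5.98}$ factor between the second-moment main term $e^{30.76}T/\pi^2\approx e^{28.47}T$ and the target $M_2T=e^{22.49}T$ ``comfortably absorbs'' the remainder $\mathcal{E}$. It does not: the only explicit bound one can currently prove for $\int_T^{2T}|\mathcal{E}|^{2}$ is of size about $1.4\cdot10^{11}\,T$ (the paper's Theorem \ref{thm:mainForS} with $k=1$), and after Cauchy--Schwarz against the Dirichlet polynomial this produces an error term $2\sqrt{E_1E_2}\,T\log^{1/2}(\alpha+h\log T)$ whose coefficient is roughly $e^{13.1}$; the positivity condition then reads $\pi^2 C_2<\sqrt{\log(2\pi M)}=e^{15.38}$, which holds by a hair --- the paper notes that replacing $30.76$ by $30.75$ in the definition of $M$ already fails. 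So the second moment, not the fourth, is the binding constraint, and it is what forces the doubly exponential size of $M$. More importantly, your plan gives no route to the required explicit $L^2$ and $L^4$ estimates for $\mathcal{E}$: obtaining them is the bulk of the work (all of Section \ref{pitch}), and it requires an explicit zero-density estimate of Selberg type near the critical line (Lemma \ref{lem:zde}), an explicit version of Selberg's approximation formula with numerical constants $a_1,a_2$, and the integral estimates for $(\sigma_{x,t}-\tfrac12)^{\nu}\xi^{\sigma_{x,t}-1/2}$ of Lemma \ref{lem:crucial}. Without specifying these inputs and their numerical sizes, the claim that the errors fit under $M_2$ and $M_3$ cannot be verified, and the proof is incomplete at exactly the point where the stated constants are decided.
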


The moments of $S(t+h)-S(t)$ are also intimately connected with the density of the nontrivial zeroes of given multiplicity. We show the following.

\begin{theorem}
\label{thm:multMain}
Let $N_j(T)$ be the number of $\rho$ with $0<\gamma\leq T$ and multiplicity $j\geq1$. Then there exists $T_0>0$ such that
\[
\frac{N_j(T)}{N(T)} \leq 1.014\cdot e^{-6.459\cdot10^{-7}j}
\]
for every $j\in\N$ and $T\geq T_0$.
\end{theorem}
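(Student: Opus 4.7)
The plan is to convert the explicit fourth-moment bound $\mathcal{J}_4(T,h) \leq M_3 T$ from Theorem \ref{umpires} into an upper bound on $N_j(T)$ by exploiting the jump structure of $S(t)$: at a zero $\rho = \beta + \ie\gamma$ of multiplicity $j$, the function $S$ jumps upward by exactly $j$ at $t=\gamma$, while on intervals of continuity it is decreasing. This jump enters the difference $S(t+h)-S(t)$ whenever $\gamma\in(t,t+h)$, so a Chebyshev-type argument should let us bound the number of mult-$\geq j$ zeros in $(T,2T)$ by the fourth moment divided by roughly $h\cdot j^4$.

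Concretely, for a zero $\gamma$ of multiplicity $\geq j$ and $t\in(\gamma-h,\gamma)$, writing $S(t+h)-S(t)=j+J'(t,h)-D(t,h)$ with $J'$ counting multiplicities of other zeros in $(t,t+h)$ and $D$ the continuous decrease of $S$ there, a direct computation of $\int_{\gamma-h}^{\gamma}(S(t+h)-S(t))\,dt$ followed by Jensen's inequality for $x\mapsto x^4$ should yield a lower bound of the form
\[
\int_{\gamma-h}^{\gamma} (S(t+h)-S(t))^4\,dt \geq h\,(j-O(1))^4.
\]
A Vitali-type covering then extracts from the mult-$\geq j$ zeros in $(T,2T)$ a sub-collection with pairwise disjoint intervals $(\gamma-h,\gamma)$, and comparing with $\mathcal{J}_4(T,h)\leq M_3 T$ gives
\[
N_{\geq j}(T,2T) \leq \frac{K\,M_3\,T\log T}{2\pi M\lambda\,(j-O(1))^4},
\]
so that after dividing by $N(2T)-N(T)\sim T\log T/(2\pi)$ and summing dyadically,
\[
\frac{N_j(T)}{N(T)} \leq \frac{K'\,M_3}{M\lambda\,(j-O(1))^4}.
\]

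To upgrade this polynomial-in-$j$ bound to the exponential form $1.014\cdot e^{-6.459\cdot 10^{-7}j}$, I would split into two regimes. For $j \leq \log(1.014)/c \approx 2.15\cdot 10^4$, the trivial bound $N_j(T)/N(T)\leq 1$ already yields $N_j(T)/N(T) \leq 1 \leq 1.014\,e^{-cj}$; for larger $j$, the polynomial bound is compared against the exponential one, and the parameters $\lambda \in [1,2]$ and $c = 6.459\cdot 10^{-7}$ are calibrated so that the two bounds meet at the transition and together cover the full range of $j$ up to the maximal multiplicity $O(\log T)$. The principal obstacle is the averaged lower bound: one must argue that the implicit error in the denominator is $O(1)$ rather than $O(M\lambda)$---the latter would render the polynomial bound useless for moderately sized $j$, given the astronomical size of $M=e^{e^{30.76}}/(2\pi)$---and then carry out the numerical optimization of $\lambda$ to land on the precise constants $1.014$ and $6.459\cdot 10^{-7}$ appearing in the statement.
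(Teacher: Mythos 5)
There are two independent gaps here, and each one is fatal on its own. First, the averaged lower bound $\int_{\gamma-h}^{\gamma}\left(S(t+h)-S(t)\right)^4\dif{t}\geq h\,(j-O(1))^4$ is false for the shift $h=2\pi M\lambda/\log T$ of Theorem \ref{umpires}. Writing $\int_{\gamma-h}^{\gamma}\left(S(t+h)-S(t)\right)\dif{t}=\int_{\gamma}^{\gamma+h}S(u)\,\dif{u}-\int_{\gamma-h}^{\gamma}S(t)\,\dif{t}$ and using \eqref{eq:S1}, the mean of $S(t+h)-S(t)$ over $(\gamma-h,\gamma)$ is $O(\log^2 T/M)$: the jump of size $j$ at $\gamma$ is completely swallowed by the drift $-\frac{h}{2\pi}\log T=-M\lambda$ and by the $\approx M\lambda$ other zeroes in the window, whose local fluctuation is of RMS size $\asymp\sqrt{\log(h\log T)}\approx e^{15.4}$, not $O(1)$. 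So Jensen gives nothing, and no choice of constants can rescue a first-moment average over a window containing $e^{e^{30.76}}$ zeroes. Second, even granting a clean bound $N_{\geq j}\ll T\log T/j^4$, a fixed moment can only ever produce polynomial decay in $j$, and $C/j^4$ does \emph{not} imply $1.014\,e^{-6.459\cdot10^{-7}j}$ uniformly: the two curves cross near $j\approx e^{45}$ (the best available prefactor is $\asymp M_3/M\approx e^{-e^{30.76}}$, and $6.459\cdot10^{-7}j$ exceeds $e^{30.76}+4\log j$ once $j\gtrsim e^{45}$), while $j$ may be as large as a constant times $\log T$ with $T$ unbounded. Your ``calibration at the transition'' therefore cannot cover the full range of $j$; the theorem would fail for $T\gtrsim\exp(e^{45})$ and $j$ in the upper part of the admissible range.

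The paper avoids both problems by a different mechanism. It takes a \emph{short} shift $h(T,\eta)=\frac{2\pi}{\eta}(\log\frac{T}{2\pi})^{-1}$ as in \eqref{eq:h}, with $\eta$ a fixed constant, so that the drift over $(t,t+2h]$ is exactly $2/\eta$ and Lemma \ref{lem:lbS} yields the \emph{pointwise} lower bound $S(t+2h)-S(t)\geq \nu j-2/\eta-\varepsilon$ on $(\gamma-2h,\gamma-h]$ --- no averaging, and the other zeroes only help since they contribute nonnegatively to $N(t+2h)-N(t)$. Exponential decay in $j$ is then obtained not from the fourth moment but from the uniform-in-$k$ bound $\mathcal{J}_{2k}(T,2h)\leq(3\omega_0 k)^{2k}T$ of Lemma \ref{lem:gbJ} (which rests on Theorem \ref{thm:mainForS} and Corollary \ref{cor:mainValues1}, not on Theorem \ref{umpires}): applying Chebyshev with the optimized choice $k=\lfloor\lambda/(3e\omega_0)\rfloor$ gives the large-deviation estimate $m(D(\lambda))\leq Te^{4-\kappa\lambda}$ with $\kappa=2/(3e\omega_0)$ (Lemma \ref{lem:D}), and a disjoint-interval covering plus the geometric series $\sum_n ne^{-n\kappa j}$ yields Theorem \ref{thm:mult} and then Theorem \ref{thm:multMain} by dyadic summation. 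In particular the constant $6.459\cdot10^{-7}$ is $\kappa$, determined by $\omega_0=e^{12.8471}$, and $1.014$ comes from optimizing $\delta$ in $2e^5\kappa=\delta(1-1/\delta)^2$ --- neither has anything to do with $M$, $M_3$, or the parameter $\lambda$ of Theorem \ref{umpires}. If you want to salvage your outline, the essential missing idea is to let the moment order grow linearly with $j$.
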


It is conjectured that all of the zeroes of the zeta-function are simple. This has been confirmed for at least 40.75\% of zeroes, that is $\liminf_{T\to\infty}N_{1}(T)/N(T)\geq 0.4075$, see \cite[p.\ 11]{PRZZ}. From this it follows that
\begin{equation}
\label{chair}
\frac{N_{j}(T)}{N(T)}\leq \frac{1-0.4075}{j},
\end{equation}
for all $j>1$. While some improvements on \eqref{chair} are possible by considering unconditional bounds on distinct zeroes of $\zeta(s)$, see \cite{Farmer} for example, we remark that Theorem \ref{thm:multMain} gives the first explicit improvement upon \eqref{chair} when $j\geq 2.8\cdot 10^{7}$.

The outline of this paper is as follows. In Section \ref{method} we use the method of Selberg and Fujii as described by Heath-Brown \cite{Titchmarsh} to prove Theorem \ref{thm:Main} under the assumption of Theorem \ref{umpires}. In Section \ref{pitch} we give an explicit approximation of $S(t)$  by means of a trigonometric polynomial. This is used in Section \ref{stumps} to give explicit bounds for moments of $S(t+h) - S(t)$ which leads to the proof of Theorem \ref{umpires}. Finally, in Section \ref{densityproof} we combine results from Sections \ref{pitch} and \ref{stumps}, to give the proof of Theorem \ref{thm:multMain}.

\section{Proof of Theorem \ref{thm:Main}}
\label{method}

Assuming the validity of Theorem \ref{umpires} we may prove Theorem \ref{thm:Main} using the method of Selberg and Fujii as described by Heath-Brown in \cite[Section 9.26]{Titchmarsh}. Define
\begin{equation*}
\mathcal{D}^+_\lambda(T)\de \left\{n\in\N\colon T \le \gamma_n \le 2T, \gamma_{n+1}-\gamma_n\ge \frac{2\pi\lambda}{\log T}\right\}
\end{equation*}
and 
\begin{equation*}
\mathcal{D}^-_\mu(T)\de \left\{n\in\N\colon T \le \gamma_n \le 2T, \gamma_{n+1}-\gamma_n\le \frac{2\pi\mu}{\log{(2T)}}\right\}.
\end{equation*}
For technical reasons which will become clear in the proof,  we divide $2\pi\mu$ by $\log{2T}$ in the definition of $\mathcal{D}^-_\mu(T)$ so that
\[
D\left(\mu,2T\right) := \frac{\left|\left\{n\in\N\colon 0<\gamma_n\leq 2T,\gamma_{n+1}-\gamma_n\leq 2\pi\mu/\log{(2T)}\right\}\right|}{N(2T)} \geq \frac{\left|\mathcal{D}_{\mu}^{-}(T)\right|}{N(2T)}.
\]
Heath-Brown shows that there exists $\lambda >1$ such that $\mathcal{D}^+_\lambda(T) \gg  N(T)$, from which the large gaps result follows. Using the existence of such $\lambda$, he then obtains the small gaps result by proving that there exists $0<\mu <1$ such that $\mathcal{D}^-_\mu \ll  N(T)$. To prove Theorem $\ref{thm:Main}$, we show the existence of $\lambda >1$, $\mu < 1$, $c_1>0,$ and $c_2>0$ such that $\left|\mathcal{D}_{\lambda}^{+}(T)\right|/N(2T)\geq c_1$ and $\left|\mathcal{D}_{\mu}^{-}(T)\right|/N(2T)\geq c_2$. 

\begin{remark}
While we wish to explore an explicit result on gaps between zeroes, we have the luxury of performing all of these calculations for $T$ sufficiently large. For example, if we write $T + O(T/\log T) \geq T(1-\varepsilon)$ for sufficiently small $\varepsilon$ we are spared the ordeal of computing the implied constant in the $O(\cdot)$ notation. To ease exposition, these $\varepsilon$'s may not necessarily be the same in each step in what follows, and we shall not repeat the condition that $T$ be sufficiently large.
\end{remark}

\subsection{Proof of large gaps result.}

Let $\lambda \ge 1$, and let $I$ be a subset of $[T,2T]$ on which $N(t+2\pi\lambda/\log T) = N(t)$ and thus free of zeroes of $\zeta(s)$. In $[T,2T]$, the average gap between consecutive zeroes is $2\pi/\log T$, so if such a subset $I$
exists with $\lambda >1$, it follows that there exists a pair of consecutive zeroes $\gamma,\gamma'$ whose difference is at least $2\pi\lambda/\log T$, i.e., larger than the average spacing. The bridge connecting $m(I)$, the measure of $I$, and $|\mathcal{D}^+_\lambda(T)|$ begins with the observation that
\begin{equation}
\label{bridge}
m(I) \le \sum_{n\in \mathcal{D}^+_\lambda(T)} (\gamma_{n+1}-\gamma_n) +O(1).
\end{equation}
This is true because we have
\[
I \subseteq \left[T,\gamma\right) \cup \bigcup_{n\in \mathcal{D}^+_\lambda(T)} \left[\gamma_n,\gamma_{n+1}\right)
\]
for the first $\gamma$ in $[T,2T]$, and gaps between zeroes are bounded,\footnote{Indeed, see \cite{Sim18} for a proof that the gap between the first two zeroes $\gamma_{1}=14.13\ldots$ and $\gamma_{2}=21.02\ldots$ is the largest of all gaps between consecutive zeroes.} therefore $\gamma-T=O(1)$. We now build off of \eqref{bridge} in two directions. The first and more delicate step is to obtain an explicit lower bound on $m(I)$ of size $T$. Note that on $I$ we have $N(t+2\pi\lambda/\log T) -N(t)=0$. Therefore, to understand $m(I)$ we will study how the difference  $N(t+2\pi\lambda/\log T) -N(t)$ varies for $t \in[T,2T]$. We can reframe this in terms of $S(t)$, the argument of $\zeta(s)$ on the critical line, via the Riemann-von Mangoldt formula
\begin{equation}\label{gamba}
N(t) = \frac{1}{2\pi}t\log t - \frac{1+\log 2\pi}{2\pi}t+ \frac{7}{8} + S(t) + R(t),
\end{equation}
where $R(t) = O(1/t)$ as $t\to \infty$. If $t\in [T, 2T]$ and $0\leq h\ll 1/\log{T}$, then \eqref{gamba} gives
\begin{equation}\label{shifts}
N(t+h) - N(t) - \frac{h\log T}{2\pi} = S(t+h) - S(t) + O\left(\frac{1}{\log T}\right).
\end{equation}
With the left-hand side of \eqref{shifts} in mind, set
\begin{equation}\label{delta}
\delta(t,\lambda)\de N\left(t+\frac{2\pi\lambda}{\log T}\right)-N(t)-\lambda,
\end{equation}
and note that for any $\lambda$, the function $\delta(t,\lambda)$ gives the discrepancy between the actual number of zeroes in $\left(t, t + 2\pi\lambda/\log T\right)$ and the expected number. The estimation of averages of $\delta(t, \lambda)$ will be critical in establishing our main result. Note that\footnote{For $t\notin I$,  note that $N(T+ h) - N(T) \ge 1$, and then consider separately the cases  $N(T+ h) - N(T) \ge \lambda$ and $N(T+ h) - N(T) \le \lambda$.}
\begin{equation*}
|\delta(t,\lambda)| = \begin{cases}
\delta(t,\lambda)+ 2\lambda & \text{if } t \in I,\\
\delta(t,\lambda)+ 2\lambda - 2\lambda, & \text{if } t \notin I,
\end{cases}
\end{equation*}
which implies
\begin{equation}\label{deltaint}
\int_{T}^{2T}|\delta(t,\lambda)|\dif{t} = \int_{T}^{2T}\delta(t,\lambda)\dif{t}+(2\lambda-2)T+2m(I).
\end{equation}
Let $h=2\pi \lambda M/\log T$ with $M$ as in Theorem \ref{umpires}, and observe that
\begin{equation*}
N(t+h)-N(t)-\frac{h\log T}{2\pi} = \sum_{m=0}^{M-1}\delta\left(t+\frac{2\pi m \lambda}{\log T},\lambda \right).
\end{equation*}
Integrating both sides and performing a change of variable, we have
\begin{align*}
\int_{T}^{2T}\left|N(t+h)-N(t)-\frac{h\log T}{2\pi}\right|\dif{t} &= \int_{T}^{2T}\left|\sum_{m=0}^{M-1}\delta\left(t+\frac{2\pi m \lambda}{\log T},\lambda \right)\right|\dif{t} \\
&=\sum_{m=0}^{M-1} \int_{T+2\pi m \lambda / \log T}^{2T+2\pi m \lambda / \log T} |\delta(t,\lambda)|\dif{t}\\
& = M\int_{T}^{2T}|\delta(t,\lambda)|\dif{t} + O(1).
\end{align*}
On the other hand, by Theorem \ref{umpires} we have
\begin{equation*}
\int_{T}^{2T} \left| N(t+h) - N(t) - \frac{h\log T}{2\pi}\right|\dif{t} \geq \left(M_{1}-\varepsilon\right)T,
\end{equation*}
where $M_1$ is given in the statement of Theorem \ref{umpires}. Thus
\begin{equation*}
\label{lute}
\left(M_{1}- \varepsilon\right)T \leq  M \int_{T}^{2T} \left|\delta\left(t, \lambda\right)\right|\dif{t} + O(1).
\end{equation*}
Rearranging, and taking into account the $O(1)$, we find
\begin{equation}
\label{harp}
\int_{T}^{2T}|\delta(t, \lambda)|\dif{t} \geq \left(\frac{M_{1}}{M} - \varepsilon\right)T.
\end{equation}
By \eqref{deltaint} and \eqref{harp}, we have
\begin{equation}\label{oud}
\left(\frac{M_{1}}{M} - \varepsilon\right)T \leq \int_{T}^{2T} \delta(t, \lambda)\dif{t} + (2\lambda -2)T + 2m(I).
\end{equation}
Recalling the definition \eqref{delta} of $\delta(t,\lambda)$ and applying \eqref{shifts} and \eqref{eq:S1}, we have that the first term on the right-hand side of \eqref{oud} is $\ll T/\log T$. It follows that
\begin{equation*}
m(I) \geq \left(\frac{M_{1}}{2M} - (\lambda -1) - \varepsilon\right)T.
\end{equation*}
Therefore, by \eqref{bridge}, we have
\begin{equation}\label{bridge2}
 \left(\frac{M_{1}}{2M} - (\lambda -1) - \varepsilon\right)T \le \sum_{n\in \mathcal{D}^+_\lambda(T)} (\gamma_{n+1}-\gamma_n) +O(1).
\end{equation}
Squaring both sides of \eqref{bridge2} and applying Cauchy--Schwarz, it follows that
\begin{equation}\label{biggerbridge}
T^2\left(\frac{M_{1}}{2M} - (\lambda -1) - \varepsilon\right)^2  \le 2|\mathcal{D}^+_\lambda(T)|\sum_{n\in\mathcal{D}^+_\lambda(T)} (\gamma_{n+1}-\gamma_n)^2 +O(1).
\end{equation}
Korol\"{e}v \cite{Korolev} has shown that
\begin{equation}\label{kor}
\sum_{T/2\le \gamma_{n}\leq T} \left(\gamma_{n+1} - \gamma_{n}\right)^{2} \leq K\frac{N(T)}{\log^{2}{T}},
\end{equation}
where $K=8\pi^2e^{99.8}$. 
Combining \eqref{biggerbridge} with \eqref{kor} we finally have
\begin{equation}
\label{horn}
\frac{\left|\mathcal{D}_{\lambda}^{+}(T)\right|}{N(2T)} \geq \frac{\pi^{2}}{2K} \left(\frac{M_{1}}{2M} + 1 - \lambda\right)^{2} - \varepsilon \ed c_1.
\end{equation}
Hence, to ensure the positivity of $c_1$, we can take $\lambda<1+M_1/(2M)$, e.g., we may take $\lambda = 1 + 397\cdot 10^{-9.93\cdot10^{12}}$. Note that the size of $\lambda$ does not depend on the constant $K$ in Korol\"ev's result, however the size of the proportion of such large-gaps does.

\subsection{Proof of small gaps result}

A unique feature of this method (compared, for example, to the method of \cite{MO} and \cite{CGG84}) is that the result on small gaps is a consequence of the result on large gaps. Similar to \eqref{bridge}, the starting point is to consider the sum
\[
\sum_{T \le \gamma_n \le 2T} (\gamma_{n+1}-\gamma_n).
\]
In this case the telescoping sum is taken over all zeroes in $[T,2T]$, and thus equals $T + O(1)$.  On the other hand, take $0 < \mu < 1$ constant and note that
\begin{flalign*}
\sum_{T \le \gamma_n \le 2T} \left(\gamma_{n+1}-\gamma_n\right) &\ge \sum_{\substack{\gamma_{n+1}-\gamma_n > \frac{2\pi\mu}{\log{(2T)}} \\ T \le \gamma_n \le 2T}} \left(\gamma_{n+1}-\gamma_n\right) \\
&\ge \left(\sum_{\substack{\frac{2\pi\mu}{\log{(2T)}}<\gamma_{n+1}-\gamma_n < \frac{2\pi\lambda}{\log{(2T)}} \\ T \le \gamma_n \le 2T}} + \sum_{\substack{\gamma_{n+1}-\gamma_n \ge \frac{2\pi\lambda}{\log{(2T)}} \\ T \le \gamma_n \le 2T}}\right) \left(\gamma_{n+1}-\gamma_n\right) \\
&\geq \frac{2\pi\mu\left(N(2T)-N(T)-\left|\mathcal{D}^-_\mu(T)\right|-\left|\mathcal{D}^+_\lambda(T)\right|\right)+2\pi\lambda\left|\mathcal{D}^+_\lambda(T)\right|}{\log (2T)}. 
\end{flalign*}
Since $N(2T)-N(T)=\frac{1}{2\pi}T\log{(2T)}+O(T)$, we find
\begin{align*}
T &\ge \mu T - \frac{2\pi\mu}{\log{(2T)}}\left|\mathcal{D}^-_\mu(T)\right| + \frac{2\pi(\lambda-\mu)}{\log{(2T)}}\left|\mathcal{D}^+_\lambda(T)\right| + O\left( \frac{T}{\log{(2T)}}\right).
\end{align*}
Using $c_1$, defined in \eqref{horn}, recalling the convention $T + O(T/\log{(2T)}) = T(1-\varepsilon)$, and rearranging, we find
\begin{equation*}
\frac{|D^-_\mu(T)|}{N(2T)} \ge \frac{\lambda - \mu}{\mu}\cdot c_1 - \frac{1-\mu+\varepsilon}{2\mu(1-\varepsilon)} \geq \frac{\left(1-2c_1\right)\mu + 2\lambda c_1 - 1}{2\mu} - \varepsilon \ed c_2.
\end{equation*}
Thus, to assure positivity of $c_2$, we can take
\[
\mu > 1 - \frac{M_1}{M}\frac{c_1}{1-2c_1}.
\]

\subsection{Limitations of the method}

It is natural to ask whether we could improve substantially on the bounds given in Theorem \ref{thm:Main}.
We note that the critical quantity for $\lambda$ is $M_{1}/M$. That $M$ is forced to be so large is a direct consequence of the size of a constant appearing in the proof of Theorem \ref{umpires}, namely $C_{2}$ in (\ref{lager}). A thorough reworking  of some of the lemmas in \cite{KaratsubaKorolevArgument} and \cite{KaratKor} may yield some further improvements.

The exact limit of such improvements is not clear. A best-possible scenario (almost certainly too good to be true) is one in which there are no error terms in Theorem \ref{thm:difference}: that is, where $C_{2} = C_{3} = 0$. It therefore follows that we may take $M=1$ and
\begin{equation}\label{fish}
M_{1} = \sqrt{\frac{\log(1 + 2\pi \lambda)}{3\pi^{2}}}.
\end{equation}
We can therefore solve for the condition $\lambda < 1 + M_{1}/2$, where $M_{1}$ is in (\ref{fish}). This shows that one may take any $\lambda < 1.1286\ldots$. The actual limit of our method is, in all likelihood, much smaller than this.

We conclude by noting that any improvement on the value of $L$ in the zero-density estimate in (\ref{eq:szde}) has only a minimal effect on the value of $\lambda$. For example, on RH we have $L=0$, which is not enough to improve the bound on $\lambda$ to $1 + 10^{-10^{12}}$.

\section{Explicit approximation of $S(t)$ by segments of a Dirichlet series}
\label{pitch}

Let $N(\sigma,T)$ be the number of the nontrivial zeroes $\rho = \beta + \ie\gamma$ of $\zeta(s)$ with $\beta>\sigma\geq1/2$ and $0<\gamma\leq T$. Nontrivial bounds for $N(\sigma,T)$ are called zero density estimates. The main result of this section is the following theorem, which will be instrumental in the proofs of Theorem \ref{umpires} and Theorem \ref{thm:multMain}.

\begin{theorem}
\label{thm:mainForS}
Let $k\in\N$, $0<\varepsilon\leq1/88$ and $x_0\geq e^{16}$. Assume that
\begin{equation}
\label{eq:szde}
N(\sigma,2T)-N(\sigma,T)\leq L\cdot T^{1-\frac{1}{4}\left(\sigma-\frac{1}{2}\right)}\log{T}
\end{equation}
for $T\geq T_0\geq \frac{1}{2}e^{1408}$, $\sigma\in[1/2,1]$ and $L>0$. Then we have
\begin{equation}
\label{eq:mainS}
\int_{T}^{2T}\left|S(t)+\frac{1}{\pi}\sum_{p\leq T^{\frac{3\varepsilon}{k}}}\frac{\sin{\left(t\log{p}\right)}}{\sqrt{p}}\right|^{2k}\dif{t} \leq C\left(\varepsilon,k,x_0\right)T
\end{equation}
for $T\geq \max\left\{x_0^{k/\varepsilon},2T_0\right\}$, where
\begin{gather}
C\left(\varepsilon,k,x_0\right)\de\widehat{C}\left(\varepsilon,\varepsilon,k,x_0\right), \label{eq:kk1} \\
\widehat{C}\left(\varepsilon_1,\varepsilon_2,k,x_0\right) \de \frac{1}{6}\left(1+\sum_{n=1}^{4}\widetilde{R}_n\left(\varepsilon_1,k,x_0\right)\right)\left(\frac{12a_2\left(x_0\right)a_4\left(x_0,\varepsilon_2, k\right)}{\varepsilon_2}k\right)^{2k}. \label{eq:Chat}
\end{gather}
Here,
\begin{gather*}
    \widetilde{R}_1\left(\varepsilon,k,x_0\right) \de \frac{8a_0 L}{\varepsilon}\left(8\varepsilon\right)^{2k}\frac{(2k)!}{k^{2k-1}} + \left(\frac{3\varepsilon}{2\pi a_2\left(x_0\right)a_4\left(x_0,\varepsilon, k\right)\sqrt{2k}}\right)^{2k}\left(13+\frac{1}{18^k}\right), \\
    \widetilde{R}_2\left(\varepsilon,k,x_0\right) \de \sqrt{13}\left(\frac{(12e)^2a_1\varepsilon}{a_2\left(x_0\right)a_4\left(x_0,\varepsilon, k\right)\sqrt{k}}\right)^{2k}\sqrt{1+\frac{8a_0 L}{\varepsilon}\left(\frac{8\varepsilon}{e}\right)^{8k}\frac{(8k)!}{k^{8k-1}}}, \\
    \widetilde{R}_3\left(\varepsilon,k,x_0\right) \de \sqrt{13}\left(\frac{24e^2\varepsilon}{\pi a_2\left(x_0\right)a_4\left(x_0,\varepsilon,k\right)\sqrt{k}}\right)^{2k}\sqrt{1+\frac{8a_0 L}{\varepsilon}\left(\frac{8\varepsilon}{e^2}\right)^{4k}\frac{(4k)!}{k^{4k-1}}}, \\
    \widetilde{R}_4\left(\varepsilon,k,x_0\right) \de \sqrt{13}\left(\frac{6a_1\varepsilon}{a_2\left(x_0\right)a_4\left(x_0,\varepsilon,k\right)\sqrt{k}}\right)^{2k}\sqrt{1+\frac{8a_0 L}{\varepsilon}\left(8\varepsilon\right)^{4k}\frac{(4k)!}{k^{4k-1}}},
\end{gather*}
and 
\begin{gather}
a_0\de 1.5453, \nonumber \\
a_1 \de 13 + \frac{26}{10\pi} + \frac{13}{3\pi e}, \label{eq:a1} \\
a_2\left(x_0\right) \de \frac{13}{2} + \frac{18}{10\pi} + \frac{13}{6\pi e} + \frac{2}{\log{x_0}}\left(52+\frac{124}{10\pi}+\frac{52}{3\pi e}\right), \label{eq:a2} \\
a_3\left(x_0\right) \de \frac{3\pi a_1}{2} + \frac{139}{75} + \frac{62\pi a_1}{75\log{x_0}}, \label{eq:a3} \\
a_4\left(x_0,\varepsilon,k\right) \de 1 + \frac{\varepsilon}{k}\left(\frac{\log{2}}{\log{x_0}} + \frac{a_3\left(x_0\right)}{\pi a_2\left(x_0\right)}\right).  \label{eq:a4} \nonumber
\end{gather}
For fixed $k$ and $x_0$, each function $\widetilde{R}_n\left(\varepsilon,k,x_0\right)$ is  strictly increasing in $\varepsilon$. For fixed $\varepsilon$ and $x_0$, each function $\widetilde{R}_n\left(\varepsilon,k,x_0\right) \to 0$ as $k \to \infty$. In particular, $\widehat{C}\left(\varepsilon_1,\varepsilon_2,k,x_0\right)\leq\widehat{C}\left(\varepsilon'_1,\varepsilon'_2,k,x_0\right)$ if $\varepsilon_1\leq\varepsilon'_1$ and $\varepsilon_2\geq\varepsilon'_2$, and when $k \to \infty$
\[
\widehat{C}\left(\varepsilon_1,\varepsilon_2,k,x_0\right)\sim\frac{1}{6}\left(\frac{12a_2\left(x_0\right)a_4\left(x_0,\varepsilon_2,k\right)k}{\varepsilon_2}\right)^{2k}.
\]
\end{theorem}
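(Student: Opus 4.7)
The natural route is Selberg's method: derive an explicit formula expressing $\pi S(t)$ as a truncated Dirichlet polynomial over prime powers (up to length $x^{2}$ after Selberg's mollification) plus an error $\mathcal{E}(t,x)$ arising from deforming a contour around the critical strip. Setting $x\de T^{3\varepsilon/k}$, the contribution of first-power primes $p\le x$ contributes exactly $-\sum_{p\le x}\sin(t\log p)/\sqrt{p}$ after taking imaginary parts, and this matches the sum in (\ref{eq:mainS}). The remaining task is to $L^{2k}$-bound four residual pieces: the mollified tail over primes $p\in(x,x^{2}]$, the prime-square sum, the higher prime-power sum, and the contour error $\mathcal{E}(t,x)$. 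Minkowski's inequality in $L^{2k}[T,2T]$ reduces the problem to estimating each piece separately, and these four contributions will correspond (after Stirling) to $\widetilde{R}_{1},\ldots,\widetilde{R}_{4}$.

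For the three arithmetic pieces I would apply the standard second-moment estimate for Dirichlet polynomials: for $P(t)=\sum_{n\le N}a_{n}n^{-\ie t}$ with $N^{k}\le T^{1-\varepsilon}$,
\[
\int_{T}^{2T}|P(t)|^{2k}\dif{t}\le k!\,T\Bigl(\sum_{n}|a_{n}|^{2}\Bigr)^{k}(1+o(1)),
\]
which is precisely why $x$ is chosen to be $T^{3\varepsilon/k}$. The relevant $\ell^{2}$ norms reduce to evaluating $\sum_{p\le x^{2}}1/p$, $\sum_{p\le x^{2}}1/(p\log p)$, and $\sum_{p}\log p/p^{2}$ through Mertens-type estimates; working with an explicit floor $x\ge x_{0}\ge e^{16}$ is what yields the constants $a_{1}$, $a_{2}(x_{0})$, $a_{3}(x_{0})$, and the correction $a_{4}(x_{0},\varepsilon,k)$ defined in (\ref{eq:a1})--(\ref{eq:a4}). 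The factor $(12\,a_{2}a_{4}k/\varepsilon)^{2k}$ in (\ref{eq:Chat}) then comes out of repackaging the three moment bounds via Minkowski and using $k!\le(k/e)^{k}\sqrt{2\pi k}\cdot e$.

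The \textbf{main obstacle} is the contour error $\mathcal{E}(t,x)$. I would shift the line of integration slightly to the right of $\sigma=1/2$ so the vertical integrand is again a controllable Dirichlet series, handled by the moment estimate above; the horizontal pieces together with the residues at zeros in the strip are then bounded by applying (\ref{eq:szde}) integrated against $\sigma\in[1/2,1]$. This is where $a_{0}=1.5453$ and the factor $L$ enter, and where the factorial $(2k)!/k^{2k-1}$ arises after integrating the resulting Gaussian-type tails in $\sigma$ and applying Stirling. Keeping the constants genuinely explicit, in particular avoiding inflation through the fourfold Minkowski step, is what forces the intricate shape of the four $\widetilde{R}_{n}$.

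The concluding monotonicity and asymptotic claims require no separate argument. Each $\widetilde{R}_{n}(\varepsilon,k,x_{0})$ is a product of positive increasing functions of $\varepsilon$ for fixed $k,x_{0}$, hence increasing. For large $k$, each $\widetilde{R}_{n}$ carries a prefactor of shape $(c\varepsilon)^{2k}$ or $(c/\sqrt{k})^{2k}$ with $c$ independent of $k$; combined with $(2k)!/k^{2k-1}\sim k\sqrt{4\pi k}(4/e^{2})^{k}$, this decays super-geometrically once $\varepsilon\le 1/88$, giving $\widetilde{R}_{n}\to 0$. The claimed monotonicity in $(\varepsilon_{1},\varepsilon_{2})$ and the asymptotic equivalent of $\widehat{C}$ then follow by direct inspection of (\ref{eq:Chat}).
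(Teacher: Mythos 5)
Your plan follows essentially the same route as the paper: Selberg's mollified approximation to $S(t)$ (the paper's Lemma \ref{lem:explSel}), a decomposition into a main term plus residual Dirichlet polynomials over primes handled by explicit mean-value theorems, and the zero-density hypothesis \eqref{eq:szde} entering through integrals of powers of $\sigma_{x,t}-\tfrac{1}{2}$ (the paper's Lemma \ref{lem:crucial}), which is exactly where $a_0$, $L$ and the factorial factors arise. The only cosmetic differences are that the paper takes $x=T^{\varepsilon/k}$ with the mollification extending to $x^{3}$, splits the residual into five pieces $R_1,\dots,R_5$ combined via a H\"{o}lder-type inequality rather than Minkowski, and encodes the "contour error'' through the $t$-dependent abscissa $\sigma_{x,t}$ rather than a fixed shift; the substance is the same.
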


Selberg proved in \cite[Theorem 1]{SelbergContrib} that there exist $T_0>0$ and $L>0$ such that \eqref{eq:szde} is true for all $T\geq T_0$ and $\sigma\in[1/2,1]$. The next lemma provides an explicit version of his result, and improves Karatsuba and Korol\"{e}v's zero density estimate \cite[Theorem 1]{KaratsubaKorolevArgument} used in the proof of their version of Theorem \ref{thm:mainForS}.
\begin{lemma}
\label{lem:zde}
Let $T\geq \frac{1}{2}e^{1408}$ and $\sigma\in[1/2,1]$. Then the inequality \eqref{eq:szde} is true for $L\geq L_0\de 642.86$.
\end{lemma}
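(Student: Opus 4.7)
\bigskip

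\noindent\emph{Proof proposal for Lemma \ref{lem:zde}.} The plan is to follow Selberg's classical zero-detecting argument (as reworked in \cite{KaratsubaKorolevArgument}), but to track every constant carefully and insert the most recent explicit inputs where possible. The starting point is a mollified zeta-function of the form
\[
f(s)\de \zeta(s) M_X(s), \qquad M_X(s)\de \sum_{n\leq X}\frac{\mu(n)}{n^s}\psi\!\left(\frac{\log{(X/n)}}{\log{X}}\right),
\]
with a suitable smooth cutoff $\psi$ supported in $[0,1]$. For $X$ a small power of $T$ one has, by Mellin inversion, $f(s)=1-G(s)$, where $G(s)$ is expressible as a contour integral that one can express as a short sum of short Dirichlet polynomials plus an explicitly bounded remainder. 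If $\rho=\beta+\ie\gamma$ is a nontrivial zero with $\beta>\sigma\geq 1/2$ and $T\leq \gamma\leq 2T$, then $f(\rho)=0$ forces at least one of the Dirichlet polynomials to have absolute value $\gg 1$ at $s=\sigma+\ie\gamma$.

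The next step is a zero-detecting mean-value estimate. Here I would use a Halász--Montgomery type inequality (equivalently a fourth-moment bound) for Dirichlet polynomials
\[
P(t)\de \sum_{N<n\leq 2N}a_n n^{-\sigma-\ie t}
\]
to bound the number of well-spaced points $t=\gamma$ at which $|P(t)|$ exceeds a given threshold, in terms of $\int_T^{2T}|P(t)|^{2}\dif{t}$. The fourth-moment bound combined with the large-values bound from the previous paragraph yields $N(\sigma,2T)-N(\sigma,T)\ll T^{1-c(\sigma-1/2)}\log T$. Optimising the parameters $X$ and the dyadic block lengths $N$ produces the exponent $\tfrac14(\sigma-\tfrac12)$. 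To make the constant $L_0$ explicit one needs: (a) an explicit mean-value theorem for Dirichlet polynomials (e.g.\ a quantified version of Montgomery's theorem in the style of Ramar\'e's explicit work); (b) explicit estimates for the tails of $\mu(n)n^{-s}$; (c) an explicit Riemann--von Mangoldt formula, for which one may invoke Trudgian's bounds; and (d) an explicit bound for $\zeta(1/2+\ie t)$ on the critical line, which is available from Patel, Hiary, and others.

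Once these explicit inputs are in place, the residual task is bookkeeping: collect the contributions to the constant from each block of the large-values estimate, tune the free parameters (the length of the mollifier $X=T^\theta$, the threshold $V$, and the parameter $\kappa$ governing how many terms of $\zeta(s)M_X(s)-1$ one keeps) so as to balance the estimate across $\sigma\in[1/2,1]$, and verify that all error terms are absorbed for $T\geq \tfrac12 e^{1408}$. The reason a threshold as large as $e^{1408}$ appears is precisely that several logarithmic error terms in the explicit mean-value inequalities and the explicit prime-counting bounds used must be dominated by a fixed fraction of the main term; pushing $T$ large enough lets us set these fractions arbitrarily small.

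The main obstacle, as usual in this style of calculation, is not conceptual but combinatorial: arranging the many constants so that the worst case $\sigma=1/2$ (where the exponent is $1$) and the case $\sigma$ near $1$ (where the bound is most delicate) are simultaneously bounded by a single $L_0$. In practice I expect the dominant contribution to $L_0=642.86$ to come from the Halász--Montgomery constant in the fourth-moment step; any improvement on that constant would propagate directly into a smaller $L_0$, but (as the authors remark after the statement of Theorem \ref{thm:Main}) would not substantially change the final admissible $\lambda$ in Theorem \ref{thm:Main}.
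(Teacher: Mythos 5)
Your proposal is a roadmap for reproving Selberg's zero-density theorem explicitly from scratch, and as such it never actually arrives at the statement of the lemma: the constant $L_0=642.86$, the exponent $\tfrac14(\sigma-\tfrac12)$, and the threshold $T\geq\tfrac12 e^{1408}$ are all deferred to ``bookkeeping,'' which is precisely the content that would need to be supplied. Carrying out that programme is a substantial project in its own right --- it is essentially the content of the cited 41-page paper \cite{SimonicEZDE} --- so what you have written is a plan for a different (and much longer) paper rather than a proof of the lemma.

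The paper's actual proof is a short citation-and-comparison argument that splits $[1/2,1]$ at $\sigma=0.646$. On $[1/2,0.646]$ it quotes \cite[Theorem 1]{SimonicEZDE} verbatim, and this is where the constant $642.86$ originates (so your guess that $L_0$ is governed by a Halász--Montgomery constant in a fourth-moment step is not what happens here). On $[0.646,1]$ it quotes Ramar\'e's explicit density estimate as corrected in \cite{KLN}, namely $N(\sigma,T)\leq 965(3T)^{\frac83(1-\sigma)}\log^{5-2\sigma}T+51.5\log^2 T$, and then compares exponents: since $1-\tfrac14(\sigma-\tfrac12)-\tfrac83(1-\sigma)=\frac{58\sigma-37}{24}>0.0195$ for $\sigma\geq0.646$, and $8.2\log^3 T\leq T^{0.0195}$ once $T\geq\tfrac12 e^{1408}$, the Ramar\'e bound is absorbed into $L_0T^{1-\frac14(\sigma-\frac12)}\log T$. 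This also explains the threshold on $T$, which in your sketch is attributed vaguely to ``logarithmic error terms'' rather than to this concrete inequality. If you want to prove the lemma within the scope of this paper, the right move is to assemble existing explicit density estimates in this way, not to redo Selberg's argument.
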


\begin{proof}
By \cite[Theorem 1]{SimonicEZDE} the inequality \eqref{eq:szde} with $L\geq L_0$ is true for $T\geq 10^{23.75}$ and $\sigma\in[1/2,0.646]$. Let $\sigma\geq0.646$. Using Ramar\'{e}'s zero density estimate \cite{Ram16} as corrected in \cite[Equation 1.5]{KLN} guarantees that
\[
N(\sigma,T)\leq 965(3T)^{\frac{8}{3}(1-\sigma)}\log^{5-2\sigma}{T}+51.5\log^2{T}.
\]
From this we get
\begin{flalign*}
N(\sigma,2T)-N(\sigma,T)&\leq \left(965\cdot6^{0.944}+1\right)T^{\frac{8}{3}(1-\sigma)}\log^{3.708}{(2T)} \\
&\leq 8.2\cdot L_0T^{\frac{8}{3}(1-\sigma)}\log^{4}{T}.
\end{flalign*}
Observe that $\left(58\sigma-37\right)/24>0.0195$. The proof is now complete since $8.2\log^{3}{T}\leq T^{0.0195}$ is true for $T\geq \frac{1}{2}e^{1408}$.
\end{proof}

Lemma \ref{lem:zde} asserts that \eqref{eq:mainS} is true for $T\geq x_0^{k/\varepsilon}$, where we can set  $L=L_0$ in functions $\widetilde{R}_i$. We note that $L$ could be improved, at the very least by using the new height to which RH has been verified \cite{PT20} in the work of \cite{KLN}. Since the main contribution in \eqref{eq:Chat} is due to \eqref{eq:a2}, which comes from Lemma \ref{lem:explSel}, improvements upon $L$ would not give significantly better estimates for \eqref{eq:mainS}, not even on RH, i.e., $L=0$.

We now consider integrals similar to those in \eqref{eq:mainS}, except that now we allow small translations in the variable $t$. Both results are needed in the proof of Theorem \ref{thm:difference}.

\begin{corollary}
\label{cor:mainForS}
Let $0\leq h\leq 1$. Then, in the notation of Theorem \ref{thm:mainForS}, we have
\begin{equation}\label{ale}
\int_{T}^{2T}\left|S(t+h)+\frac{1}{\pi}\sum_{p\leq T^{\frac{3\varepsilon}{k}}}\frac{\sin{\left((t+h)\log{p}\right)}}{\sqrt{p}}\right|^{2k}\dif{t} \leq C'\left(\varepsilon,k,x_0\right)T,
\end{equation}
where
\begin{equation}
\label{eq:kk2}
C'\left(\varepsilon,k,x_0\right) \de \left(1+x_0^{-\frac{k}{\varepsilon}}\right)\widehat{C}\left(\varepsilon,\frac{\varepsilon}{1+\frac{\varepsilon x_0^{-k/\varepsilon}}{k\log{x_0}}},k,x_0\right).
\end{equation}
\end{corollary}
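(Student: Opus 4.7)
I would prove the corollary by a direct reduction to Theorem \ref{thm:mainForS} via the change of variables $u = t+h$. This turns the left-hand side of (\ref{ale}) into
\[
\int_{T+h}^{2T+h}\left|S(u)+\frac{1}{\pi}\sum_{p\leq T^{3\varepsilon/k}}\frac{\sin(u\log p)}{\sqrt{p}}\right|^{2k}du,
\]
and since $0\leq h\leq 1\leq T$ the dyadic interval $[T+h,2(T+h)]=[T+h,2T+2h]$ contains $[T+h,2T+h]$, so it suffices to bound the integral over the slightly larger dyadic range.

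To apply Theorem \ref{thm:mainForS} at base point $T+h$, I would choose the parameter $\varepsilon^{\ast}$ so that the prime truncations agree, namely $(T+h)^{3\varepsilon^{\ast}/k}=T^{3\varepsilon/k}$, which forces $\varepsilon^{\ast}=\varepsilon\log T/\log(T+h)\leq\varepsilon$. The hypotheses of Theorem \ref{thm:mainForS} are then easy to check: $\varepsilon^{\ast}\leq 1/88$ follows from $\varepsilon^{\ast}\leq\varepsilon$, while $T+h\geq x_0^{k/\varepsilon^{\ast}}$ is equivalent, after taking logarithms, to $\log T\geq(k/\varepsilon)\log x_0$, i.e.\ $T\geq x_0^{k/\varepsilon}$. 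Theorem \ref{thm:mainForS} then yields
\[
\int_{T+h}^{2T+2h}\left|S(u)+\frac{1}{\pi}\sum_{p\leq T^{3\varepsilon/k}}\frac{\sin(u\log p)}{\sqrt{p}}\right|^{2k}du\leq\widehat{C}\bigl(\varepsilon^{\ast},\varepsilon^{\ast},k,x_0\bigr)(T+h).
\]

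The remainder is parameter bookkeeping against the definition (\ref{eq:kk2}). For the prefactor, $h/T\leq 1/T\leq x_0^{-k/\varepsilon}$ gives $T+h\leq(1+x_0^{-k/\varepsilon})T$. For the arguments of $\widehat{C}$, the monotonicity statements in Theorem \ref{thm:mainForS} (increasing in $\varepsilon_1$, decreasing in $\varepsilon_2$) let me replace the first $\varepsilon^{\ast}$ by $\varepsilon$; to replace the second $\varepsilon^{\ast}$ by the exact expression in (\ref{eq:kk2}), I use $\log(1+h/T)\leq h/T\leq 1/T$ together with the lower bound $T\log T\geq(k\log x_0/\varepsilon)\,x_0^{k/\varepsilon}$ (from $T\geq x_0^{k/\varepsilon}$) to obtain
\[
\varepsilon^{\ast}=\frac{\varepsilon}{1+\log(1+h/T)/\log T}\geq\frac{\varepsilon}{1+\varepsilon x_0^{-k/\varepsilon}/(k\log x_0)}.
\]

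The only real obstacle is cosmetic parameter-tracking: the truncation $p\leq T^{3\varepsilon/k}$ in the corollary's integrand does not match the truncation produced by Theorem \ref{thm:mainForS} at base point $T+h$ unless one also perturbs $\varepsilon$, after which the shifted $\varepsilon^{\ast}$ must play two distinct roles---an exact equality to align the truncations, and a one-sided bound to feed the monotone constant. The two-argument form of $\widehat{C}$ in (\ref{eq:Chat}) is evidently designed to keep these separate, which is precisely why the final bound packages so cleanly as $C'(\varepsilon,k,x_0)T$.
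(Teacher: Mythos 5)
Your proposal is correct and follows essentially the same route as the paper: shift the variable, enlarge to the dyadic interval $[T+h,2(T+h)]$, apply Theorem \ref{thm:mainForS} with the perturbed parameter $\varepsilon'=\varepsilon\log T/\log(T+h)$ chosen so the prime truncations match, and then use the monotonicity of $\widehat{C}$ in its two $\varepsilon$-arguments together with $T\geq x_0^{k/\varepsilon}$ to recover the stated $C'$. Your explicit verification of $\varepsilon'\geq\varepsilon/\bigl(1+\varepsilon x_0^{-k/\varepsilon}/(k\log x_0)\bigr)$ is exactly the bound the paper asserts without detail.
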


\begin{proof}
The integral in (\ref{ale}) is not greater than
\[
\int_{T+h}^{2(T+h)}\left|S(t)+\frac{1}{\pi}\sum_{p\leq (T+h)^{\frac{3\varepsilon'}{k}}}\frac{\sin{\left(t\log{p}\right)}}{\sqrt{p}}\right|^{2k}\dif{t},
\]
where $\varepsilon'\de \varepsilon\log{T}/\log{(T+h)}$. By Theorem \ref{thm:mainForS}, this integral is bounded above by $(T+1)C\left(\varepsilon',k,x_0\right)$ for $0<\varepsilon'\leq1/88$ and $x_0\geq e^{16}$ since $T+h\geq \max\left\{x_0^{k/\varepsilon'},2T_0\right\}$. Observe that $\varepsilon\geq\varepsilon'\geq\varepsilon/\left(1+\varepsilon x_0^{-k/\varepsilon}/\left(k\log{x_0}\right)\right)$. Now the result follows from the last part of Theorem \ref{thm:mainForS}.
\end{proof}

Selberg investigated \eqref{eq:mainS} in \cite[Equation 5.3]{SelbergContrib}, for $\varepsilon=\left(a-1/2\right)/20$ where the integration goes from $T$ to $T+H$ with $T^a\leq H\leq T$ and $1/2<a\leq 1$. The motivation for this was to obtain lower bounds for a number of sign changes of $S(t)$ on a given interval, as well as to study $\Omega$-results for $S(t)$ and $S_1(t)$. Selberg did not provide the bound $O(H)$ uniformly in $k$. Ghosh in \cite[Lemma 5]{GhoshSign} carefully examined Selberg's proof and obtained an estimate $\ll(Ak)^{4k}H$. Tsang in his thesis \cite{TsangThesis} demonstrated that better bound $\ll(Ak)^{2k}H$ is possible, by refining \cite[Lemma 2]{GhoshSign}. In fact, the proof is the same as that of Lemma 12 in \cite{SelbergContrib}, see also Lemma \ref{lem:crucial} below.

Very little work was done in the explicit setting. Karatsuba and Korol\"{e}v provide in \cite[Lemma 7 on p.~440]{KaratKor} the estimate
\[
\int_{T}^{T+H} \left|S(t)+\frac{1}{\pi}\sum_{p< x}\frac{\sin{\left(t\log{p}\right)}}{\sqrt{p}}\right|^{2k}\dif{t} \leq \left(\frac{e^{37}k^2}{\varepsilon^{3}\pi^2}\right)^{k}H,
\]
where $0<\varepsilon<10^{-3}$, $H=T^{\frac{27}{82}+\varepsilon}$, $1\leq k<\left(\varepsilon\log{T}\right)/1920$, $T^{\varepsilon/(40k)}<x\leq T^{\varepsilon/(10k)}$, and $T\geq T_0$ for sufficiently large $T_0$. Summing up to $2T$, their result implies that we can take
\[
\left(1+\varepsilon\right)\left(\frac{e^{37}k^2}{(30\varepsilon)^{3}\pi^{2}}\right)^k
\]
instead of $C(\varepsilon,k)$ in Theorem \ref{thm:mainForS}, where $0<\varepsilon<\frac{1}{3}10^{-4}$ and $1\leq k<\left(\varepsilon\log{T}\right)/640$. For $k=1$ this bound is at least $10^{24}$, and for $k=2$ it is at least $10^{49}$.

We can observe that, for fixed $k$ and $x_0$, the function $C\left(\varepsilon,k,x_0\right)$ decreases in $\varepsilon$. This means that we would obtain the best bounds by choosing $\varepsilon=1/88$.
We also choose $x_0$ sufficiently large such that the resulting bounds are close to their limiting values. This is not an issue because for Corollary \ref{cor:mainForS} we need only bounds for sufficiently large $T$. Observe also that our values are much smaller than what Karatsuba--Korol\"{e}v's estimates give.

\begin{corollary}
\label{cor:mainValues1}
Let $\varepsilon=1/88$ and $x_0=e^{2\cdot 10^4}$. Then we have $C\left(\varepsilon,1,x_0\right)<1.44161\cdot10^{11}$ and $C\left(\varepsilon,2,x_0\right)<2.69927\cdot10^{21}$, with the same values also for $C'$. Define $\omega_0\de e^{12.8471}$. For all positive integers $k$ we have $C\left(\varepsilon,k,x_0\right)\leq\left(\omega_0k\right)^{2k}$, with the same bound also for $C'$.
\end{corollary}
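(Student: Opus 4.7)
The argument is a numerical verification unpacking the explicit formulas in Theorem \ref{thm:mainForS} with $\varepsilon=1/88$, $x_0=e^{2\cdot 10^4}$, and $L=L_0=642.86$ (Lemma \ref{lem:zde}). I plan to carry this out in three steps.

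First, because $\log x_0=2\cdot 10^4$, the tail corrections in \eqref{eq:a2} and \eqref{eq:a3} are of size $10^{-3}$, so the constants $a_0,a_1,a_2(x_0),a_3(x_0)$ reduce to fixed decimals computable to arbitrary precision; numerically $a_2(x_0)\approx 7.3325$ and $a_3(x_0)/(\pi a_2(x_0))\approx 3.0135$. Substituting $\varepsilon=1/88$ and $k\in\{1,2\}$ into \eqref{eq:a4} gives $a_4$, and substituting these values into the definitions of the $\widetilde{R}_n$ and then into \eqref{eq:Chat} yields $C(\varepsilon,k,x_0)$. The dominant contribution to $\sum_{n=1}^{4}\widetilde{R}_n$ comes from the leading term $(8a_0 L_0/\varepsilon)(8\varepsilon)^{2k}(2k)!/k^{2k-1}$ of $\widetilde{R}_1$ at $k=1$ and from $\widetilde{R}_2$ at $k=2$; the remaining terms carry high negative powers of $e$ and contribute negligibly. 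The resulting arithmetic yields the two stated numerical bounds.

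Second, to pass from $C$ to $C'$ I apply \eqref{eq:kk2}. Since $x_0^{-k/\varepsilon}=\exp(-1.76\cdot 10^{6}\,k)$ is astronomically small, both the prefactor $(1+x_0^{-k/\varepsilon})$ and the shift of the second argument of $\widehat{C}$ produce only a negligible perturbation. By the monotonicity statement at the end of Theorem \ref{thm:mainForS} this perturbation can only increase $\widehat{C}$, yet by far less than the rounding tolerance of either the specific values for $k=1,2$ or the uniform bound $(\omega_0 k)^{2k}$.

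Third, the uniform bound is equivalent to
\begin{equation*}
\frac{1+\sum_{n=1}^{4}\widetilde{R}_n(\varepsilon,k,x_0)}{6}\le\Bigl(\frac{\omega_0\,\varepsilon}{12\,a_2(x_0)\,a_4(x_0,\varepsilon,k)}\Bigr)^{2k}.
\end{equation*}
The factor $a_4(x_0,\varepsilon,k)$ is monotone decreasing in $k$ with maximum value $a_4(x_0,\varepsilon,1)\approx 1.0342$, so the base on the right exceeds $\omega_0/8008.3>47$ for every $k\ge 1$. Each $\widetilde{R}_n$ with $n\ge 2$ is of the shape $(B_n/\sqrt{k})^{2k}$ times a bounded factor, with $B_n\le 23$ (the maximum is attained by $B_2=(12e)^{2}a_1\varepsilon/(a_2 a_4)\approx 22.86$), while the leading term of $\widetilde{R}_1$ decays super-exponentially in $k$ by Stirling, being dominated by $(16\varepsilon/e)^{2k}\approx 0.067^{2k}$. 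Hence the ratio of left to right sides decays exponentially in $k$, and the inequality is tight only at $k=1$, where it forces the numerical value $12.8471$. The main obstacle is precisely this tight verification: the combined contribution of the leading part of $\widetilde{R}_1$ together with $\widetilde{R}_2(\varepsilon,1,x_0)$ must stay just below $6(\omega_0\varepsilon/(12\,a_2 a_4))^{2}-1$, and since the former scales linearly with the zero-density constant $L_0$ of Lemma \ref{lem:zde}, any substantial sharpening of $\omega_0$ would have to begin with an improvement of that constant.
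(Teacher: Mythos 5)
Your proposal is correct and matches the paper's (implicit) proof: the corollary is obtained exactly as you describe, by substituting $\varepsilon=1/88$, $x_0=e^{2\cdot10^4}$ and $L=L_0=642.86$ into the formulas of Theorem \ref{thm:mainForS} and Corollary \ref{cor:mainForS} and evaluating numerically, with the uniform bound reduced to the inequality you state and verified via the decay of the $\widetilde{R}_n$ in $k$, tight only at $k=1$. One tiny correction: since $a_4\left(x_0,\varepsilon,k\right)\to1$ as $k\to\infty$, the constant $B_2$ increases to $(12e)^2a_1\varepsilon/a_2\left(x_0\right)\approx23.6$ rather than staying below $23$, but this does not affect the conclusion because $23.6^2$ remains far below $\left(\omega_0\varepsilon/\left(12a_2\left(x_0\right)a_4\right)\right)^2\approx47.4^2$.
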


Before proceeding to the proof of Theorem \ref{thm:mainForS}, we need some preliminary definitions and results. Let $2\leq x\leq t^2$ and $n$ be a positive integer. Define
\[
\sigma_{x,t}\de \frac{1}{2} + 2\mathop{\max}_{\substack{\beta\\|t-\gamma|\leq x^{\frac{3\left|\beta-\frac{1}{2}\right|}{\log{x}}}}}\left\{\left|\beta-\frac{1}{2}\right|,\frac{1}{\log{x}}\right\},
\]
and
\[
\Lambda_x(n) \de \left\{
\begin{array}{ll}
    \Lambda(n), & 1\leq n\leq x, \\
    \Lambda(n)\frac{\log^2{\left(x^3/n\right)}-2\log^2{\left(x^2/n\right)}}{2\log^2{x}}, & x< n\leq x^2, \\
    \Lambda(n)\frac{\log^2{\left(x^3/n\right)}}{2\log^2{x}}, & x^2<n\leq x^3.
\end{array}
\right.
\]
Observe that $\Lambda_x(n)\leq\Lambda(n)$. Define also
\[
r(x,t) \de \sum_{n\leq x^3}\frac{\Lambda_x(n)}{n^{\sigma_{x,t}+\ie t}}.
\]
The following lemma is an explicit version of Selberg's approximation formula for $S(t)$, see \cite[Theorem 2]{SelbergContrib}.

\begin{lemma}
\label{lem:explSel}
Let $e^{16}\leq x_0\leq x\leq t^2$. Then
\[
\left|S(t)+\frac{1}{\pi}\sum_{n\leq x^3}\frac{\Lambda_x(n)\sin{\left(t\log{n}\right)}}{n^{\sigma_{x,t}}\log{n}}\right| \leq \left(\sigma_{x,t}-\frac{1}{2}\right)\left(a_1\left|r(x,t)\right|+a_2\left(x_0\right)\log{|t|}\right),
\]
where $a_1$ and $a_2\left(x_0\right)$ are defined by \eqref{eq:a1} and \eqref{eq:a2}, respectively.
\end{lemma}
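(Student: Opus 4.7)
The plan is to follow Selberg's derivation of \cite[Theorem 2]{SelbergContrib}, but tracking every constant explicitly. Start from Selberg's identity
\[
\log\zeta(s) = \sum_{n \le x^3}\frac{\Lambda_x(n)}{n^s \log n} + E(s,x),
\]
valid (by analytic continuation from $\Re s > 1$) for $\Re s > 1/2$ outside zeros, where $E(s,x)$ packages a sum of residue-like contributions at the zeros $\rho$ of $\zeta$ together with a piece from the pole at $s=1$. This identity is obtained from $\frac{1}{2\pi\ie}\int_{(c)}\log\zeta(s+w)K(w)\,dw$ with kernel $K(w) = (x^{3w} - 2x^{2w} + x^w)/(w^2\log^2 x)$, whose Dirichlet-series expansion generates precisely the weights $\Lambda_x(n)$.

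I would apply the identity at $s_0 \de \sigma_{x,t} + \ie t$. The definition of $\sigma_{x,t}$ is engineered so that every zero $\rho = \beta + \ie\gamma$ with $|t-\gamma| \le x^{3|\beta - 1/2|/\log x}$ (the ``bad'' zeros) satisfies $\beta - 1/2 \le (\sigma_{x,t}-1/2)/2$, so $s_0$ sits safely to their right, while zeros outside this window contribute a negligibly small kernel value. Taking imaginary parts yields
\[
\Im\log\zeta(s_0) = -\sum_{n \le x^3}\frac{\Lambda_x(n)\sin(t\log n)}{n^{\sigma_{x,t}}\log n} + \Im E(s_0,x),
\]
and the horizontal contour identity $\pi S(t) = \Im\log\zeta(s_0) - \int_{1/2}^{\sigma_{x,t}}\Im(\zeta'/\zeta)(\sigma+\ie t)\,d\sigma$ produces the universal prefactor $(\sigma_{x,t}-1/2)$ upon bounding the integrand.

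To bound the integrand I would use the Hadamard partial-fraction expansion $(\zeta'/\zeta)(s) = \sum_\rho 1/(s-\rho) + O(\log|t|)$ in its explicit form. By the construction of $\sigma_{x,t}$, every bad zero lies at horizontal distance at least $(\sigma_{x,t}-1/2)/2$ from the integration segment, and the local sum $\sum_\rho 1/(s_0-\rho)$ is controlled via a second application of Selberg's identity, giving a bound in terms of $|r(x,t)|$ and thus the $a_1|r(x,t)|$ piece. The $a_2(x_0)\log|t|$ piece absorbs three separate items: the archimedean $O(\log|t|)$ from the Hadamard product, the Riemann--von Mangoldt count of zeros in a unit vertical interval, and the truncation loss between $\Lambda(n)$ and $\Lambda_x(n)$ in the range $x<n\le x^3$. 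The $2/\log x_0$ tail appearing in $a_2(x_0)$ records exactly this last loss, and shrinks as $x_0$ grows.

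The main obstacle is purely bookkeeping. Each estimate --- explicit bounds on $|\log\zeta(\sigma + \ie t)|$ in the critical strip, on $N(t+1)-N(t)$, on the contour shift in Selberg's kernel representation, and on the residual sum of $1/(s-\rho)$ once the bad zeros are isolated --- must be carried through with numerical constants, and then combined so that the error collapses into the clean form $(\sigma_{x,t}-1/2)(a_1|r(x,t)|+a_2(x_0)\log|t|)$. The hypothesis $x \ge x_0 \ge e^{16}$ serves only to make every logarithmic-loss term comfortably absorbable into the final constants $a_0, a_1, a_2(x_0)$.
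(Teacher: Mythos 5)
Your outline is precisely the Selberg--Karatsuba--Korol\"{e}v argument that the paper itself relies on: its entire proof of this lemma is the one-line citation ``this follows from the proof of Theorem 1 in \cite{KaratsubaKorolevArgument}'', and your kernel identity for $\log\zeta$, the evaluation at $\sigma_{x,t}+\ie t$ (which is exactly what produces the weight $n^{-\sigma_{x,t}}$ in the Dirichlet polynomial), and the bound $\left|\int_{1/2}^{\sigma_{x,t}}\Im(\zeta'/\zeta)(\sigma+\ie t)\,\dif{\sigma}\right|\leq(\sigma_{x,t}-\tfrac12)\sup|\cdot|$ are the steps of that proof. Since the actual content of the lemma is the explicit numerical form of $a_1$ and $a_2(x_0)$, which you (like the paper) leave to the bookkeeping carried out in \cite{KaratsubaKorolevArgument}, the two arguments coincide and there is nothing further to compare.
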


\begin{proof}
This follows from the proof of Theorem 1 in \cite{KaratsubaKorolevArgument}.
\end{proof}

Setting $x_0=e^{16}$ in Lemma \ref{lem:explSel} provides bounds $a_1<15$ and $a_2\left(x_0\right)<15$. With the value 15, this is original statement of Theorem 1 in \cite{KaratsubaKorolevArgument}, but our formulation provides slightly better bounds in Corollary \ref{cor:mainValues1}.

\begin{lemma}
\label{lem:crucial}
Let $x\geq2$, $T\geq2T_0\geq e^{1408}$, $1\leq\xi\leq x^{4k}$ and $x^3\xi^2\leq T^{1/8}$. Then
\begin{equation}\label{stout}
\int_{T}^{2T} \left(\sigma_{x,t}-\frac{1}{2}\right)^{\nu}\xi^{\sigma_{x,t}-\frac{1}{2}}\dif{t} \leq \mathcal{C}\left(x,T,\xi,\nu\right)\frac{T}{\log^{\nu}{x}},
\end{equation}
with
\begin{equation}
\label{eq:C4}
\mathcal{C}\left(x,T,\xi,\nu\right) \de 2^{\nu}\xi^{\frac{2}{\log{x}}}+La_0 2^{4\nu+3}\nu!\left(\frac{\log{x}}{\log{T}}\right)^{\nu}\frac{\log{T}}{\log{x}},
\end{equation}
where $L$ and $a_0$ are from Theorem \ref{thm:mainForS}, and $\nu$ is a positive integer.
\end{lemma}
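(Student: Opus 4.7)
The plan is to dissect the integral according to whether the maximum in the definition of $\sigma_{x,t}$ is attained by $1/\log{x}$ or by an off-critical-line zero. Writing $\alpha(t)\de\sigma_{x,t}-1/2$, we always have $\alpha(t)\geq 2/\log{x}$, with equality precisely when no nontrivial zero $\rho=\beta+\ie\gamma$ with $|\beta-1/2|>1/\log{x}$ satisfies $|t-\gamma|\leq x^{3|\beta-1/2|/\log{x}}=e^{3|\beta-1/2|}$. Let $G\subseteq[T,2T]$ denote the set where equality holds, and $B\de [T,2T]\setminus G$. On $G$ the integrand is the constant $(2/\log{x})^{\nu}\xi^{2/\log{x}}$, so that part contributes at most $2^\nu \xi^{2/\log x}\cdot T/\log^\nu x$, which matches the first summand of $\mathcal{C}$.

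For $t\in B$, assign $t$ to any zero $\rho$ realizing the maximum; then $t$ lies in an interval of length $2e^{3\eta}$ around $\gamma$ (where $\eta\de|\beta-1/2|$), and the integrand there equals $(2\eta)^{\nu}\xi^{2\eta}$. Using the symmetry $\rho\mapsto 1-\bar\rho$ coming from the functional equation to restrict to $\beta>1/2$ at the cost of a factor of two, one obtains
\[
\int_{B}\alpha(t)^{\nu}\xi^{\alpha(t)}\,\dif{t} \leq 4\sum_{\rho} F(\eta_{\rho}),
\]
where $F(\eta)\de(2\eta)^{\nu}\xi^{2\eta}e^{3\eta}$ and the sum runs over zeros with $1/\log{x}<\eta\leq 1/2$ and $\gamma\in[T-e^{3/2},2T+e^{3/2}]$.

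Abel summation against the non-increasing counting function $M(\eta)\de N(1/2+\eta,2T+e^{3/2})-N(1/2+\eta,T-e^{3/2})$, combined with the monotonicity of $F$, converts this into
\[
F(1/\log{x})M(1/\log{x})+\int_{1/\log{x}}^{1/2}F'(\eta)M(\eta)\,\dif{\eta}.
\]
Applying the zero-density estimate (\ref{eq:szde}) gives $M(\eta)\ll L\,T^{1-\eta/4}\log T$ (with the implicit constant absorbed into $a_0$, accounting for the slight enlargement of the $\gamma$-interval), while the hypotheses $\xi\leq x^{4k}$ and $x^3\xi^2\leq T^{1/8}$ force $\xi^{2\eta}e^{3\eta}T^{-\eta/4}\leq T^{-\eta/8}$ with further rapid decay in $\log{x}$. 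The remaining integral becomes of Gamma-type, and the substitution $u=\eta(\log T)/8$ produces the factor $\nu!$ together with the required powers of $\log T$ and $\log{x}$ appearing in the second summand of (\ref{eq:C4}).

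The main obstacle will be the careful bookkeeping of the explicit constant $a_0\cdot 2^{4\nu+3}$. The logarithmic derivative $F'(\eta)/F(\eta)=\nu/\eta+2\log{\xi}+3$ splits the integral into three pieces, each of which must be controlled against the Gamma-type decay; the $\log{\xi}$ piece (with $\log\xi$ as large as $(\log{T})/16$) is expected to be dominant, and the boundary term $F(1/\log x)M(1/\log x)$ must be absorbed into the same summand. Verifying that the enlargement of the $\gamma$-interval changes the zero-density constant only by the explicit factor $a_0=1.5453$, rather than by something depending on $T$ or $x$, is the final technical step that locks in the stated bound.
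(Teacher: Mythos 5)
Your overall architecture---splitting $[T,2T]$ according to whether $\sigma_{x,t}-\tfrac12$ equals $2/\log x$, covering the exceptional set by per-zero intervals, halving by the symmetry $\rho\mapsto1-\overline{\rho}$, and converting the resulting sum over zeros into a Gamma-type integral against the density hypothesis \eqref{eq:szde}---is exactly that of the paper's proof, and your first summand $2^{\nu}\xi^{2/\log x}T/\log^{\nu}x$ is handled identically. The gap is in the exceptional set. You read the condition in the definition of $\sigma_{x,t}$ as $|t-\gamma|\le x^{3|\beta-1/2|/\log x}=e^{3|\beta-1/2|}$, so each zero contributes an interval of length $2e^{3\eta}=O(1)$ and your weight is $F(\eta)=(2\eta)^{\nu}\xi^{2\eta}e^{3\eta}$. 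The proof actually requires (and the paper's argument, following Selberg and Karatsuba--Korol\"ev, uses; the displayed definition is a misprint) the condition $|t-\gamma|\le x^{3|\beta-1/2|}/\log x$, so each interval has length $2x^{3\eta}/\log x$ and the weight is $\tfrac{2}{\log x}(2\eta)^{\nu}\left(x^{3}\xi^{2}\right)^{\eta}$. This is not cosmetic: the factor $1/\log x$ coming from the interval measure is precisely the single $\log T/\log x$ (as opposed to $\log T$) in the second summand of \eqref{eq:C4}, and the $x^{3\eta}$ is what the hypothesis $x^{3}\xi^{2}\le T^{1/8}$ is designed to pair with. Running your version, the dominant piece $2\log\xi\cdot F(\eta)\le\tfrac18\log T\cdot F(\eta)$ against the Gamma integral produces roughly $La_{0}2^{4\nu+2}\nu!\,T/\log^{\nu-1}T$, which exceeds the target $La_{0}2^{4\nu+3}\nu!\,T\log T/(\log x\,\log^{\nu}T)$ by a factor of order $\log x$; since every application takes $x=T^{\varepsilon/k}$, that is a loss of a full factor of $\log T$, so the stated $\mathcal{C}(x,T,\xi,\nu)$ is not attained.

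A second, repairable, defect: the boundary term $F(1/\log x)M(1/\log x)$ in your Abel summation cannot be absorbed as you assert. One has $M(1/\log x)\le La_{0}T^{1-1/(4\log x)}\log T$, and in the regime $x=T^{\varepsilon/k}$ the factor $T^{-1/(4\log x)}=e^{-k/(4\varepsilon)}$ is just a constant, so this term is of order $T\log T/\log^{\nu}x$ and beats both summands of $\mathcal{C}(x,T,\xi,\nu)\,T/\log^{\nu}x$ by a factor of $\log T$. The fix is what the paper does implicitly (via the partial summation on p.~438 of Karatsuba--Korol\"ev): sum over all zeros with $\beta>1/2$ and integrate by parts from $\eta=0$, where the boundary contribution vanishes because $F(0)=0$ for $\nu\ge1$. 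With the corrected interval length and this adjustment, your argument becomes the paper's proof.
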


\begin{proof}
Let $\mathscr{A}_1\subseteq(T,2T]$ be a set of those $t$ for which $|\beta-1/2|\leq 1/\log{x}$ for all the nontrivial zeroes $\rho=\beta+\ie\gamma$ that satisfy $|t-\gamma|\leq x^{3|\beta-1/2|}/\log{x}$. Define $\mathscr{A}_2\de (T,2T]\setminus\mathscr{A}_1$. Denote by $\mathcal{I}_1$ and $\mathcal{I}_2$ the integration of the integrand in (\ref{stout}) through the sets $\mathscr{A}_1$ and $\mathscr{A}_2$, respectively, whence the integral in (\ref{stout}) is $\mathcal{I}_1+\mathcal{I}_2$.

If $t\in\mathscr{A}_1$, then $\sigma_{x,t}-1/2=2/\log{x}$. This implies that
\[
\mathcal{I}_1 \leq 2^{\nu}\xi^{\frac{2}{\log{x}}}\frac{T}{\log^{\nu}{x}}.
\]

If $t\in\mathscr{A}_2$, then there exists a zero $\rho$ with $|t-\gamma|\leq x^{3|\beta-1/2|}/\log{x}$ and
\[
\left(\sigma_{x,t}-\frac{1}{2}\right)^{\nu}\xi^{\sigma_{x,t}-\frac{1}{2}}=2^{\nu}|\beta-1/2|^{\nu}\xi^{2|\beta-1/2|}.
\]
Since $x^{3|\beta-1/2|}\leq x^{3/2}\leq T^{1/16}$, it means that $\gamma\in\left(a,b\right]$, where $a\de T-T^{1/16}$ and $b\de 2T+T^{1/16}$. Therefore, we obtain
\begin{flalign*}
\mathcal{I}_2 &\leq \sum_{a<\gamma\leq b}\int_{\gamma-\frac{x^{3\left|\beta-\frac{1}{2}\right|}}{\log{x}}}^{\gamma+\frac{x^{3\left|\beta-\frac{1}{2}\right|}}{\log{x}}} 2^{\nu}\left|\beta-\frac{1}{2}\right|^{\nu}\xi^{2\left|\beta-\frac{1}{2}\right|}\dif{t} \leq \frac{2^{\nu+2}}{\log{x}} \mathop{\sum}_{\substack{a<\gamma\leq b \\\beta>1/2}} \left(\beta-\frac{1}{2}\right)^{\nu}\left(x^3\xi^2\right)^{\beta-\frac{1}{2}} \\
&\leq \frac{2^{\nu+2}}{8\log{x}}\int_{\frac{1}{2}}^{1}\left(\left(\sigma-\frac{1}{2}\right)^{\nu}\log{T}+8\nu\left(\sigma-\frac{1}{2}\right)^{\nu-1}\right)\times \\
&\times\left(N(\sigma,b)-N(\sigma,a)\right)T^{\frac{1}{8}\left(\sigma-\frac{1}{2}\right)}\dif{\sigma},
\end{flalign*}
where the reader is advised to consult \cite[p.~438]{KaratKor} for details on the derivation of the third inequality. Further,
\[
N(\sigma,b)-N(\sigma,a) \leq \left(N\left(\sigma,4\alpha_1 T\right)-N\left(\sigma,2\alpha_1 T\right)\right) + \left(N\left(\sigma, 2\alpha_1 T\right)-N\left(\sigma,\alpha_1 T\right)\right),
\]
where $\alpha_1\de\frac{1}{2}+\frac{1}{4}e^{-1320}$. Because $\alpha_1T\geq T_0\geq\frac{1}{2}e^{1408}$, we have in the notation of Theorem \ref{thm:mainForS} the following estimate
\begin{flalign*}
N(\sigma,b)-N(\sigma,a) &\leq L\left(\alpha_1^{\frac{7}{8}}+2\alpha_1\right)\left(1+\frac{\log{(2\alpha_1)}}{1408}\right)T^{1-\frac{1}{4}\left(\sigma-\frac{1}{2}\right)}\log{T} \\
&\leq La_0 T^{1-\frac{1}{4}\left(\sigma-\frac{1}{2}\right)}\log{T},
\end{flalign*}
valid for $\sigma\in[1/2,1]$. Applying this bound to the last inequality for $\mathcal{I}_2$, making a change of variable $u\mapsto\sigma-1/2$ and then integrating, we obtain
\[
\mathcal{I}_2 \leq La_0\frac{2^{\nu+2}}{8\log{x}}T\log{T}\int_{0}^{\infty}\frac{u^{\nu}\log{T}+8\nu u^{\nu-1}}{T^{\frac{1}{8}u}}\dif{u} = La_0\frac{\log{T}}{\log{x}}2^{4\nu+3}\nu!\frac{T}{\log^{\nu}{T}}.
\]
Now the result easily follows.
\end{proof}

Let $e^{16}\leq x_0\leq x\leq t^2$. By Lemma \ref{lem:explSel} we have
\[
\left|\pi S(t)+\sum_{p\leq x^3}\frac{\sin{\left(t\log{p}\right)}}{\sqrt{p}}\right| \leq a_2\left(x_0\right)\pi\left(\sigma_{x,t}-\frac{1}{2}\right)\log{|t|} + \sum_{n=1}^6 \left|A_n\right|,
\]
where
\begin{gather*}
    A_1 \de \sum_{p\leq x^3}\frac{\Lambda(p)-\Lambda_x(p)}{\sqrt{p}\log{p}}\sin{\left(t\log{p}\right)}, \\
    A_2 \de \sum_{p\leq x^3} \frac{\Lambda_x(p)}{\sqrt{p}\log{p}}\left(1-p^{\frac{1}{2}-\sigma_{x,t}}\right)\sin{\left(t\log{p}\right)}, \\
    A_3 \de \frac{1}{2}\sum_{p\leq x^{3/2}} \frac{\Lambda_x\left(p^2\right)}{p\log{p}}\left(1-p^{1-2\sigma_{x,t}}\right)\sin{\left(2t\log{p}\right)},
\end{gather*}
\begin{gather*}
    A_4 \de -\frac{1}{2}\sum_{p\leq x^{3/2}} \frac{\Lambda_x\left(p^2\right)}{p\log{p}}\sin{\left(2t\log{p}\right)}, \\
    A_5 \de -\sum_{r\geq3}\sum_{p^r<x^3} \frac{\Lambda_x\left(p^r\right)}{p^{r\sigma_{x,t}}r\log{p}}\sin{\left(rt\log{p}\right)}, \\
    A_6 \de a_1\pi\left(\sigma_{x,t}-\frac{1}{2}\right)\left|\sum_{n\leq x^3}\frac{\Lambda_x(n)}{n^{\sigma_{x,t}+\ie t}}\right|.
\end{gather*}
We trivially have
\begin{equation}
\label{eq:R1}
\left|A_1\right| \leq R_1(x,t)\de \left|\sum_{p\leq x^3}\frac{\Lambda(p)-\Lambda_x(p)}{\sqrt{p}\log{p}}p^{\ie t}\right|.
\end{equation}
Next,
\begin{flalign}
\left|A_2\right| &\leq \left|\sum_{p\leq x^3} \frac{\Lambda_x(p)}{\sqrt{p}\log{p}}\left(1-p^{\frac{1}{2}-\sigma_{x,t}}\right)p^{\ie t}\right| \nonumber \\
&\leq \int_{\frac{1}{2}}^{\sigma_{x,t}}\left|\sum_{p\leq x^3}\frac{\Lambda_x(p)}{p^{\sigma'-\ie t}}\right|\dif{\sigma'} = \int_{\frac{1}{2}}^{\sigma_{x,t}}\left|x^{\sigma'-\frac{1}{2}}\int_{\sigma'}^{\infty}x^{\frac{1}{2}-\sigma}\sum_{p\leq x^3}\frac{\Lambda_x(p)\log{(xp)}}{p^{\sigma-\ie t}}\dif{\sigma}\right|\dif{\sigma'} \nonumber \\
&\leq \int_{\frac{1}{2}}^{\sigma_{x,t}}x^{\sigma'-\frac{1}{2}}\int_{\sigma'}^{\infty}x^{\frac{1}{2}-\sigma}\left|\sum_{p\leq x^3}\frac{\Lambda_x(p)\log{(xp)}}{p^{\sigma-\ie t}}\right|\dif{\sigma}\dif{\sigma'} \nonumber \\
&\leq \left(\sigma_{x,t}-\frac{1}{2}\right)x^{\sigma_{x,t}-\frac{1}{2}}\int_{\frac{1}{2}}^{\infty} x^{\frac{1}{2}-\sigma}\left|\sum_{p\leq x^3}\frac{\Lambda_x(p)\log{(xp)}}{p^{\sigma-\ie t}}\right|\dif{\sigma}\ed R_2(x,t). \label{eq:R2}
\end{flalign}
Also
\[
\left|A_3\right|\leq \frac{1}{2}\sum_{p\leq x^{3/2}} \frac{1-p^{1-2\sigma_{x,t}}}{p} \leq \frac{3}{2}\left(\sigma_{x,t}-\frac{1}{2}\right)\log{x},
\]
where we used $1-p^{1-2\sigma_{x,t}}\leq 2\left(\sigma_{x,t}-1/2\right)\log{p}$, and
\begin{equation}\label{desk}
\sum_{p\leq X}\frac{\log{p}}{p}\leq \log{X}
\end{equation}
for $X>1$, see \cite[Equation 3.24]{RosserSchoenfeld}. Next,
\begin{equation}
\label{eq:R4}
\left|A_4\right|\leq R_4(x,t)\de \frac{1}{2}\left|\sum_{p\leq x^{3/2}} \frac{\Lambda_x\left(p^2\right)}{p\log{p}}p^{2\ie t}\right|,
\end{equation}
and
\[
\left|A_5\right|\leq \sum_{r\geq3}\sum_{p^r<x^3} \frac{1}{rp^{r\sigma_{x,t}}} \leq \sum_{r\geq3}\sum_{p} \frac{1}{rp^{r/2}} \leq \frac{1}{3}\sum_{p}\frac{1}{p\left(\sqrt{p}-1\right)}\leq \frac{2.12}{3}.
\]
Because
\begin{multline*}
\left|\sum_{n\leq x^3}\frac{\Lambda_x(n)}{n^{\sigma_{x,t}+\ie t}}\right| \leq \left|\sum_{p\leq x^3}\frac{\Lambda_x(p)}{\sqrt{p}}\left(1-p^{\frac{1}{2}-\sigma_{x,t}}\right)p^{\ie t}\right| + \left|\sum_{p\leq x^3}\frac{\Lambda_{x}(p)}{\sqrt{p}}p^{\ie t}\right| \\
+ \left|\sum_{p\leq x^{3/2}}\frac{\Lambda_x\left(p^2\right)}{p^{2\sigma_{x,t}+2\ie t}}\right| + \frac{1}{3}\sum_{p}\frac{\log{p}}{p\left(\sqrt{p}-1\right)},
\end{multline*}
we have
\[
\left|A_6\right|\leq R_3 + R_5 + \frac{3a_1\pi}{2}\left(\sigma_{x,t}-\frac{1}{2}\right)\log{x} + \frac{2.48a_1\pi}{3}\left(\sigma_{x,t}-\frac{1}{2}\right),
\]
where
\begin{gather}
R_3(x,t)\de a_1\pi\left(\sigma_{x,t}-\frac{1}{2}\right)^2x^{\sigma_{x,t}-\frac{1}{2}}\int_{\frac{1}{2}}^{\infty} x^{\frac{1}{2}-\sigma}\left|\sum_{p\leq x^3}\frac{\Lambda_x(p)\log{(xp)}\log{p}}{p^{\sigma-\ie t}}\right|\dif{\sigma}, \label{eq:R3} \\
R_5(x,t)\de a_1\pi\left(\sigma_{x,t}-\frac{1}{2}\right)\left|\sum_{p\leq x^3}\frac{\Lambda_{x}(p)}{\sqrt{p}}p^{\ie t}\right|. \label{eq:R5}
\end{gather}
In conclusion,
\[
\left|\pi S(t)+\sum_{p\leq x^3}\frac{\sin{\left(t\log{p}\right)}}{\sqrt{p}}\right| \leq \left(a_3+a_2\pi\frac{\log{|t|}}{\log{x}}\right)\left(\sigma_{x,t}-\frac{1}{2}\right)\log{x} + \sum_{n=1}^{5}R_n.
\]
Applying the inequality
\begin{equation}
\label{eq:corHolder}
\left(\sum_{n=1}^{m}u_n\right)^{l} \leq m^{l-1}\sum_{n=1}^{m}u_n^l,
\end{equation}
which simply follows from H\"{o}lder's inequality, finally gives the following.

\begin{proposition}
\label{prop:S}
Let $k\in\N$ and $e^{16}\leq x_0\leq x\leq T^2$. Assume that there exists $b>0$ such that $\log{(2T)}\leq b\log{x}$. Then we have
\begin{flalign}
\label{eq:moment}
\int_{T}^{2T}\left|S(t)+\frac{1}{\pi}\sum_{p\leq x^3}\frac{\sin{\left(t\log{p}\right)}}{\sqrt{p}}\right|^{2k}\dif{t} &\leq \frac{1}{6}\left(6a_2b\left(1+\frac{a_3}{\pi a_2b}\right)\log{x}\right)^{2k}\times \nonumber \\
&\times\int_{T}^{2T}\left(\sigma_{x,t}-\frac{1}{2}\right)^{2k}\dif{t} \nonumber \\
&+ \frac{1}{6}\left(\frac{6}{\pi}\right)^{2k}\sum_{n=1}^{5}\int_{T}^{2T}R_n^{2k}(x,t)\dif{t},
\end{flalign}
where  $R_1(x,t),\ldots,R_5(x,t)$ are given by \eqref{eq:R1}, \eqref{eq:R2}, \eqref{eq:R3}, \eqref{eq:R4}, and \eqref{eq:R5}, respectively, and $a_2\left(x_0\right)$ and $a_3\left(x_0\right)$ are given by \eqref{eq:a2} and \eqref{eq:a3}, respectively.
\end{proposition}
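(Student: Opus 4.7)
\medskip

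\noindent\textbf{Proof proposal.} All the analytical work has already been done in the pointwise estimate derived immediately above the statement, namely
\[
\left|\pi S(t)+\sum_{p\leq x^3}\frac{\sin{(t\log p)}}{\sqrt{p}}\right| \leq \left(a_3+a_2\pi\frac{\log{|t|}}{\log{x}}\right)\left(\sigma_{x,t}-\frac{1}{2}\right)\log{x} + \sum_{n=1}^{5}R_n(x,t),
\]
so my plan is simply to raise both sides to the $2k$-th power, integrate, and keep careful track of constants. First I would use the hypothesis $\log(2T)\leq b\log x$ together with $t\leq 2T$ to replace the $t$-dependent factor in the main term: namely,
\[
a_3+a_2\pi\frac{\log t}{\log x}\;\leq\;a_3+a_2\pi b\;=\;a_2\pi b\!\left(1+\frac{a_3}{\pi a_2 b}\right),
\]
which produces the bracketed constant that appears inside the first parenthesis of the proposition.

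Next I would divide the pointwise bound through by $\pi$ and apply the elementary H\"older-type inequality \eqref{eq:corHolder} with $m=6$ terms (one ``main'' term corresponding to the $\sigma_{x,t}-1/2$ contribution, and five terms $R_1,\dots,R_5$), giving
\[
\left|S(t)+\frac{1}{\pi}\sum_{p\leq x^3}\frac{\sin(t\log p)}{\sqrt p}\right|^{2k} \!\leq\; 6^{2k-1}\!\left[\!\left(\tfrac{a_2 b}{1}\!\left(1+\tfrac{a_3}{\pi a_2 b}\right)\!\log x\right)^{2k}\!\!\left(\sigma_{x,t}-\tfrac12\right)^{2k}\!+\frac{1}{\pi^{2k}}\sum_{n=1}^{5}R_n^{2k}\right]\!.
\]
Factoring $6^{2k-1}=\frac{1}{6}\cdot 6^{2k}$ and distributing the $6^{2k}$ into the bracketed constant yields exactly the quantity $6a_2 b\bigl(1+a_3/(\pi a_2 b)\bigr)\log x$ raised to the $2k$ that appears in the proposition.

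Finally I would integrate both sides in $t$ over $[T,2T]$ and interchange the finite sum over $n$ with the integral; this produces the two integrals $\int_T^{2T}(\sigma_{x,t}-\tfrac12)^{2k}\,dt$ and $\sum_{n=1}^{5}\int_T^{2T}R_n^{2k}(x,t)\,dt$ with exactly the constants claimed. There is no genuine obstacle here: the proof is essentially a bookkeeping step that packages the earlier termwise estimates. The only mild care required is verifying that the constant $a_2\pi b(1+a_3/(\pi a_2 b))$ is written in the form $a_2 b\cdot\pi(1+\cdots)$ consistent with the statement, and that the factor of $1/\pi^{2k}$ picked up when we divide the original inequality by $\pi$ correctly produces the $(6/\pi)^{2k}$ coefficient in front of the $R_n$ integrals.
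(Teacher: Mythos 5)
Your proposal is correct and is exactly the paper's argument: the paper derives the same pointwise bound, invokes the hypothesis $\log(2T)\le b\log x$, applies inequality \eqref{eq:corHolder} with $m=6$ terms, and integrates. The constant bookkeeping you carry out (factoring $6^{2k-1}=\frac{1}{6}\cdot 6^{2k}$ and tracking the $1/\pi$ from dividing the pointwise inequality) matches the stated coefficients precisely.
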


By Lemma \ref{lem:crucial}, the first term on the right-hand side of \eqref{eq:moment} is not greater than
\begin{equation}
\label{eq:Main1}
\frac{1}{6}\left(6a_2b\left(1+\frac{a_3}{\pi a_2b}\right)\right)^{2k} \mathcal{C}\left(x,T,1,2k\right)\cdot T,
\end{equation}
where $T\geq 2T_0\geq e^{1408}$, $e^{16}\leq x_0\leq x\leq T^{1/24}$, and the function $\mathcal{C}$ is defined by \eqref{eq:C4}. We are now ready to estimate the remaining integrals in \eqref{eq:moment}.

\subsection{Estimate for the integrals $R_i(x,t)$}

We are now ready to estimate the remaining integrals in \eqref{eq:moment}.

\subsubsection{Estimate for the integrals $R_1(x,t)$}
We have
\[
\int_{T}^{2T} R_1^{2k}(x,t) \dif{t} = \int_{0}^{T}\left|\sum_{p\leq x^3}\frac{a_5(p)}{\sqrt{p}}p^{\ie u}\right|^{2k} \dif{u},
\]
where
\[
a_5(p) \de \left(1-\frac{\Lambda_x(p)}{\log{p}}\right)p^{\ie T}.
\]
If $p\leq x$, then $a_5(p)=0$. Let $x<p\leq x^2$. By definition of $\Lambda_x(n)$ we then have
\[
\frac{\Lambda_x(p)}{\log{p}} = 1-\frac{1}{2}\left(\frac{\log{p}}{\log{x}}-1\right)^2,
\]
which implies that $\left|a_5(p)\right|\leq (3/2)\log{p}/\log{x^3}$. Take $x^2<p\leq x^3$. Then
\[
\frac{\Lambda_x(p)}{\log{p}} = \frac{1}{2}\left(3-\frac{\log{p}}{\log{x}}\right)^2,
\]
which also implies $\left|a_5(p)\right|\leq (3/2)\log{p}/\log{x^3}$. Thus we can use \cite[Lemma 3]{KaratKor} to obtain
\begin{equation}
\label{eq:Main2}
\frac{1}{T}\int_{T}^{2T} R_1^{2k}(x,t) \dif{t} \leq 13\left(\frac{9}{2}k\right)^k,
\end{equation}
where
\begin{equation*}
x=T^{\frac{c_1}{3k}}, \quad T\geq \exp{\left(\max\left\{e^2k/c_1,e^3/c_1\right\}\right)}, \quad 0<c_1\leq 1.
\end{equation*}

\subsubsection{Estimate for the integrals with $R_2(x,t)$ and $R_3(x,t)$}

Define
\begin{multline*}
I(n) \de \left(a_1\pi\right)^{2nk} \int_{T}^{2T} \left(\sigma_{x,t}-\frac{1}{2}\right)^{2k(n+1)}x^{2k\left(\sigma_{x,t}-\frac{1}{2}\right)}\times \\
\times\left(\int_{\frac{1}{2}}^{\infty}x^{\frac{1}{2}-\sigma}\left|\sum_{p\leq x^3}\frac{\Lambda_x(p)\log{(xp)}\log^n{p}}{p^{\sigma-\ie t}}\right|\dif{\sigma}\right)^{2k}\dif{t}.
\end{multline*}
We need to obtain bounds on $I(0)$ and $I(1)$. By the Cauchy--Schwarz inequality we have $I(n)\leq \left(a_1\pi\right)^{2nk}\sqrt{j_1(n)}\sqrt{j_2(n)}$, where
\begin{gather*}
    j_1(n) \de \int_{T}^{2T} \left(\sigma_{x,t}-\frac{1}{2}\right)^{4k(n+1)}x^{4k\left(\sigma_{x,t}-\frac{1}{2}\right)}\dif{t}, \\
    j_2(n) \de \int_{T}^{2T} \left(\int_{\frac{1}{2}}^{\infty}x^{\frac{1}{2}-\sigma}\left|\sum_{p\leq x^3}\frac{\Lambda_x(p)\log{(xp)}\log^n{p}}{p^{\sigma-\ie t}}\right|\dif{\sigma}\right)^{2k}\dif{t}.
\end{gather*}
H\"{o}lder's inequality with $p=4k/(4k-1)$ and $q=4k$, and Fubini's theorem imply that
\[
j_2(n) \leq \left(\log{x}\right)^{4k(1+n)+1} \int_{0}^{\infty} x^{-u}\int_{0}^{T}\left|\sum_{p\leq x^3}\frac{a_6(p)}{\sqrt{p}}p^{\ie v}\right|^{4k}\dif{v}\dif{u},
\]
where
\[
a_6(p) \de \frac{\Lambda_x(p)\log{(xp)}\log^n{p}}{p^{u}\log^{2+n}{x}}p^{\ie T}.
\]
Because $\Lambda_x(p)\leq\log{p}$, we have $\left|a_6(p)\right|\leq 12\cdot 3^n \log{p}/\log{x^3}$. Then \cite[Lemma 3]{KaratKor} implies
\begin{equation}\label{porter}
j_2(n) \leq 13\left(576\cdot 3^{2n}k\right)^{2k}\left(\log{x}\right)^{4k(n+1)}T,
\end{equation}
where
\begin{equation}
\label{eq:cond2}
x=T^{\frac{c_2}{6k}}, \quad T\geq \exp{\left(\max\left\{2e^2k/c_2,e^3/c_2\right\}\right)}, \quad 0<c_2\leq 1.
\end{equation}
By Lemma \ref{lem:crucial} we have
\begin{equation}\label{ipa}
j_1(n) \leq \mathcal{C}\left(x,T,x^{4k},4k(n+1)\right)\left(\log{x}\right)^{-4k(n+1)}T,
\end{equation}
where
\begin{equation}
\label{eq:cond3}
2\leq x\leq T^{\frac{1}{8(8k+3)}}, \quad T\geq \max\left\{2T_0,2^{8(8k+3)}\right\}, \quad T_0\geq \frac{1}{2}e^{1408}.
\end{equation}
The bounds for $j_1(n)$ and $j_2(n)$ in (\ref{porter}) and (\ref{ipa}) give us
\begin{multline}
\label{eq:Main3}
\frac{1}{T}\int_{T}^{2T} R_2^{2k}(x,t)+R_3^{2k}(x,t) \dif{t} \leq \sqrt{13}\left(24\sqrt{k}\right)^{2k}\sqrt{\mathcal{C}\left(x,T,x^{4k},4k\right)} \\
+\sqrt{13}\left(72\pi a_1\sqrt{k}\right)^{2k}\sqrt{\mathcal{C}\left(x,T,x^{4k},8k\right)},
\end{multline}
where $x$ and $T$ are subject to \eqref{eq:cond2} and \eqref{eq:cond3}.

\subsubsection{Estimate for the integral with $R_4(x,t)$}

We have
\begin{equation}
\label{eq:Main4}
\int_{T}^{2T} R_4^{2k}(x,t)\dif{t} = 2^{-2k}\int_{0}^{T}\left|\sum_{p\leq x^{3/2}}\frac{a_7(p)}{p}p^{2\ie t}\right|^{2k} \dif{t} \leq \left(\frac{1}{4}k\right)^k T,
\end{equation}
where $a_7(p)\de \Lambda_x\left(p^2\right)p^{2\ie T}/\log{p}$, since $\left|a_7(p)\right|\leq 1$ and then the last inequality follows by \cite[Lemma 3]{KaratKor}. Inequality \eqref{eq:Main4} is valid for
\begin{equation*}
x=T^{\frac{2c_3}{3k}}, \quad T\geq \exp{\left(\max\left\{e^2k/c_3,e^3/c_3\right\}\right)}, \quad 0<c_3\leq 1.
\end{equation*}

\subsubsection{Estimate for the integral with $R_5(x,t)$}

By the Cauchy--Schwarz inequality we have
\[
\int_{T}^{2T} R_5^{2k}(x,t) \dif{t} \leq \left(a_1\pi\log{x}\right)^{2k}\left|\int_{T}^{2T}\left(\sigma_{x,t}-\frac{1}{2}\right)^{4k}\dif{t}\int_{0}^{T}\left|\sum_{p\leq x^3}\frac{a_8(p)}{\sqrt{p}}p^{\ie t}\right|^{4k}\dif{t}\right|^{\frac{1}{2}},
\]
where $a_8(p)\de \Lambda_x(p)p^{\ie T}/\log{x}$. By Lemma \ref{lem:crucial} and \cite[Lemma 3]{KaratKor} we obtain
\begin{equation}
\label{eq:Main5}
\frac{1}{T}\int_{T}^{2T} R_5^{2k}(x,t)\dif{t} \leq \sqrt{13}\left(6\pi a_1 \sqrt{k}\right)^{2k}\sqrt{\mathcal{C}\left(x,T,1,4k\right)},
\end{equation}
valid for $x$ and $T$ satisfying $2\leq x\leq T^{1/24}$, $T\geq 2T_0\geq e^{1408}$ and \eqref{eq:cond2}.

\subsection{Proof of Theorem \ref{thm:mainForS}.} 

Let $x=T^{\varepsilon/k}$, $x_0\geq e^{16}$ and $T_0\geq \frac{1}{2}e^{1408}$. Combining all restrictions on $x$ and $T$ previously obtained in the above sections, we have
\begin{gather*}
\frac{k\log{x_0}}{\log{T}} \leq \varepsilon \leq \frac{k}{8(8k+3)}, \\
T\geq \exp{\left(\max\left\{\log{\left(2T_0\right)},\frac{k}{\varepsilon}\log{x_0},\frac{2e^2k}{3\varepsilon},\frac{2e^3}{3\varepsilon},8(8k+3)\log{2}\right\}\right)}.
\end{gather*}
From the first condition we get $\varepsilon\leq1/88$, which implies $k\varepsilon^{-1}\log{x_0}\geq1408k$ and consequently $T\geq\max\left\{x_0^{k/\varepsilon},2T_0\right\}$.
Thus means that the conditions in Theorem \ref{thm:mainForS} are satisfied. Define
\[
b\de \frac{k}{\varepsilon}\left(1+\frac{\varepsilon\log{2}}{k\log{x_0}}\right).
\]
Such $b$ satisfies the condition in Proposition \ref{prop:S}. We observe that the largest contribution to the final bound comes from \eqref{eq:Main1}. Therefore, the term $1+\widetilde{R}_1$ in \eqref{eq:Chat} comes from \eqref{eq:Main1}, \eqref{eq:Main2} and \eqref{eq:Main4}, while the remainder comes from \eqref{eq:Main3} and \eqref{eq:Main5}.

\section{Explicit second and fourth power moments of $S(t+h)-S(t)$ and the proof of Theorem \ref{umpires}}
\label{stumps}
The main goal of this section is to prove Theorem \ref{umpires}, which we will see is a consequence of the following, explicit version of Theorem \ref{thm:tsang} for $k\in\{1,2\}$ and $H=T$. We note that the role of $\alpha$ in what follows is not of much importance and shows that the choices $\alpha=3$ and $\alpha=2$ by Fujii and Tsang, respectively, are arbitrary and do not come from the method of the proof.

\begin{theorem}
\label{thm:difference}
Let $0<h\leq 1$, $0<\varepsilon\leq 3/88$ and $1\leq\alpha\leq 7/(10\varepsilon)$. For $T\geq T_{1}\geq\max\left\{x_0^{6/\varepsilon},2T_0\right\}$, where $x_0$ and $T_0$ are from Theorem \ref{thm:mainForS}, we have
\begin{gather*}
\left|\mathcal{J}_2(T,h)-\frac{T}{\pi^2}\log{\left(\alpha+h\log{T}\right)}\right|
\leq C_2T\log^{\frac{1}{2}}{\left(\alpha+h\log{T}\right)}, \\
\left|\mathcal{J}_4(T,h)-\frac{3T}{\pi^4}\log^2{\left(\alpha+h\log{T}\right)}\right|
\leq C_3T\log^{\frac{3}{2}}{\left(\alpha+h\log{T}\right)},
\end{gather*}
where
\begin{gather*}
    C_2 \de 2\sqrt{E_1E_2} + \frac{E_1+D_1}{\log^{\frac{1}{2}}{\left(\alpha+h\log{T}\right)}}, \\
    C_3 \de 4\sqrt[4]{F_1^3F_2} + \frac{6\sqrt{F_1F_2}+D_2}{\log^{\frac{1}{2}}{\left(\alpha+h\log{T}\right)}} + \frac{4\sqrt[4]{F_1F_2^3}}{\log{\left(\alpha+h\log{T}\right)}} + \frac{F_2}{\log^{\frac{3}{2}}{\left(\alpha+h\log{T}\right)}},
\end{gather*}
with
\begin{gather*}
    E_1 \de \frac{1}{\pi^2}+\frac{D_1}{\log{\left(\alpha+h\log{T}\right)}}, \quad
    F_1 \de \frac{3}{\pi^4} + \frac{D_2}{\log{\left(\alpha+h\log{T}\right)}}, \\
    E_2 \de 2\left(C\left(\varepsilon/3,1,x_0\right)+C'\left(\varepsilon/3,1,x_0\right)\right), \quad
    F_2 \de 8\left(C\left(\varepsilon/3,2,x_0\right)+C'\left(\varepsilon/3,2,x_0\right)\right),
\end{gather*}
and functions $D_1$, $D_2$, $C$ and $C'$ are defined by \eqref{eq:d1}, \eqref{eq:d2}, \eqref{eq:kk1} and \eqref{eq:kk2}, respectively.
\end{theorem}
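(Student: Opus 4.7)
The plan is to approximate $S(t)$ and $S(t+h)$ by a common Dirichlet polynomial via Theorem \ref{thm:mainForS} and Corollary \ref{cor:mainForS} (applied with their $\varepsilon$ equal to $\varepsilon/3$), compute the resulting $2k$-th moment for $k\in\{1,2\}$ by orthogonality, and evaluate the resulting prime sum via an explicit Mertens-type estimate. Throughout I set $X\de T^{\varepsilon/k}$ and
\[
P(u)\de -\frac{1}{\pi}\sum_{p\le X}\frac{\sin(u\log p)}{\sqrt{p}}.
\]

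\textbf{Step 1 (Minkowski reduction).} Write
\[
S(t+h)-S(t) = -\bigl(P(t+h)-P(t)\bigr) + \bigl(S(t+h)+P(t+h)\bigr) - \bigl(S(t)+P(t)\bigr),
\]
and apply Minkowski's inequality in $L^{2k}[T,2T]$. Theorem \ref{thm:mainForS} bounds $\|S+P\|_{2k}^{2k}$ by $C(\varepsilon/3,k,x_0)T$, while Corollary \ref{cor:mainForS} bounds the shifted analogue by $C'(\varepsilon/3,k,x_0)T$. The combined $L^{2k}$-error is exactly $E_2 T$ for $k=1$ (with the factor $2$ accounting for the two brackets) and $F_2 T$ for $k=2$ (with factor $8=2^{2k-1}$), so the problem reduces to evaluating $\mathcal{I}_{2k}(T,h)\de\int_T^{2T}|P(t+h)-P(t)|^{2k}\dif{t}$.

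\textbf{Step 2 (Moment of the Dirichlet polynomial).} Using
\[
\sin((t+h)\log p) - \sin(t\log p) = 2\sin\!\bigl(\tfrac{h\log p}{2}\bigr)\cos\!\bigl((t+\tfrac{h}{2})\log p\bigr),
\]
expand $|P(t+h)-P(t)|^{2k}$ as a sum over $2k$-tuples of primes of products of $2k$ cosines. Integrating over $[T,2T]$ and invoking the orthogonality argument underlying \cite[Lemma 3]{KaratKor}, only the diagonal pairings survive, giving
\[
\mathcal{I}_{2k}(T,h) = \frac{(2k)!}{2^k\,k!\,\pi^{2k}}\,T\cdot\Sigma_k(h,X) + O_{k}(T/\log T),
\]
where $\Sigma_k(h,X)\de\bigl(\sum_{p\le X}(2-2\cos(h\log p))/p\bigr)^{k}$. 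The off-diagonal contribution is admissible because $X=T^{\varepsilon/k}$ is a fixed small power of $T$, so products of distinct primes up to $X$ have logarithmic frequencies $\ll T^{-c}$.

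\textbf{Step 3 (Explicit Mertens-type sum).} Establish
\[
\sum_{p\le X}\frac{2-2\cos(h\log p)}{p} = \log(\alpha + h\log T) + O(1)
\]
uniformly in $1\le\alpha\le 7/(10\varepsilon)$ and $0<h\le 1$. Split at the scale $p\asymp e^{1/h}$: for small $p$ use $2-2\cos(h\log p) = h^2\log^2 p + O(h^4\log^4 p)$ with an explicit partial summation against $\sum_{p\le y}\log^2 p/p$; for large $p$ use $\sum_{p\le X}1/p = \log\log X + M + O(\log^{-2}X)$ and the trivial bound on $\cos$. The resulting constants determine $D_1$ in \eqref{eq:d1} and $D_2$ in \eqref{eq:d2}.

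\textbf{Step 4 (Assembly).} By Steps 2 and 3, $\|P(\cdot+h)-P\|_{2k}^{2k} = E_1 T\log + O(T)$ when $k=1$ and $= F_1 T\log^2 + O(T\log)$ when $k=2$. Feeding these into the Minkowski inequality of Step 1 and expanding binomially gives, for $k=1$,
\[
\mathcal{J}_2(T,h) \le \bigl(\sqrt{E_1T\log} + \sqrt{E_2T}\bigr)^2 = E_1T\log + 2\sqrt{E_1E_2}\,T\sqrt{\log} + E_2T,
\]
which after subtracting $\frac{T}{\pi^2}\log$ and dividing by $T\sqrt{\log}$ yields exactly $C_2$ as stated. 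For $k=2$, expanding $(F_1^{1/4}\sqrt{\log}+F_2^{1/4})^4$ produces all five terms $F_1\log^2$, $4\sqrt[4]{F_1^3F_2}\log^{3/2}$, $6\sqrt{F_1F_2}\log$, $4\sqrt[4]{F_1F_2^3}\sqrt{\log}$, and $F_2$; subtracting $\frac{3}{\pi^4}T\log^2 = F_1T\log^2 - D_2T\log$ and dividing by $T\log^{3/2}$ recovers $C_3$ term for term, with the $D_2/\sqrt{\log}$ piece combining with $6\sqrt{F_1F_2}/\sqrt{\log}$ into the middle summand.

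The main obstacle will be Step 3: uniformity of the Mertens-type evaluation across the regimes $h\log X\lesssim 1$ and $h\log X\gg 1$, with explicit constants $D_1,D_2$ small enough that the final $c_0,c_1$ in Theorem \ref{thm:Main} remain positive. A secondary difficulty is the explicit off-diagonal control in Step 2 for $k=2$, which requires treating coincidences $p_1p_2=p_3p_4$ versus truly off-diagonal quadruples and using unique factorization; this is where the restriction $X=T^{\varepsilon/k}$ with small $\varepsilon$ is essential.
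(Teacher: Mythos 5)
Your architecture coincides with the paper's: split $S(t+h)-S(t)$ into the difference of the Dirichlet polynomials plus two approximation errors, bound the errors in $L^{2k}$ via Theorem \ref{thm:mainForS} and Corollary \ref{cor:mainForS} at $\varepsilon/3$ (giving $E_2$, $F_2$ with the factors $2^{2k-1}$ from \eqref{eq:corHolder}), evaluate the polynomial moment by reducing to the diagonal, estimate the diagonal prime sum by an explicit Mertens-type computation, and assemble with Cauchy--Schwarz/H\"older. The one real divergence is Step 2: you propose to expand the $2k$-th power and control the off-diagonal terms by hand, and you correctly flag this as a difficulty. The paper avoids it entirely by invoking Lemma \ref{lem:tsang2} (an explicit Montgomery--Vaughan/Preissmann mean value theorem), whose specializations \eqref{eq:main} and \eqref{eq:main2} deliver the diagonal main term together with a clean explicit error $C_1(k)\left(\sum_p p|a_p|^2\right)^k$, and, at $k=2$, the $\sum_p|a_p|^4$ correction to $2\left(\sum_p|a_p|^2\right)^2$ arising from the coincidences $p_1p_2=p_3p_4$ you mention. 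Since that lemma is already established in the same section, you should invoke it rather than redo the off-diagonal analysis; making the latter explicit is the hardest part of your plan and is unnecessary.

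Three bookkeeping points. (i) With your sign convention $P(u)=-\pi^{-1}\sum_{p\le X}\sin(u\log p)/\sqrt{p}$, the bracket $S+P$ equals $S-\pi^{-1}\sum_{p\le X}\sin(\cdot\log p)/\sqrt{p}$, which is not the quantity bounded in \eqref{eq:mainS}; drop the minus sign. (ii) Your Step 2 coefficient $\frac{(2k)!}{2^k k!\pi^{2k}}\Sigma_k$ with $\Sigma_k=\bigl(\sum_{p\le X}(2-2\cos(h\log p))/p\bigr)^k$ is $2^k$ times too large, while your Step 3 claim that $\sum_{p\le X}(2-2\cos(h\log p))/p=\log(\alpha+h\log T)+O(1)$ is too small by the same factor (Lemma \ref{lem:primes} gives $\sum_{p\le X}(1-\cos(h\log p))/p=\log(h\log X)+O(1)$, so the doubled sum is $2\log(h\log T)+O(1)$). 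The two slips cancel in the product, but each intermediate statement is false as written and would derail an explicit computation of $D_1,D_2$. (iii) Your assembly actually yields $C_2=2\sqrt{E_1E_2}+(E_2+D_1)\log^{-1/2}(\alpha+h\log T)$ rather than the stated $(E_1+D_1)$ in the second term, so it does not match ``exactly''; reconcile this (the analogue in $C_3$, where $F_2$ occupies the final slot, indicates the intended constant is $E_2$). Finally, Step 4 as written only proves the upper bound on $\mathcal{J}_{2k}$; the theorem is two-sided, so you also need the reverse inequality, which the paper obtains in one stroke by bounding $\bigl|\int(|\Delta S|^{2k}-P_k^{2k})\bigr|$ directly.
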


We will see that Theorem \ref{thm:mainForS} is a key insert in the proof of Theorem \ref{thm:difference}. Assuming for the moment that Theorem \ref{thm:difference} holds, we can quickly prove Theorem \ref{umpires}.

\subsection{Proof of Theorem \ref{umpires}}
Take $\alpha=1$, $\varepsilon=3/88$ and $h=2\pi M\lambda/\log{T}$. Using the trivial bound $\log{(2\pi M)}\leq\log{\left(1+2\pi M\lambda\right)}$, Theorem \ref{thm:difference} asserts that there exists $T_0>0$ such that
\begin{equation}\label{lager}
 \mathcal{J}_2\left(T,\frac{2\pi M\lambda}{\log{T}}\right)\geq \frac{\log{(2\pi M)}}{\pi^2}\left(1-\frac{\pi^2C_2}{\sqrt{\log{(2\pi M)}}}\right)T
\end{equation}
for all $T\geq T_0$.
We seek the smallest value of $M$ such that the lower bound in (\ref{lager}) is positive. Choosing $\log(\log(2\pi M)) = 30.76$ works, but the slightly smaller value of $30.75$ fails. This gives the value of $M_{2}$. To estimate $M_{3}$ we use the inequality $\log(1+x) \leq x$, and that fact that $\lambda \leq 2$ to deduce that
$$\log(1 + 2\pi M \lambda) \leq \log (2\pi M) + 0.7.$$
This could ultimately be improved with our final choice of $\lambda$ but this makes no difference to the calculations. We therefore obtain $M_{3}$.
Finally, the choice of $M_{1}$ follows from H\"{o}lder's inequality, since then we can take $M_1\de\sqrt{M_2^3/M_3}$.

%
%

\subsection{Preliminary results}It remains to prove Theorem \ref{thm:difference}. The next two lemmas are explicit versions of Lemmas 1 and 2 of \cite{Tsang}. Note that the first lemma is a generalization of Preissmann's refinement of 
 Hilbert's inequality, namely
\begin{equation}
\label{eq:MVH}
\left|
\int_{T_1}^{T_2}\left|\sum_{n\leq X}a_n n^{-\ie t}\right|^2\dif{t} - \left(T_2-T_1\right)\sum_{n\leq X}\left|a_n\right|^2
\right| \leq 3\pi m_0\sum_{n\leq X}n\left|a_n\right|^2,
\end{equation}
where $X\geq2$, $T_1$ and $T_2$ are real numbers, $\left\{a_n\right\}_{n\leq X}$ is a sequence of complex numbers, and $m_0\de \sqrt{1+\frac{2}{3}\sqrt{\frac{6}{5}}}$. Inequality \eqref{eq:MVH} easily follows from \cite{Preissmann} after observing that $\left|\log{(n/m)}\right|\geq 2/(3n)$ for positive and distinct integers $n$ and $m$.

\begin{lemma}
\label{lem:MVP}
Let $X\geq 2$, and $\left\{a_n\right\}_{n\leq X}$ and $\left\{b_n\right\}_{n\leq X}$ be two sequences of complex numbers. Define
\[
f(t)\de \sum_{n\leq X}a_n n^{-\ie t}, \quad g(t)\de \sum_{n\leq X}b_n n^{-\ie t}.
\]
Then
\[
\left|\int_{T_1}^{T_2} f(t)\overline{g(t)}\dif{t} - \left(T_2-T_1\right)\sum_{n\leq X}a_n\overline{b_n}\right| \leq 3\pi m_0\sqrt{\sum_{n\leq X} n|a_n|^2}\sqrt{\sum_{n\leq X} n|b_n|^2},
\]
where $T_1$ and $T_2$ are real numbers.
\end{lemma}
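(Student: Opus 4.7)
The plan is to deduce Lemma \ref{lem:MVP} from Preissmann's diagonal estimate \eqref{eq:MVH} by a polarization argument. Writing
\[
B(a,b)\de \int_{T_1}^{T_2} f(t)\overline{g(t)}\dif{t} - \left(T_2-T_1\right)\sum_{n\leq X} a_n\overline{b_n}
\]
for the expression on the left-hand side of the claimed inequality, and $\|a\|^2\de \sum_{n\leq X} n|a_n|^2$ for the relevant weighted norm, I would first check by a direct computation that $B$ is complex-linear in $a$, conjugate-linear in $b$, and satisfies the Hermitian identity $B(b,a)=\overline{B(a,b)}$. The hypothesis \eqref{eq:MVH} is then precisely the diagonal bound $|B(a,a)|\leq 3\pi m_0 \|a\|^2$ valid for every finite complex sequence.

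Next I would polarize using only a single rotated pair. For any $\theta\in\R$, set $c_n\de a_n+e^{\ie\theta}b_n$ and $d_n\de a_n-e^{\ie\theta}b_n$. Expanding $B(c,c)$ and $B(d,d)$ via sesquilinearity and the Hermitian identity yields
\[
B(c,c)-B(d,d)=4\,\mathrm{Re}\!\left(e^{-\ie\theta}B(a,b)\right),
\]
and choosing $\theta=\arg B(a,b)$ forces the right-hand side to equal $4|B(a,b)|$. Applying \eqref{eq:MVH} to $c$ and $d$ separately, together with the parallelogram identity $|c_n|^2+|d_n|^2=2\bigl(|a_n|^2+|b_n|^2\bigr)$, then gives
\[
4|B(a,b)| \leq |B(c,c)|+|B(d,d)| \leq 3\pi m_0\bigl(\|c\|^2+\|d\|^2\bigr) = 6\pi m_0\bigl(\|a\|^2+\|b\|^2\bigr).
\]

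Finally, $B$ is invariant under the real rescaling $a\mapsto \lambda a,\ b\mapsto \lambda^{-1}b$ for every $\lambda>0$, while the norms transform as $\|\lambda a\|^2=\lambda^2\|a\|^2$ and $\|\lambda^{-1}b\|^2=\lambda^{-2}\|b\|^2$. The preceding display therefore holds with $\|a\|^2,\|b\|^2$ replaced by $\lambda^2\|a\|^2,\lambda^{-2}\|b\|^2$; minimizing the right-hand side by AM--GM at $\lambda^2=\|b\|/\|a\|$ (and disposing of the degenerate cases $\|a\|\|b\|=0$ trivially, since then $B(a,b)=0$) yields the desired $|B(a,b)|\leq 3\pi m_0\|a\|\|b\|$.

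The one non-routine point is the polarization itself. A standard four-term complex polarization using $a\pm b$ and $a\pm\ie b$ would cost a factor of two and only deliver the constant $6\pi m_0$; the rotated two-term scheme above is what allows the sharp Preissmann constant $3\pi m_0$ to pass through to the bilinear form, which is what the later use of Lemma \ref{lem:MVP} in the proof of Theorem \ref{thm:difference} requires.
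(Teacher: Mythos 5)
Your proof is correct. Note that the paper gives no argument of its own here: its ``proof'' is a one-line deferral to Tsang's Lemma~1 with Preissmann's inequality substituted for Montgomery--Vaughan's. The route behind that citation is to expand $\int_{T_1}^{T_2} f\overline{g}\,\dif{t}$, keep the diagonal as the main term, and estimate the off-diagonal sum $\sum_{m\neq n}a_m\overline{b_n}\bigl((n/m)^{\ie T_2}-(n/m)^{\ie T_1}\bigr)/\bigl(\ie\log(n/m)\bigr)$ by two applications of the \emph{bilinear} form of the Hilbert-type inequality together with $|\log(n/m)|\geq 2/(3n)$; this requires Preissmann's result in its bilinear form, which the paper never actually records. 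Your polarization argument instead takes the stated diagonal estimate \eqref{eq:MVH} as a black box and upgrades it to the bilinear statement, which is arguably the cleaner deduction in context. The computations all check out: $B$ is sesquilinear and Hermitian, the rotated two-term polarization with $\theta=\arg B(a,b)$ gives $4|B(a,b)|=B(c,c)-B(d,d)\leq |B(c,c)|+|B(d,d)|$, the parallelogram law for the weighted norm yields $4|B(a,b)|\leq 6\pi m_0\bigl(\|a\|^2+\|b\|^2\bigr)$, and the rescaling $a\mapsto\lambda a$, $b\mapsto\lambda^{-1}b$ (under which $B$ is invariant for real $\lambda>0$) recovers the constant $3\pi m_0$ with no loss. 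You are also right about the one delicate point: the standard four-term polarization would only deliver $6\pi m_0$, whereas your two-term scheme, which crucially uses $B(b,a)=\overline{B(a,b)}$, is exactly the elementary proof that a Hermitian form bounded on the diagonal by $C\|\cdot\|^2$ is bounded as a sesquilinear form by $C\|a\|\,\|b\|$ --- precisely what the explicit constant in Theorem \ref{thm:difference} needs.
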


\begin{proof}
The proof is the same as in \cite[p.~373]{Tsang}, except that we used \eqref{eq:MVH} at the appropriate places in Tsang's proof.
\end{proof}

\begin{lemma}
\label{lem:tsang2}
Let $X\geq 2$ and $\left\{a_p\right\}_{p\leq X}$ be a sequence of complex numbers with prime indices $p$, and $k$ a positive integer. Then
\[
\left|\int_{T_1}^{T_2} \Im\left\{\sum_{p\leq X} \frac{a_p}{p^{\ie t}}\right\}^{2k}\dif{t} - \frac{T_2-T_1}{4^k}\binom{2k}{k}\sum_{\mathbf{p}}\left|a_{\mathbf{p}}\right|^2\left|\mathbf{p}\right|\right| \leq C_1(k)\left(\sum_{p\leq X}p\left|a_p\right|^2\right)^k,
\]
where
\[
C_1(k) \de \frac{3\pi m_0}{4^k}\sum_{m=0}^{2k}\binom{2k}{m}\sqrt{m!(2k-m)!},
\]
and $\mathbf{p}\de\left\{\left(p_{l_1},\ldots,p_{l_k}\right)\colon p_{l_1}\leq X,\ldots,p_{l_k}\leq X\right\}$, $a_{\mathbf{p}}\de a_{p_{l_1}}\cdots a_{p_{l_k}}$, and $|\mathbf{p}|$ is the number of permutations of $p_{l_1},\ldots,p_{l_k}$.
\end{lemma}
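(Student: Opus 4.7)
The plan is to adapt Tsang's proof of \cite[Lemma 2]{Tsang}, but invoking the explicit mean-value estimate Lemma \ref{lem:MVP} in place of the standard Hilbert--Montgomery--Vaughan inequality. Writing $f(t) := \sum_{p\le X} a_p p^{-\ie t}$ and using $\Im z = (z-\overline{z})/(2\ie)$, I would expand
\[
\Im\{f(t)\}^{2k} = \frac{(-1)^k}{4^k}\sum_{m=0}^{2k}\binom{2k}{m}(-1)^m f(t)^{2k-m}\,\overline{f(t)}^m.
\]
For each $m$ I would then view $f^{2k-m}$ and $f^{m}$ as Dirichlet polynomials $\sum_N A^{(m)}_N N^{-\ie t}$ and $\sum_M B^{(m)}_M M^{-\ie t}$, whose coefficients are sums over ordered tuples of primes with prescribed product.

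Next, apply Lemma \ref{lem:MVP} to each integral $\int_{T_1}^{T_2} f^{2k-m}\overline{f^m}\dif{t}$. The diagonal term $(T_2-T_1)\sum_N A^{(m)}_N \overline{B^{(m)}_N}$ vanishes unless $N$ admits representations both as a product of $2k-m$ and of $m$ primes, which by unique factorization forces $m=k$. For $m=k$ the diagonal equals $\sum_N |A^{(k)}_N|^2$, and a direct multinomial count shows that this agrees with $\sum_\mathbf{p}|a_\mathbf{p}|^2|\mathbf{p}|$: if $N=q_1^{r_1}\cdots q_s^{r_s}$ with $\sum r_i=k$, then $A^{(k)}_N$ is the multinomial coefficient $\binom{k}{r_1,\ldots,r_s}$ times $a_{q_1}^{r_1}\cdots a_{q_s}^{r_s}$, and squaring then re-grouping by ordered tuples with their permutation-counts reproduces the claimed expression. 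The prefactor $\binom{2k}{k}/4^k$ then yields exactly the main term in the lemma.

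To handle the error, the key inequality is
\[
\sum_N N|A^{(m)}_N|^2 \le (2k-m)!\Bigl(\sum_{p\le X}p|a_p|^2\Bigr)^{2k-m},
\]
obtained by Cauchy--Schwarz applied to the representation $A^{(m)}_N = \sum_{\prod p_i = N} a_{p_1}\cdots a_{p_{2k-m}}$ and then by absorbing the $N$-weight into the product $p_1\cdots p_{2k-m}$ so that the resulting sum over ordered tuples factors as a power of $\sum_p p|a_p|^2$. The companion estimate for $B^{(m)}_M$ contributes a factor $\sqrt{m!}$. Consequently the error from the $m$-th integral is at most $3\pi m_0\sqrt{m!(2k-m)!}\,\bigl(\sum_p p|a_p|^2\bigr)^k$, and after multiplying by $\binom{2k}{m}/4^k$ and summing over $m=0,\ldots,2k$, one obtains precisely $C_1(k)\bigl(\sum_p p|a_p|^2\bigr)^k$.

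The only real obstacle is the combinatorial bookkeeping that identifies $\sum_N |A^{(k)}_N|^2$ with the expression $\sum_\mathbf{p}|a_\mathbf{p}|^2|\mathbf{p}|$ in the lemma statement; the analytic heart of the argument is already packaged in the Preissmann-type Lemma \ref{lem:MVP}, so no further delicate estimation is required.
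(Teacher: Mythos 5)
Your proposal is correct and follows exactly the route the paper takes: the paper's proof of Lemma \ref{lem:tsang2} simply defers to Tsang's argument (the binomial expansion of $\Im\{f\}^{2k}$ into the terms $f^{2k-m}\overline{f}^{m}$, the observation that only $m=k$ contributes a diagonal, and the Cauchy--Schwarz bound $\sum_N N|A_N^{(m)}|^2\le (2k-m)!(\sum_p p|a_p|^2)^{2k-m}$) with the Preissmann-type Lemma \ref{lem:MVP} inserted in place of the usual Montgomery--Vaughan mean value theorem. Your bookkeeping reproduces the stated constant $C_1(k)$ exactly, so nothing is missing.
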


\begin{proof}
The proof is the same as in \cite[pp.~374--375]{Tsang}, except that we used Lemma~\ref{lem:MVP}.
\end{proof}

Notice that
\begin{equation}
\label{eq:gbC1}
C_1(k) = \frac{3\pi m_0\sqrt{(2k)!}}{4^k}\sum_{m=0}^{2k}\sqrt{\binom{2k}{m}} \leq \frac{3\pi m_0\sqrt{2k+1}\sqrt{(2k)!}}{2^k} \leq 6\pi m_0k^k,
\end{equation}
where we used Stirling approximation $n!\leq 2\sqrt{2n}(n/e)^n$ for positive integers $n$.

Let $X\geq 2$. Lemma \ref{lem:tsang2} implies
\begin{equation}
\label{eq:main}
\left|\int_{T}^{2T} \Im\left\{\sum_{p\leq X} \frac{a_p}{p^{\ie t}}\right\}^{2}\dif{t} - \frac{T}{2}\sum_{p\leq X}\left|a_{p}\right|^2\right| \leq C_1(1)\sum_{p\leq X}p\left|a_p\right|^2,
\end{equation}
where $C_1(1)=3\pi m_0\left(1+\sqrt{2}\right)/2$. Since
\[
\sum_{p_l,p_m\leq X} \left|a_{p_l}a_{p_m}\right|^2 \left|\left\{p_l,p_m\right\}\right|! = 2\left(\sum_{p\leq X}\left|a_{p}\right|^2\right)^2 - \sum_{p\leq X}\left|a_p\right|^4,
\]
it also implies that
\begin{multline}
\label{eq:main2}
\left|\int_{T}^{2T} \Im\left\{\sum_{p\leq X} \frac{a_p}{p^{\ie t}}\right\}^{4}\dif{t} - \frac{3T}{4}\left(\sum_{p\leq X}\left|a_{p}\right|^2\right)^2\right| \leq \frac{3T}{4}\sum_{p\leq X}\left|a_p\right|^4 \\
+ C_1(2)\left(\sum_{p\leq X}p\left|a_p\right|^2\right)^2,
\end{multline}
where $C_1(2)=9\pi m_0\left(1+\sqrt{6}\right)/4$.

\begin{lemma}
\label{lem:primes}
Let $X\geq2$. If $\log{2}/\log{X}\leq h\leq 1$, then
\[
\left|\log{\left(h\log{X}\right)} - \sum_{p\leq X} \frac{1-\cos{\left(h\log{p}\right)}}{p}\right| \leq 13.88 + \frac{3}{\log^2{X}}.
\]
If $0\leq h\leq\log{2}/\log{X}$, then
\[
\sum_{p\leq X} \frac{1-\cos{\left(h\log{p}\right)}}{p} \leq 2.02 + \frac{3}{\log^2{X}}.
\]
\end{lemma}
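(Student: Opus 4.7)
The plan is to split
\[
\sum_{p\leq X}\frac{1-\cos(h\log p)}{p} = \sum_{p\leq X}\frac{1}{p} - \sum_{p\leq X}\frac{\cos(h\log p)}{p}
\]
and treat each piece via an explicit Mertens-type estimate together with partial summation. The first sum is handled directly by $\sum_{p\leq X}1/p = \log\log X + M + O(1/\log^2 X)$, where $M$ is the Meissel--Mertens constant, accounting for the $\log\log X$ growth; one would use the sharpest available explicit version (e.g.\ from Rosser--Schoenfeld) to capture the $3/\log^2 X$ remainder appearing in the statement.

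For the cosine sum in the regime $\log 2/\log X \leq h \leq 1$, I would apply Abel summation against $B(t)\de\sum_{p\leq t}1/p$, and substitute $B(t) = \log\log t + M + \rho(t)$ with $|\rho(t)| \ll 1/\log^2 t$. The $\log\log t$-piece, after integration by parts and the substitution $u=h\log t$, reduces to a cosine integral $\mathrm{Ci}(h\log X) - \mathrm{Ci}(h\log 2)$ together with explicit boundary-type terms. Using $\mathrm{Ci}(u) = \gamma + \log u + O(u^2)$ as $u\to 0^+$ and the uniform bound $|\mathrm{Ci}(u)| = O(1/u)$ for $u\geq 1$, a cancellation of $\log(h\log X)$-terms occurs and the contribution becomes $-\log h + O(1)$. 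The $M$-contribution is uniformly bounded since $h\int_2^X \sin(h\log t)/t\,dt$ telescopes (after the same substitution) to $\cos(h\log 2) - \cos(h\log X)$, while the $\rho(t)$-contribution is absolutely bounded because $h\int_2^X |\rho(t)|/t\,dt \ll h\int_2^X dt/(t\log^2 t) = O(h)$ converges. Combining yields $\sum_{p\leq X}\cos(h\log p)/p = -\log h + O(1)$, and subtracting from the Mertens expansion recovers the advertised $\log(h\log X) + O(1)$.

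For the regime $0\leq h\leq \log 2/\log X$, every prime $p\leq X$ satisfies $h\log p \leq \log 2 < 1$, so the inequality $1-\cos\theta\leq \theta^2/2$ applies uniformly, yielding
\[
\sum_{p\leq X}\frac{1-\cos(h\log p)}{p} \leq \frac{h^2}{2}\sum_{p\leq X}\frac{(\log p)^2}{p}.
\]
An Abel summation from the Rosser--Schoenfeld bound on $\sum_{p\leq X}(\log p)/p$ gives $\sum_{p\leq X}(\log p)^2/p \leq (\log X)^2/2 + O(\log X)$, so, combined with $h^2\leq (\log 2/\log X)^2$, the sum is at most $(\log 2)^2/4 + O(1/\log X)$, comfortably below $2.02$.

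The main obstacle will be squeezing out the precise explicit constants $13.88$ and $2.02$: this forces use of the tightest available numerical Mertens bounds, explicit quantitative control on $\mathrm{Ci}$ separately on $[0,1]$ and $[1,\infty)$, and careful propagation of the $O(1/\log^2 X)$ remainder through every integration by parts (the explicit $3/\log^2 X$ on the right-hand side signals that this remainder is not absorbed into the constant). Apart from this bookkeeping the argument is a sequence of routine estimates.
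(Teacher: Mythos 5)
Your argument is correct in substance, but it takes a genuinely different route from the paper's. The paper does not split off the Mertens sum: it applies the Rosser--Schoenfeld partial-summation identity directly to $f(y)=y^{-1}\left(1-\cos(h\log y)\right)$ against $\vartheta(y)$, so the main term appears at once as $\Cin(h\log X)-\Cin(h\log 2)$ with $\Cin(z)=\int_0^z(1-\cos t)t^{-1}\dif{t}$ --- a nonnegative function with a removable singularity at $0$ --- and the error is controlled by the pointwise bound $\left|\vartheta(y)-y\right|\leq 3y/(2\log y)$; in particular every intermediate quantity stays bounded and no cancellation of $\log\log X$ terms is ever needed. Your decomposition into $\sum_{p\le X}p^{-1}$ minus $\sum_{p\le X}\cos(h\log p)p^{-1}$, with Abel summation against $B(t)=\sum_{p\le t}p^{-1}$, does work: the boundary term $\cos(h\log X)\log\log X$ cancels exactly against the $-\log(h\log X)\cos(h\log X)+\log h\cos(h\log X)$ produced by integrating $\log(u/h)\sin u$ by parts, and $-\mathrm{Ci}(h\log 2)=-\log h+O(1)$ supplies the $-\log h$; the two routes are essentially related by the identity $\Cin(z)=\gamma+\log z-\mathrm{Ci}(z)$. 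What the paper's version buys is that the constant $13.88$ falls out of three bounded, individually estimated terms ($3.927+9.62+0.333$) with no large cancellations to track, whereas your version passes through quantities that grow like $\log\log X$ before cancelling, so the explicit bookkeeping you flag is genuinely more delicate (though the slack in $13.88$ leaves ample room). In the small-$h$ regime your Taylor bound $1-\cos\theta\le\theta^2/2$ together with $\sum_{p\le X}(\log p)^2/p\le\frac12\log^2 X+O(\log X)$ gives roughly $(\log 2)^2/4\approx 0.12$, which is actually cleaner and sharper than the paper's $2.02$ (the paper reuses the $\Cin$/$\vartheta$ machinery there and its constant is dominated by the error terms, $0.118+0.333+1.56$).
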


\begin{proof}
By partial summation we have
\[
\sum_{p\leq X} f(p) = \int_{2}^{X} \frac{f(y)}{\log{y}}\dif{y} + \frac{2f(2)}{\log{2}} + \frac{f(X)\left(\vartheta(X)-X\right)}{\log{X}}-\int_{2}^{X} \left(\vartheta(y)-y\right)\frac{\dif{}}{\dif{y}}\frac{f(y)}{\log(y)}\dif{y}
\]
for a differentiable function $f(y)$, see \cite[Equation 4.14]{RosserSchoenfeld}. We will apply this equation on $f(y)\de y^{-1}\left(1-\cos{(h\log{y})}\right)$. Then
\[
\int_{2}^{X} \frac{f(y)}{\log{y}}\dif{y} = \Cin(h\log{X}) - \Cin(h\log{2}),
\]
where
\[
\Cin(z) = \int_0^z \frac{1-\cos{t}}{t}\dif{t}
\]
is one of the cosine integrals. Since $\Cin(z)$ is a positive function for $z\in\R^+$, it follows that $0<\Cin{(\log{2})}-\Cin(h\log{2})\leq \Cin{(\log{2})}$. Integration by parts gives
\[
\left|\Cin{(h\log{X})} - \Cin{\log{2}} - \log{\left(h\log{X}\right)}\right| \leq \left|\log{\log{2}}\right| +2+\frac{1}{\log{2}}.
\]
This implies
\[
\left|\log{\left(h\log{X}\right)} - \int_{2}^{X} \frac{f(y)}{\log{y}}\dif{y}\right| \leq 3.927.
\]
If $h\log{X}<\log{2}$, then
\[
\left|\int_{2}^{X} \frac{f(y)}{\log{y}}\dif{y}\right| \leq \Cin(\log{2})<0.118.
\]

By \cite[Equations 3.15 and 3.16]{RosserSchoenfeld} and numerical verification up to $X\leq41$, we have $\left|\vartheta(X)-X\right| \leq 3X/\left(2\log{X}\right)$ for all $X\geq 2$. This implies
\begin{gather*}
\left|\int_2^X \left(\vartheta(y)-y\right)\frac{\dif{}}{\dif{y}}\frac{f(y)}{\log(y)}\dif{y}\right| \leq \frac{3}{2}\int_{2}^{\infty} \frac{2+3\log{y}}{y\log^3{y}}\dif{y} < 9.62, \\
\left|\frac{f(X)\left(\vartheta(X)-X\right)}{\log{X}}\right| \leq \frac{3}{\log^2{X}}.
\end{gather*}
We also have
\[
\left|\frac{2f(2)}{\log{2}}\right| \leq \frac{1-\cos{\left(\log{2}\right)}}{\log{2}} < 0.333.
\]
In the case $h\log{X}<\log{2}$ we further have
\[
\left|\int_2^X \left(\vartheta(y)-y\right)\frac{\dif{}}{\dif{y}}\frac{f(y)}{\log(y)}\dif{y}\right| \leq \frac{3}{2}\int_{2}^{\infty} \frac{0.6737+0.2308\log{y}}{y\log^3{y}}\dif{y} < 1.56.
\]
All bounds give the final estimates from the lemma.
\end{proof}

The next proposition may be regarded as an uniform version of Lemma \ref{lem:primes}.

\begin{proposition}
\label{prop:sum}
Let $X\geq 2$, $0\leq h\leq 1$ and $0<a\leq7/10$. Then
\begin{gather*}
\left|\log{\left(a+h\log{X}\right)} - \sum_{p\leq X} \frac{1-\cos{\left(h\log{p}\right)}}{p}\right| \leq A, \\
\left|\log^2{\left(a+h\log{X}\right)} - \left(\sum_{p\leq X} \frac{1-\cos{\left(h\log{p}\right)}}{p}\right)^2\right| \leq 2A\left|\log{\left(a+h\log{X}\right)}\right|+A^2,
\end{gather*}
where
\[
A(a,X)\de \left|\log{a}\right| + 13.88 + \log{\left(1+\frac{a}{\log{2}}\right)} + \frac{3}{\log^2{X}}.
\]
\end{proposition}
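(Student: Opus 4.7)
The plan is to reduce the claim to Lemma \ref{lem:primes} by splitting on whether $h\log X$ is comparable to $\log 2$ or not, and to deduce the second inequality from the first by the identity $x^2-y^2=(x-y)(x+y)$.

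Set $S\de \sum_{p\leq X}(1-\cos(h\log p))/p$ and $\ell\de \log(a+h\log X)$. First I would treat the range $h\log X\geq \log 2$. Here Lemma \ref{lem:primes} gives $|\log(h\log X)-S|\leq 13.88+3/\log^2 X$, and I would write
\[
\ell=\log(h\log X)+\log\!\left(1+\frac{a}{h\log X}\right),
\]
where the correction is non-negative and bounded above by $\log(1+a/\log 2)$ since $h\log X\geq\log 2$. The triangle inequality then yields $|\ell-S|\leq 13.88+\log(1+a/\log 2)+3/\log^2 X\leq A$, using that $|\log a|\geq 0$. In the complementary range $0\leq h\log X<\log 2$, the second half of Lemma \ref{lem:primes} gives $S\leq 2.02+3/\log^2 X$, and I would bound $\ell$ directly: since $a\leq a+h\log X<a+\log 2\leq 7/10+\log 2<1.4$, one has $|\ell|\leq\max(|\log a|,\log(7/10+\log 2))\leq|\log a|+0.34$. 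Combining, $|\ell-S|\leq|\log a|+2.36+3/\log^2 X$, which is again $\leq A$ because $13.88+\log(1+a/\log 2)$ absorbs the constant $2.36$ comfortably.

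The second inequality follows by writing
\[
\bigl|\ell^2-S^2\bigr|=|\ell-S|\,|\ell+S|\leq|\ell-S|\bigl(2|\ell|+|S-\ell|\bigr)\leq 2A|\ell|+A^2,
\]
which is exactly the stated bound. I would remark that the only place one uses the hypothesis $a\leq 7/10$ is to keep the trivial bound on $\ell$ in the second case under control; any fixed absolute upper bound on $a$ would work after adjusting constants.

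The only mildly delicate point is the bookkeeping of constants in the second case: one needs to verify that the apparently missing term $\log(1+a/\log 2)$ is harmless there, which it is because the $13.88$ already dominates the $2.02+0.34$ produced by the trivial estimates. Everything else is a direct application of Lemma \ref{lem:primes} and the triangle inequality, so I expect no substantive obstacle.
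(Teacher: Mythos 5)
Your proposal is correct and follows essentially the same route as the paper: split on whether $h\log X$ exceeds $\log 2$, apply the corresponding half of Lemma \ref{lem:primes}, absorb the shift by $a$ via $\log(1+a/\log 2)$ in the first case and the trivial bound $|\log(a+h\log X)|\leq\max\{|\log a|,|\log(a+\log 2)|\}$ in the second, then deduce the squared bound from $|\ell^2-S^2|=|\ell-S|\,|\ell+S|$. The constant bookkeeping in your second case is sound, so nothing further is needed.
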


\begin{proof}
Observe that
\[
\left|\log{\left(a+h\log{x}\right)} - \log{\left(h\log{x}\right)}\right| \leq \log{\left(1+\frac{a}{\log{2}}\right)}
\]
if $h\log{x}\geq\log{2}$, and
\[
\left|\log{\left(a+h\log{x}\right)}\right| \leq \max\left\{\left|\log{a}\right|,\left|\log{\left(a+\log{2}\right)}\right|\right\}=\left|\log{a}\right|
\]
if $h\log{x}\leq\log{2}$. Now the first bound follows from Lemma \ref{lem:primes}. The second bound easily follows from the first bound.
\end{proof}

\subsection{Proof of Theorem \ref{thm:difference}}

Define
\begin{gather*}
    Q_{k}(t)\de S(t) + \frac{1}{\pi}\sum_{p\leq T^{\frac{\varepsilon}{k}}}\frac{\sin{(t\log{p})}}{\sqrt{p}} = S(t) - \frac{1}{\pi}\sum_{p\leq T^{\frac{\varepsilon}{k}}}\Im\left\{\frac{p^{-\frac{1}{2}}}{p^{\ie t}}\right\}, \\
    P_{k}(t) \de \frac{1}{\pi}\sum_{p\leq T^{\frac{\varepsilon}{k}}}\Im\left\{\frac{p^{-\frac{1}{2}}\left(p^{-\ie h}-1\right)}{p^{\ie t}}\right\}=Q_{k}(t)-S(t)+S(t+h)-Q_{k}(t+h).
\end{gather*}
Take $a_p\de\pi^{-1}p^{-1/2}\left(p^{-\ie h}-1\right)$. Then $\left|a_p\right|^2=2\pi^{-2}p^{-1}\left(1-\cos{\left(h\log{p}\right)}\right)$, $p\left|a_p\right|^2\leq \left(2/\pi\right)^2$ and $\left|a_p\right|^4\leq \left(2/\pi\right)^4p^{-2}$.

Let $X=T^{\varepsilon}$ and $a=\alpha\varepsilon$. Inequality \eqref{eq:main} and Proposition \ref{prop:sum} give
\[
\left|\int_{T}^{2T}P_1^2(t)\dif{t} - \frac{1}{\pi^2}T\log{\left(\alpha+h\log{T}\right)}\right|
\leq D_1T,
\]
where
\begin{equation}
\label{eq:d1}
D_1 \de \frac{1}{\pi^2}\left(\left|\log{\varepsilon}\right|+A\left(\alpha\varepsilon,T_{1}^{\varepsilon}\right)+\frac{4C_1(1)}{T_{1}^{1-\varepsilon}}\right).
\end{equation}
Let $X=T^{\varepsilon/2}$ and $a=\alpha\varepsilon/2$. Similarly, by \eqref{eq:main2} and Proposition \ref{prop:sum} we also have
\[
\left|\int_{T}^{2T}P_2^4(t)\dif{t} - \frac{3}{\pi^4}T\log^2{\left(\alpha+h\log{T}\right)}\right| \leq D_2T\log{\left(\alpha+h\log{T}\right)},
\]
where
\begin{multline}
\label{eq:d2}
D_2 \de \frac{6}{\pi^4}\left(A\left(\frac{\alpha\varepsilon}{2},T_1^{\frac{\varepsilon}{2}}\right)+\left|\log{\frac{\varepsilon}{2}}\right|\right) \\
+\frac{3}{\pi^4\log\left(\alpha+h\log{T}\right)}\left(A\left(\frac{\alpha\varepsilon}{2},T_1^{\frac{\varepsilon}{2}}\right)^2+\frac{3\pi^4}{160}+\frac{16C_1(2)}{3T_1^{1-\varepsilon}}\right).
\end{multline}

Define $\Delta{S}(t)\de S(t+h)-S(t)$ and $\Delta{Q_k}(t)\de Q_k(t+h)-Q_k(t)$. Because $\Delta{S}(t)=P_k(t)+\Delta{Q_k}(t)$, we have by the Cauchy--Schwarz inequality
\begin{flalign*}
\left|\int_{T}^{2T}\left|\Delta{S}(t)\right|^2-P_1^2(t)\dif{t}\right| &\leq \int_{T}^{2T} \left|\Delta{Q_1}(t)\right|^2\dif{t} \\
&+2\left(\int_{T}^{2T}P_1^2(t)\dif{t}\right)^{\frac{1}{2}}\left(\int_{T}^{2T} \left|\Delta{Q_1}(t)\right|^2\dif{t}\right)^{\frac{1}{2}}.
\end{flalign*}
Similarly, by H\"{o}lder's inequality we also obtain
\begin{flalign*}
\left|\int_{T}^{2T}\left|\Delta{S}(t)\right|^4-P_2^4(t)\dif{t}\right| &\leq \int_{T}^{2T} \left|\Delta{Q_2}(t)\right|^4\dif{t} \\
&+ 4\left(\int_{T}^{2T}P_2^4(t)\dif{t}\right)^{\frac{3}{4}}\left(\int_{T}^{2T} \left|\Delta{Q_2}(t)\right|^4\dif{t}\right)^{\frac{1}{4}} \\
&+ 6\left(\int_{T}^{2T}P_2^4(t)\dif{t}\right)^{\frac{1}{2}}\left(\int_{T}^{2T} \left|\Delta{Q_2}(t)\right|^4\dif{t}\right)^{\frac{1}{2}} \\
&+ 4\left(\int_{T}^{2T}P_2^4(t)\dif{t}\right)^{\frac{1}{4}}\left(\int_{T}^{2T} \left|\Delta{Q_2}(t)\right|^4\dif{t}\right)^{\frac{3}{4}}.
\end{flalign*}
Inequality \eqref{eq:corHolder} guarantees, together with Theorem \ref{thm:mainForS} and Corollary \ref{cor:mainForS}, that we have
\[
\int_{T}^{2T} \left|\Delta{Q_1}(t)\right|^2\dif{t} \leq E_2T, \quad
\int_{T}^{2T} \left|\Delta{Q_2}(t)\right|^4\dif{t} \leq F_2T.
\]
Because
\[
\int_{T}^{2T}P_1^2(t)\dif{t} \leq E_1T\log{\left(\alpha+h\log{T}\right)}, \quad \int_{T}^{2T}P_2^4(t)\dif{t} \leq F_1T\log^2{\left(\alpha+h\log{T}\right)},
\]
the proof now easily follows.

\section{The density of zeroes of a given multiplicity and the proof of Theorem \ref{thm:multMain}}\label{densityproof}

Theorem \ref{thm:mainForS}, together with Lemma \ref{lem:tsang2}, enables us to prove the following upper bound for the density of the nontrivial zeroes $\rho$ of given multiplicity $j\geq 1$ with $T<\Im\left\{\rho\right\}\leq 2T$, where $T$ is sufficiently large.

\begin{theorem}
\label{thm:mult}
There exists $T_0>0$ such that
\begin{equation*}
\label{table}
\frac{N_j(2T)-N_j(T)}{N(2T)-N(T)} \leq 1.01395\cdot e^{-6.459\cdot10^{-7}j}
\end{equation*}
for every $j\geq1$ and $T\geq T_0$.
\end{theorem}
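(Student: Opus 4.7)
The aim is to bound $(N_j(2T)-N_j(T))/(N(2T)-N(T))$. Since $N_j(T)\leq N_{\geq j}(T)$, the number of distinct zeros of $\zeta(s)$ with multiplicity at least $j$, it suffices to bound $N_{\geq j}(2T)-N_{\geq j}(T)$. My plan is a $2k$-th moment argument on $S(t+h)-S(t)$ with $h\asymp 1/\log{T}$, where the exponent $2k$ is chosen as a function of $j$ to produce the exponential decay.

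To convert $N_{\geq j}(2T)-N_{\geq j}(T)$ into a moment of $N(t+h)-N(t)$, observe that at a zero $\rho_{0}$ of multiplicity $m_{0}$ the function $S(t)$ jumps by $m_{0}$, so $N(t+h)-N(t)\geq m_{0}$ whenever $t\in[\gamma_{0}-h,\gamma_{0})$. Writing $N^{*}(t,t+h)$ for the number of \emph{distinct} zeros of multiplicity $\geq j$ in $(t,t+h]$, I would check the pointwise inequality $(N(t+h)-N(t))^{2k}\geq j^{2k}N^{*}(t,t+h)$: it is trivial when $N^{*}=0$, and when $N^{*}\geq 1$ it follows from $N(t+h)-N(t)\geq jN^{*}$ together with $N^{*}\leq(N^{*})^{2k}$. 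Integrating over $[T,2T]$ and using $\int_{T}^{2T}N^{*}(t,t+h)\,\dif{t}=h(N_{\geq j}(2T)-N_{\geq j}(T))+O(h^{2}\log{T})$ yields
\[
j^{2k}h\bigl(N_{\geq j}(2T)-N_{\geq j}(T)\bigr)\leq\int_{T}^{2T}\bigl(N(t+h)-N(t)\bigr)^{2k}\,\dif{t}+O\bigl(j^{2k}\bigr).
\]
By \eqref{gamba}, for $h\asymp 1/\log{T}$ the smooth part of $N(t+h)-N(t)$ is $O(1)$, so $|N(t+h)-N(t)|^{2k}\ll|S(t+h)-S(t)|^{2k}+1$ uniformly on $[T,2T]$, reducing the task to bounding $\int_{T}^{2T}|S(t+h)-S(t)|^{2k}\,\dif{t}$.

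To control that moment I would follow the decomposition of Section \ref{stumps}: write $S(t+h)-S(t)=P_{k}(t)+\bigl(Q_{k}(t)-Q_{k}(t+h)\bigr)$, where $Q_{k}(t)=S(t)+\frac{1}{\pi}\sum_{p\leq T^{3\varepsilon/k}}\sin{(t\log{p})}/\sqrt{p}$ is handled by Theorem \ref{thm:mainForS} and Corollary \ref{cor:mainForS}, and $P_{k}(t)$ is the resulting prime Dirichlet polynomial, whose moment is controlled by Lemma \ref{lem:tsang2} with $a_{p}=\pi^{-1}p^{-1/2}(p^{-\ie h}-1)$. Assembling via \eqref{eq:corHolder} and using Corollary \ref{cor:mainValues1}, which yields $C(\varepsilon,k,x_{0})\leq(\omega_{0}k)^{2k}$ with $\omega_{0}=e^{12.8471}$, I expect an explicit bound of the shape $\int_{T}^{2T}|S(t+h)-S(t)|^{2k}\,\dif{t}\leq(Bk)^{2k}T$ for some explicit $B$ dominated by $\omega_{0}$.

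Combining with $N(2T)-N(T)\sim(T\log{T})/(2\pi)$ gives $(N_{\geq j}(2T)-N_{\geq j}(T))/(N(2T)-N(T))\leq(B'k/j)^{2k}$ plus negligible terms. Optimizing by $k=\lfloor j/(B'e)\rfloor$ produces decay $e^{-2j/(B'e)}$; aligning this with the target exponent $6.459\cdot 10^{-7}$ suggests $B'$ of order $e^{13.9}$. For small $j$ the trivial bound $N_{j}\leq N$ together with the slack $1.01395>1$ yields the claim. The main obstacle will be the numerical bookkeeping: because $\omega_{0}$ is astronomical, every loss---in the Riemann--von Mangoldt smoothing, in the $P_{k}$--$Q_{k}$ splitting, or in the rounding $k\in\N$---is amplified in the final exponent, so sharp choices of $\varepsilon$ and $x_{0}$ are needed. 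A secondary subtlety is matching the moment regime with the trivial one, verifying that $1.01395\cdot e^{-6.459\cdot 10^{-7}j}$ indeed dominates uniformly for every $j\geq 1$.
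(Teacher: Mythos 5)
Your counting step is genuinely different from the paper's. The paper covers the ordinates of the multiplicity-$\geq j$ zeroes by disjoint intervals of length $h$, sorts these into classes $\mathscr{E}_n$ according to how many such ordinates they contain, and then applies a Chebyshev bound on the level sets $D(\lambda)=\{t\colon|S(t+2h)-S(t)|\geq\lambda\}$ (Lemmas \ref{lem:lbS} and \ref{lem:D}); you instead integrate the pointwise inequality $(N(t+h)-N(t))^{2k}\geq j^{2k}N^{*}(t,t+h)$ directly, which is a clean shortcut. Both routes rest on the same analytic engine, namely the bound $\mathcal{J}_{2k}(T,h)\leq(3\omega_0k)^{2k}T$ of Lemma \ref{lem:gbJ} (coming from Theorem \ref{thm:mainForS}, Corollary \ref{cor:mainValues1} and Lemma \ref{lem:tsang2}), and the same optimization $k\approx j/(3e\omega_0)$, which is where $6.459\cdot10^{-7}=2/(3e\omega_0)$ comes from; note the relevant constant is $3\omega_0$, not ``$B$ dominated by $\omega_0$'' --- the factor $3$ from the three-way splitting in \eqref{eq:corHolder} enters the final exponent.

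There is, however, one step that fails as written: the claim that $|N(t+h)-N(t)|^{2k}\ll|S(t+h)-S(t)|^{2k}+1$ \emph{uniformly}. By \eqref{shifts} one has $N(t+h)-N(t)=\frac{h\log T}{2\pi}+S(t+h)-S(t)+o(1)$, and the crude bound $(a+b)^{2k}\leq2^{2k-1}(a^{2k}+b^{2k})$ turns $(3\omega_0k)^{2k}$ into $(6\omega_0k)^{2k}$; since $k\asymp j$, this $4^{k}$ is not absorbed into the prefactor but \emph{halves} the decay constant, yielding $e^{-j/(3e\omega_0)}$ instead of $e^{-2j/(3e\omega_0)}$. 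You need a weighted inequality such as $(a+b)^{2k}\leq(1+\theta)^{2k}a^{2k}+(1+\theta^{-1})^{2k}b^{2k}$ with $\theta\asymp1/k$, so the main term is inflated only by $O(1)$ while the secondary term $\left((2k+1)h\log T/(2\pi)\right)^{2k}$ stays below $(3\omega_0k)^{2k}$. Relatedly, the ``$+1$'' must really be $+(h\log T/(2\pi))^{2k}$, and to push the final prefactor $2\pi(e+1)e^{2}/(h\log T)$ down to $1.01395$ you are forced to take $h\log T$ equal to a large constant (the paper in effect takes $h\log T$ of order $\kappa^{-1}$), which is precisely the regime where the naive $2^{2k}$ loss is fatal. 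Two further loose ends: for uniformity in $j$ you need the paper's observation that $N_j(2T)-N_j(T)=0$ once $j\geq\frac{1}{2}\log T$ (from $|S(T)|\leq\frac{1}{5}\log T$), since Lemma \ref{lem:gbJ} restricts $k\ll\log T$; and the gap between the range where the trivial bound $N_j\leq N/j$ beats $1.01395e^{-\kappa j}$ and the range $j\geq3e\omega_0$ where $k\geq1$ must be checked to be empty (it is, but only just). All of this is repairable, but the proposal as it stands does not reach the stated constants.
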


Fujii \cite{FujiiDistr} applied general bounds for $\mathcal{J}_{2k}(T,h)$ to obtain that the number of zeroes $\rho$ with $0<\Im\left\{\rho\right\}\leq T$ and multiplicity at least $j>j_0$ for some $j_0\geq1$ is at most $e^{-A\sqrt{j}}N(T)$ for some positive constant $A$. Later, he improved this result to $e^{-Aj}N(T)$, see \cite[Theorem 3]{Fujii}. Karatsuba and Korol\"{e}v \cite[Theorem 7]{KaratKor} proved that
\[
\frac{N_j(T+H)-N_j(T)}{N(T+H)-N(T)} \leq e^{7.2}\cdot \exp{\left(-\frac{2\varepsilon\sqrt{\varepsilon}}{e\sqrt{10e^{37}\pi^{-2}}}j\right)},
\]
where $0<\varepsilon<10^{-3}$, $H=T^{\frac{27}{82}+\varepsilon}$, $j\geq1$, and $T$ is sufficiently large. This estimate gives a bound as in Theorems \ref{thm:multMain} and \ref{thm:mult}, but with worst constants. Our improvement upon the constant in the exponent comes from the second part of Corollary \ref{cor:mainValues1}, while a suitable choice for $h$ in $\mathcal{J}_{2k}(T,h)$ allows to reduce the factor $e^{7.2}$ significantly. Otherwise our proof follows the same ideas outlined in \cite{KaratKor}.

\begin{lemma}
\label{lem:gbJ}
Let $\eta\geq\left(5\omega_0\right)^{-1}$ with $\omega_0$ as in Corollary \ref{cor:mainValues1}, and define
\begin{equation}
\label{eq:h}
h\left(T,\eta\right) \de \frac{2\pi}{\eta}\left(\log{\frac{T}{2\pi}}\right)^{-1}.
\end{equation}
Then there exists $T_0>0$ such that $\mathcal{J}_{2k}\left(T,2h\left(T,\eta\right)\right)\leq \left(3\omega_0k\right)^{2k}T$ for
$1\leq k \leq \left(176\cdot10^4\right)^{-1}\log{T}$ and $T\geq T_0$.
\end{lemma}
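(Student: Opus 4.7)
The plan is to decompose $S(t+2h) - S(t)$ into a prime-sum contribution plus an approximation remainder, then use Minkowski's inequality (rather than the cruder bound $|a+b|^{2k} \leq 2^{2k-1}(|a|^{2k}+|b|^{2k})$, which would blow up by a factor too large). Specifically, write $S(t+2h) - S(t) = \Delta Q_k(t) + P_k(t)$, where
\[
Q_k(t) \de S(t) + \frac{1}{\pi}\sum_{p\leq T^{\varepsilon/k}}\frac{\sin(t\log p)}{\sqrt{p}}, \qquad \Delta Q_k(t) \de Q_k(t+2h) - Q_k(t),
\]
and $P_k(t) = -\pi^{-1}\sum_{p\leq T^{\varepsilon/k}} p^{-1/2}[\sin((t+2h)\log p) - \sin(t\log p)]$, exactly as in the proof of Theorem \ref{thm:difference} but with shift $2h$ in place of $h$. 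Throughout take $\varepsilon = 1/88$ and $x_0 = e^{2\cdot 10^4}$, so that Corollary \ref{cor:mainValues1} applies whenever $T \geq x_0^{k/\varepsilon}$; the hypothesis $k \leq (176\cdot 10^4)^{-1}\log T$ is precisely what guarantees this.

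By Minkowski, $\|\Delta S\|_{L^{2k}[T,2T]} \leq \|P_k\|_{2k} + \|Q_k(\cdot+2h)\|_{2k} + \|Q_k\|_{2k}$. Theorem \ref{thm:mainForS} and Corollary \ref{cor:mainForS} (applied with $\varepsilon/3$) bound the last two norms by $(CT)^{1/(2k)}$ and $(C'T)^{1/(2k)}$ respectively, and the uniform estimate $C, C' \leq (\omega_0 k)^{2k}$ from Corollary \ref{cor:mainValues1} shows each is at most $\omega_0 k\, T^{1/(2k)}$. So this pair already contributes $2\omega_0 k\, T^{1/(2k)}$ to the right-hand side, which upon raising to the $2k$-th power is of the right shape $(2\omega_0 k)^{2k}T$.

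It remains to show $\|P_k\|_{2k} \leq \omega_0 k\, T^{1/(2k)}$, which is where the restriction $\eta \geq (5\omega_0)^{-1}$ enters. Applying Lemma \ref{lem:tsang2} with $a_p = \pi^{-1}p^{-1/2}(p^{2ih}-1)$, so that $|a_p|^2 = 2\pi^{-2}p^{-1}(1-\cos(2h\log p))$ and $p|a_p|^2 \leq 4\pi^{-2}$, gives
\[
\int_T^{2T}|P_k(t)|^{2k}\,dt \leq \frac{T}{4^k}\binom{2k}{k}\sum_{\mathbf{p}}|a_{\mathbf{p}}|^2|\mathbf{p}| + C_1(k)\Bigl(\sum_{p\leq T^{\varepsilon/k}} p|a_p|^2\Bigr)^k.
\]
The key point is that the choice $h = 2\pi(\eta\log(T/2\pi))^{-1}$ with $\eta \geq (5\omega_0)^{-1}$ makes $2h\log(T^{\varepsilon/k})$ uniformly $O(\omega_0/k)$, so by Lemma \ref{lem:primes} the inner sum $\sum_p|a_p|^2$ is bounded by an absolute constant. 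Hence the main term is $O((ck)^k)\cdot T$ via $(2k)!/(4^k k!) \leq (k/2)^k$, and the error term, using $\sum p|a_p|^2 = O(T^\varepsilon/\log T)$ and \eqref{eq:gbC1}, gives $O(k^k T^\varepsilon(4/(\varepsilon\pi^2\log T))^k)$. Both are dominated by $(\omega_0 k)^{2k}T$ under our constraint on $k$, so $\|P_k\|_{2k} \leq \omega_0 k\, T^{1/(2k)}$.

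Combining, $\|\Delta S\|_{L^{2k}[T,2T]} \leq 3\omega_0 k\,T^{1/(2k)}$, which yields $\mathcal{J}_{2k}(T,2h) \leq (3\omega_0 k)^{2k}T$ as desired. The main obstacle is the tight tracking of constants in the $P_k$ estimate: one must verify that the error term from Lemma \ref{lem:tsang2}, which grows like $k^k T^\varepsilon$, is genuinely absorbed by $(\omega_0 k)^{2k}T$ throughout the admissible range $k \leq (176\cdot 10^4)^{-1}\log T$, and that the choice $\eta \geq (5\omega_0)^{-1}$ is compatible with keeping the prime-sum bound $\sum |a_p|^2$ uniformly controlled.
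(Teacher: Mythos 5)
Your proposal is correct and follows essentially the same route as the paper: the same decomposition $S(t+2h)-S(t)=P_k(t)+\Delta Q_k(t)$, with the $\Delta Q_k$ contribution controlled by Theorem \ref{thm:mainForS}, Corollary \ref{cor:mainForS} and Corollary \ref{cor:mainValues1}, and the $P_k$ contribution by Lemma \ref{lem:tsang2}; your Minkowski step is interchangeable with the paper's use of \eqref{eq:corHolder}, and your appeal to Lemma \ref{lem:primes} to get $\sum_p|a_p|^2=O(1)$ is a harmless variant of the paper's direct bound $|a_p|^2\leq(2h\log p/\pi)^2p^{-1}$, since the target $(\omega_0 k)^{2k}$ absorbs either. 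The only slip is notational: with your convention $\varepsilon=1/88$ you should take the prime sum in $Q_k$ over $p\leq T^{3\varepsilon/k}$ (equivalently, set $\varepsilon=3/88$ and sum over $p\leq T^{\varepsilon/k}$ as in Theorem \ref{thm:difference}), so that the constants invoked are exactly $C(1/88,k,x_0)$ and $C'(1/88,k,x_0)$ from Corollary \ref{cor:mainValues1}.
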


\begin{proof}
Take $T$ sufficiently large such that $h=h\left(T,\eta\right)\in[0,1/2]$. Note that this is independent of $\eta$ since this quantity is bounded below by a fixed constant. Let $\varepsilon=3/88$. Using the same notation as in the proof of Theorem \ref{thm:difference}, from Theorem \ref{thm:mainForS}, Corollary \ref{cor:mainForS}, and Corollary \ref{cor:mainValues1} we obtain
\begin{flalign*}
\mathcal{J}_{2k}\left(T,2h\right) &\leq \int_{T}^{2T} \left(\left|P_k(t)\right|+\left|Q_k(t+2h)\right|+\left|Q_k(t)\right|\right)^{2k} \dif{t} \\
&\leq \frac{2}{3}\left(3\omega_0k\right)^{2k}T + 3^{2k-1}\int_{T}^{2T}\Im\left\{\sum_{p\leq T^{\frac{\varepsilon}{k}}}\frac{a_p}{p^{\ie t}}\right\}^{2k}\dif{t}
\end{flalign*}
for $1\leq k \leq \left(176\cdot10^4\right)^{-1}\log{T}$. By Lemma \ref{lem:tsang2} and the inequality \eqref{eq:gbC1} we also have
\[
\int_{T}^{2T}\Im\left\{\sum_{p\leq T^{\frac{\varepsilon}{k}}}\frac{a_p}{p^{\ie t}}\right\}^{2k}\dif{t} \leq k^k\left(\sum_{p\leq T^{\frac{\varepsilon}{k}}}\left|a_p\right|^2\right)^k T + 6\pi m_0k^k\left(\sum_{p\leq T^{\frac{\varepsilon}{k}}}p\left|a_p\right|^2\right)^k,
\]
where we used the simple bound
\[
\frac{k!}{4^k}\binom{2k}{k} \leq k! \leq k^k.
\]
Because
\[
\left|a_p\right|^2 = \left(\frac{2}{\pi}\right)^2 \frac{\sin^2\left(\frac{h\log{p}}{2}\right)}{p} \leq \left(\frac{h}{\pi}\right)^2 \frac{\log^2{p}}{p},
\]
we obtain
\begin{gather*}
\sum_{p\leq T^{\frac{\varepsilon}{k}}}p\left|a_p\right|^2 \leq \left(\frac{h\varepsilon}{\pi k}\log{T}\right)^{2} T^{\frac{\varepsilon}{k}}, \\
\sum_{p\leq T^{\frac{\varepsilon}{k}}}\left|a_p\right|^2 \leq \left(\frac{h}{\pi}\right)^2\frac{\varepsilon}{k}\log{T}\sum_{p\leq T^{\frac{\varepsilon}{k}}}\frac{\log{p}}{p} \leq \left(\frac{h\varepsilon}{\pi k}\log{T}\right)^{2},
\end{gather*}
by (\ref{desk}).
This implies
\[
3^{2k-1}\int_{T}^{2T}\Im\left\{\sum_{p\leq T^{\frac{\varepsilon}{k}}}\frac{a_p}{p^{\ie t}}\right\}^{2k}\dif{t} \leq \left(\frac{3h\varepsilon}{\pi\sqrt{k}}\log{T}\right)^{2k}T \leq \frac{1}{3}\left(3\omega_0k\right)^{2k}T
\]
for sufficiently large $T$. This concludes the proof of the lemma.
\end{proof}

In what follows, we are going to use $\left|S(T)\right|\leq \frac{1}{5}\log{T}$ and $N(2T)-N(T)\geq \frac{T}{2\pi}\log{\frac{T}{2\pi}}$, both valid for sufficiently large $T$, see, e.g., Theorem 1 and Corollary 1 in \cite{Tru14}. Also, denote by $m\left(\cdot\right)$ the measure of a subset in $\R$.

\begin{lemma}
\label{lem:D}
For $\lambda\geq0$ define the set
\[
D(\lambda)\de\left\{t\in[T,2T]\colon \left|S(t+2h\left(T,\eta\right))-S(t)\right|\geq\lambda\right\},
\]
where $h\left(T,\eta\right)$ is defined by \eqref{eq:h}. Then there exists $T_0>0$ such that
\[
m\left(D(\lambda)\right) \leq T\exp{\left(4-\frac{2}{3e\omega_0}\lambda\right)}
\]
for $T\geq T_0$, with $\omega_0$ as in Corollary \ref{cor:mainValues1}.
\end{lemma}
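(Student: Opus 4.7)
The plan is to combine Chebyshev's inequality with the moment bound from Lemma \ref{lem:gbJ} and optimize the exponent $2k$. By Chebyshev's inequality and Lemma \ref{lem:gbJ},
\[
m\left(D(\lambda)\right) \;\leq\; \lambda^{-2k}\,\mathcal{J}_{2k}\!\left(T,2h\left(T,\eta\right)\right) \;\leq\; T\left(\frac{3\omega_{0}k}{\lambda}\right)^{\!2k}
\]
holds for every positive integer $k$ in the admissible range $1\le k\le\left(176\cdot10^{4}\right)^{-1}\log{T}$, provided $T$ is sufficiently large. The continuous minimizer of the right-hand side in $k$ is $k^{\ast}\de\lambda/(3e\omega_{0})$, where the exponential $\exp\!\left(-2\lambda/(3e\omega_{0})\right)$ emerges. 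The program is to replace $k^\ast$ by a nearby integer and absorb the resulting loss (plus the shortfall in the small-$\lambda$ regime) into the factor $e^{4}$ that appears in the statement.

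First I would dispose of the trivial regime $\lambda\le 3e\omega_{0}$ by the bound $m(D(\lambda))\le T$: here $4-2\lambda/(3e\omega_{0})\ge 2$, so $T\exp\!\left(4-2\lambda/(3e\omega_{0})\right)\ge e^{2}T\ge T$, and there is nothing to prove. For $\lambda>3e\omega_{0}$, I would set $k\de\left\lfloor\lambda/(3e\omega_{0})\right\rfloor\ge 1$, so that $3\omega_{0}k/\lambda\le 1/e$ and therefore
\[
\left(\frac{3\omega_{0}k}{\lambda}\right)^{\!2k} \;\le\; e^{-2k} \;<\; \exp\!\left(2-\frac{2\lambda}{3e\omega_{0}}\right) \;\le\; \exp\!\left(4-\frac{2\lambda}{3e\omega_{0}}\right),
\]
using $k>\lambda/(3e\omega_{0})-1$ in the middle step. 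This yields the claimed bound once I verify that this choice of $k$ is admissible in Lemma \ref{lem:gbJ}.

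The remaining point, and the only place requiring care, is the upper constraint $k\le(176\cdot10^{4})^{-1}\log T$. For this I would use the explicit bound $|S(t)|\le\tfrac{1}{5}\log t$ for $t\ge T_{0}$ large (quoted just before Lemma \ref{lem:D} from \cite{Tru14}), which gives $|S(t+2h\left(T,\eta\right))-S(t)|\le\tfrac{2}{5}\log T$ on $[T,2T]$; hence $D(\lambda)=\emptyset$ once $\lambda>\tfrac{2}{5}\log T$, rendering the lemma trivial in that regime. For $\lambda\le\tfrac{2}{5}\log T$ the choice $k=\lfloor\lambda/(3e\omega_{0})\rfloor$ satisfies $k\le\lambda/(3e\omega_{0})\le\tfrac{2}{15e\omega_{0}}\log T$, and with $\omega_{0}=e^{12.8471}$ one checks numerically that $\tfrac{2}{15e\omega_{0}}<(176\cdot10^{4})^{-1}$, so $k$ falls within the admissible range for Lemma \ref{lem:gbJ}. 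The main (only) obstacle is thus the numerical verification of this last inequality, which relies on the explicit value of $\omega_{0}$; everything else is a routine optimization of an exponential-polynomial tail.
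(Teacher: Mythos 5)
Your proof is correct and follows essentially the same route as the paper's: Chebyshev's inequality combined with Lemma \ref{lem:gbJ} for $k=\lfloor\lambda/(3e\omega_0)\rfloor$, a trivial bound in the small-$\lambda$ regime, and emptiness of $D(\lambda)$ for $\lambda\gg\log T$ (via $|S(t)|\leq\tfrac{1}{5}\log t$) to keep $k$ within the admissible range. The only harmless imprecision is that $|S(t+2h)-S(t)|$ is really bounded by roughly $\tfrac{2}{5}\log T+\tfrac{2}{5}\log 2$ rather than $\tfrac{2}{5}\log T$; the paper uses the cutoff $\tfrac{1}{2}\log T$ to absorb this, and your numerical margin for the constraint $k\leq(176\cdot10^{4})^{-1}\log T$ easily accommodates the same adjustment.
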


\begin{proof}
Write $h=h\left(T,\eta\right)\in[0,1/2]$. We are going to consider three different cases: $\lambda\geq \frac{1}{2}\log{T}$, $0\leq\lambda\leq6e\omega_0$, and $6e\omega_0<\lambda<\frac{1}{2}\log{T}$. In the first case we have $m\left(D(\lambda)\right)=0$ since
\[
\left|S(t+2h)-S(t)\right| \leq \frac{2}{5}\log{T} + \frac{2}{5}\log{2} + \frac{1}{5T} < \frac{1}{2}\log{T}
\]
for sufficiently large $T$. In the second case we obtain the trivial bound $m\left(D(\lambda)\right)\leq T$. Moving to the third case, define $k\de\lfloor\lambda/\left(3e\omega_0\right)\rfloor$. Such $k$ satisfies the conditions of Lemma \ref{lem:gbJ}, which implies that we have
\begin{flalign*}
m\left(D(\lambda)\right) &\leq \lambda^{-2k}\int_{D(\lambda)}\left(S(t+2h)-S(t)\right)^{2k}\dif{t} \\
&\leq \lambda^{-2k}\mathcal{J}_{2k}\left(T,2h\right)\leq \left(\frac{3\omega_0}{\lambda}k\right)^{2k}T = \left(\frac{3e\omega_0k}{\lambda}\right)^{2k}T\exp{\left(-2k\right)}
\end{flalign*}
for sufficiently large $T$. Because $3e\omega_0k/\lambda\leq 1$ and $-2k\leq 2-2\lambda/\left(3e\omega_0\right)$, the stated inequality from the lemma is also true in this case. The proof is thus complete.
\end{proof}

\begin{lemma}
\label{lem:lbS}
Let $\gamma$ be an ordinate of a nontrivial zero of $\zeta(s)$, such that $$T\leq \gamma-h\left(T,\eta\right)<\gamma\leq2T,$$ where $h\left(T,\eta\right)$ is defined by \eqref{eq:h}, and let the interval $\left(\gamma-h\left(T,\eta\right),\gamma\right]$ contain exactly $\nu$ ordinates of zeroes of multiplicities $j\geq1$. Then the inequality
\[
S\left(t+2h\left(T,\eta\right)\right) - S(t) \geq \nu j - \frac{2}{\eta} - \varepsilon
\]
holds for any $t\in\left(\gamma-2h\left(T,\eta\right),\gamma-h\left(T,\eta\right)\right]$, where $T\geq T_0(\varepsilon)>0$ and $\varepsilon>0$.
\end{lemma}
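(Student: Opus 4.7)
My plan is to translate the claim about $S(t+2h)-S(t)$ into a zero-counting statement via the Riemann--von Mangoldt formula \eqref{gamba}, and then estimate each piece separately. Abbreviate $h = h(T,\eta)$. Applying \eqref{gamba} at $t+2h$ and $t$ and subtracting, the constant $7/8$ cancels and the polynomial-logarithmic terms reorganize (after combining with the $(1+\log 2\pi)/(2\pi)$ linear piece) into $-\tfrac{1}{2\pi}\int_t^{t+2h}\log(u/(2\pi))\,\dif{u}$. Since $R(t)=O(1/t)$, this yields
\[
S(t+2h)-S(t)=\bigl(N(t+2h)-N(t)\bigr)-\frac{1}{2\pi}\int_t^{t+2h}\log\frac{u}{2\pi}\,\dif{u}+O(1/T).
\]

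Next I would bound the zero-count term from below. The assumption $t\in(\gamma-2h,\gamma-h]$ gives $t\le\gamma-h$ and $t+2h>\gamma$, so $(t,t+2h]\supseteq(\gamma-h,\gamma]$. By hypothesis, $(\gamma-h,\gamma]$ contains $\nu$ ordinates each of multiplicity $j$, contributing $\nu j$ to $N$ (which is counted with multiplicity). Hence $N(t+2h)-N(t)\ge\nu j$.

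For the integral term, using $u\le 2T+2h$ together with the explicit value from \eqref{eq:h},
\[
\frac{1}{2\pi}\int_t^{t+2h}\log\frac{u}{2\pi}\,\dif{u}\le\frac{h}{\pi}\log\frac{2T+2h}{2\pi}=\frac{2}{\eta}\cdot\frac{\log(T/(2\pi))+\log(2+h/T)}{\log(T/(2\pi))}.
\]
Since $\log(2+h/T)/\log(T/(2\pi))\to 0$ as $T\to\infty$, this ratio is at most $1+\varepsilon\eta/4$ for $T\ge T_0(\varepsilon)$ large enough, making the left-hand side at most $2/\eta+\varepsilon/2$. The $O(1/T)$ tail is absorbed into the remaining $\varepsilon/2$, and the stated inequality follows.

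There is no real obstacle here; the argument is essentially elementary once the $S$-difference is rewritten through \eqref{gamba}. The only care needed is calibrating $T_0(\varepsilon)$ so that both $\log(2+h/T)/\log(T/(2\pi))$ and the $O(1/T)$ contribution each fit within $\varepsilon/2$, which is immediate since both decay in $T$ while $\eta$ is bounded below by a fixed constant.
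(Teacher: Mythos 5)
Your argument is correct and is essentially the proof that the paper outsources to Karatsuba--Korol\"{e}v's Lemma~16: rewrite $S(t+2h)-S(t)$ via \eqref{gamba} as $N(t+2h)-N(t)$ minus the smooth main term, note that $(\gamma-h,\gamma]\subseteq(t,t+2h]$ forces $N(t+2h)-N(t)\geq\nu j$, and bound the main term by $2/\eta$ up to an $\varepsilon$ absorbed for $T\geq T_0(\varepsilon)$. The only blemish is the harmless slip $\log(2+h/T)$ in place of $\log(2+2h/T)$, which does not affect the limit argument.
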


\begin{proof}
The proof is the same as in \cite[Lemma 16]{KaratKor}, just with $H=T$ and our definition of $h$.
\end{proof}

\subsection{Proof of Theorem \ref{thm:mult}}
Define $\kappa\de2/\left(3e\omega_0\right)$. Let $\delta>1$. We are going to consider three different cases: $j\geq\frac{1}{2}\log{T}$, $0\leq j\leq\kappa^{-1}\log{\delta}$, and $\kappa^{-1}\log{\delta}<j<\frac{1}{2}\log{T}$. In the first case we claim that $N_j(T)=N_j(2T)=0$. For, suppose not, and let $\rho=\beta+\ie\gamma$, $\gamma\leq 2T$, be a zero of multiplicity $j$. Then
\[
j\leq S\left(\gamma+\varepsilon\right)-S\left(\gamma-\varepsilon\right)<\frac{1}{2}\log{T},
\]
which is in contradiction with the first condition. In the second case we obtain
\[
N_j(2T)-N_j(T) \leq \delta\left(N(2T)-N(T)\right)e^{-\kappa j}.
\]
Moving to the third case, let us assume that $N_j(2T)-N_j(T)>0$ since otherwise the stated inequality is trivial. Also $h=h\left(T,\eta\right)\in[0,1/2]$. Let $\gamma_1$ be the largest value among the ordinates of the nontrivial zeroes $\rho$ with $T<\Im\left\{\rho\right\}\leq2T$ and multiplicity at least $j$. Let $\mathcal{E}_1\de\left(\gamma_1-h,\gamma_1\right]$, and denote by $\gamma_2$ the largest value among the ordinates of the nontrivial zeroes $\rho$ with $T<\Im\left\{\rho\right\}\leq\gamma_1-h$ and multiplicity at least $j$, if such value exists, and put $\mathcal{E}_2\de\left(\gamma_2-h,\gamma_2\right]$. We continue this process with quantities $\gamma_3,\gamma_4,\ldots$ and the intervals $\mathcal{E}_3,\mathcal{E}_4,\ldots$ until there are no ordinates of zeroes with the above property on the interval $(T,\gamma_k-h]$ or we obtain an ordinate $\gamma_k\in\left[T,T+h\right)$. Such intervals are pairwise disjoint, have the same length $h$, and their union covers all ordinates of $\rho$ with multiplicity at least $j$. We partition these intervals into classes $\mathscr{E}_1,\mathscr{E}_2,\ldots,\mathscr{E}_N$ by taking into $\mathscr{E}_n$ those intervals containing exactly $n$ ordinates of $\rho$ with multiplicity at least $j$. Then we must have
\[
N_j(2T)-N_j(T) \leq \sum_{n=1}^{N} n\left|\mathscr{E}_n\right|.
\]
Suppose that $\mathcal{E}_k$ belongs to $\mathscr{E}_n$. Taking $\mathcal{E}'_k\de\mathcal{E}_k-h=\left(\gamma_k-2h,\gamma_k-h\right]$, it follows by Lemma \ref{lem:lbS} that $S\left(t+2h\right) - S(t) \geq nj - 2/\eta - \varepsilon$ for any $t\in\mathcal{E}'_k$. In the notation of Lemma \ref{lem:D} this means that $\mathcal{E}'_k\subseteq D\left(nj - 2/\eta - \varepsilon\right)$. Because this is true for every shifted interval which belongs to $\mathscr{E}_n$, and these intervals are pairwise disjoint having the same length $h$, we obtain by Lemma \ref{lem:D} that
\[
h\left|\mathscr{E}_n\right|\leq m\left(D\left(nj - \frac{2}{\eta} - \varepsilon\right)\right) \leq T\exp{\left(4+\frac{2\kappa}{\eta}+\varepsilon\kappa-n\kappa j\right)}.
\]
Because $N(2T)-N(T)\geq T/\left(h\eta\right)$ for sufficiently large $T$, the previous two inequalities imply
\begin{flalign*}
\frac{N_j(2T)-N_j(T)}{N(2T)-N(T)} &\leq \eta\exp{\left(4+\frac{2\kappa}{\eta}+\varepsilon\kappa\right)}\sum_{n=1}^{\infty} ne^{-n\kappa j} = \frac{\eta\exp{\left(4+\frac{2\kappa}{\eta}+\varepsilon\kappa\right)}}{\left(1-e^{-\kappa j}\right)^2}e^{-\kappa j} \\
&< \frac{\eta\exp{\left(4+\frac{2\kappa}{\eta}+\varepsilon\kappa\right)}}{\left(1-1/\delta\right)^2}e^{-\kappa j},
\end{flalign*}
where we used also the condition $\kappa^{-1}\log{\delta}<j$. The choice $\eta=2\kappa$, which minimizes the numerator in the above fraction, satisfies the condition in Lemma \ref{lem:gbJ}. Because the appropriate solution of the equation $2e^{5}\kappa=\delta\left(1-1/\delta\right)^2$ is $\delta\approx1.013943$, and $\kappa>6.459\cdot10^{-7}$, the inequality of Theorem \ref{thm:mult} now easily follows.

\subsection{Proof of Theorem \ref{thm:multMain}}
After dyadic partition and using the bound $j<\log{T}$ proved above, Theorem \ref{thm:mult} implies that
\[
\frac{N_j(T)}{N(T)} \leq \left(1.01395+\frac{N\left(2T_0\right)T^{6.459\cdot10^{-7}}}{N(T)}\right)e^{-6.459\cdot10^{-7} j}
\]
for $T\geq 2T_0$, where $T_0$ is fixed constant from Theorem \ref{thm:mult}. But $N(T)\sim\frac{T}{2\pi}\log{T}$, which means that the fraction in the above parentheses can be made arbitrarily small after taking $T$ sufficiently large. This finishes the proof of Theorem \ref{thm:multMain}.

\section*{Acknowledgements}
We are grateful to Dan Goldston for his helpful comments.


\providecommand{\bysame}{\leavevmode\hbox to3em{\hrulefill}\thinspace}
\providecommand{\MR}{\relax\ifhmode\unskip\space\fi MR }
\providecommand{\MRhref}[2]{%
  \href{http://www.ams.org/mathscinet-getitem?mr=#1}{#2}
}
\providecommand{\href}[2]{#2}

\end{document}